\documentclass[12pt,a4paper,reqno]{amsart}
\usepackage{amsmath}
\allowdisplaybreaks
\usepackage{amsfonts}
\usepackage{amssymb,amsthm,amsfonts,amsthm,latexsym,enumerate,url,cases}
\numberwithin{equation}{section}
\usepackage{relsize}
\usepackage{mathrsfs}
\usepackage{hyperref}
\hypersetup{colorlinks=true,citecolor=blue,linkcolor=blue,urlcolor=blue}
\usepackage{extarrows}
     \addtolength{\textwidth}{3 truecm}
     \addtolength{\textheight}{1 truecm}
     \setlength{\voffset}{-.6 truecm}
     \setlength{\hoffset}{-1.3 truecm}
\usepackage{tikz,xcolor,hyperref}

\definecolor{lime}{HTML}{A6CE39}
\DeclareRobustCommand{\orcidicon}{
    \begin{tikzpicture}
    \draw[lime, fill=lime] (0,0)
    circle [radius=0.16]
    node[white] {{\fontfamily{qag}\selectfont \tiny ID}};
    \draw[white, fill=white] (-0.0625,0.095)
    circle [radius=0.007];
    \end{tikzpicture}
    \hspace{-2mm}
}
\foreach \x in {A, ..., Z}{\expandafter\xdef\csname orcid\x\endcsname{\noexpand\href{https://orcid.org/\csname orcidauthor\x\endcsname}
            {\noexpand\orcidicon}}
}

\def\pmod #1{\ ({\rm{mod}}\ #1)}

\newtheorem{theorem*}{Theorem}
\newtheorem{lemma*}{Lemma}
\theoremstyle{plain}
\newtheorem{Hypothesis}{Hypothesis}
\newtheorem{theorem}{Theorem}
\newtheorem{lemma}[theorem]{Lemma}
\newtheorem{corollary}[theorem]{Corollary}
\newtheorem{proposition}{Proposition}
\newtheorem{conjecture}{Conjecture}
\theoremstyle{definition}
\newtheorem*{acknowledgment}{Acknowledgments}
\newtheorem{remark}{Remark}

\makeatletter
\makeatother

\begin{document}

\title
[{Quantitative results of the Romanov type representation functions}] {Quantitative results of the Romanov type representation functions}

\author
[Yong-Gao Chen and Yuchen Ding] {Yong-Gao Chen\orcidA{} and Yuchen Ding*\orcidB{}}

\address{(Yong-Gao Chen) School of Mathematical Sciences,
Nanjing Normal University, Nanjing 210023, People's Republic of
China} \email{ygchen@njnu.edu.cn}
\address{(Yuchen Ding) School of Mathematical Sciences,  Yangzhou University, Yangzhou 225002, People's Republic of China}
\email{ycding@yzu.edu.cn}
\thanks{*Corresponding author}

\keywords{Representation functions; Primes; the
Landau--Siegel zero; the Bombieri--Vinogradov type theorem; Primes
in tuples; Beatty sequence; Diophantine approximation} \subjclass[2010]{Primary 11A41; Secondary 11A67.}

\begin{abstract} For $\alpha >0$, let $$\mathscr{A}=\{ a_1<a_2<a_3<\cdots\}$$ and $$\mathscr{L}=\{ \ell_1, \ell_2, \ell_3,\cdots\} \quad \text{(not~necessarily~different)}$$ be two sequences of positive
integers with $\mathscr{A}(m)>(\log m)^\alpha $ for infinitely
many positive integers $m$ and $\ell_m<0.9\log\log m$ for sufficiently large integers $m$.
Suppose further that $(\ell_i,a_i)=1$ for all $i$. For any $n$, let $f_{\mathscr{A},\mathscr{L}}(n)$ be the number of the available representations listed below
$$\ell_in=p+a_i \quad \left(1\le i\le \mathscr{A}(n)\right),$$
where $p$ is a prime number.
It is proved that
$$\limsup_{n\to \infty } \frac{f_{\mathscr{A},\mathscr{L}}(n)}{\log\log n}>0,$$
which covers an old result of Erd\H os in 1950 by taking $a_i=2^i$ and $\ell_i=1$. One key ingredient in the argument is a technical lemma established here which illustrates how to pick out the admissible parts of an arbitrarily given set of distinct linear functions. The proof then reduces to the verifications of a hypothesis involving well--distributed sets introduced by Maynard, which of course would be the other key ingredient in the argument.

\end{abstract}
\maketitle

\baselineskip 18pt

\section{Introduction}
In 1849, de Polignac \cite{de1} conjectured that every odd number greater than 3 is the sum of a prime and a power of 2.
 However, he \cite{de2} found that 127 and 959 are two counterexamples soon. In 1934, Romanov \cite{Ro} proved that there
 is a positive proportion of the odd numbers which can be represented by the form $p+2^k~~(p\in \mathbb{P},~k\in \mathbb{N})$,
  where $\mathbb{P}$ and $\mathbb{N}$ stand for the set of primes and natural numbers, respectively.
  Later, the opposite direction was achieved by van der Corput \cite{va}, who illustrated that the odd numbers
  which can not be written as the form $p+2^k~~(p\in \mathbb{P},~k\in \mathbb{N})$ still possess a positive lower density.
  In the same year, Erd\H os \cite{Er1} constructed an arithmetic progression of odd numbers none of whose elements can be
  represented by the sum of a prime and a power of 2. It is easy to find that van der Corput's result follows from the one of Erd\H os.

In 2004, the first author and Sun \cite{CS} consider the quantitative version of the Romanov theorem. They showed that the lower asymptotic density of the odd numbers represented by the form $p+2^k~~(p\in \mathbb{P},~k\in \mathbb{N})$ is larger than $0.0868$. In their paper, they pointed out that Erd\H os' construction leads to the fact that the upper asymptotic density of the odd numbers with those forms is less than $0.49999991$. There are a few improvements of the lower asymptotic density, such as \cite{ES,HL,HS,Lv,Pi}. As far as we know, the best numerical values of upper asymptotic density at present is $0.49095$ due to  Habsieger and Roblot \cite{HL}. In literatures, the best numerical value of the lower asymptotic density is $0.107648$ obtained by Elsholtz and Schlage--Puchta \cite{ES}. However, by a review of Alessandro regarding the paper of Elsholtz and Schlage--Puchta \cite{ES} on MathSciNet, the lower asymptotic density is at least $0.110114$.

Recently, Del Corso et al. \cite{DDD} thought that if there is some absolute constant $c_0$ such that
$$\lim_{x\rightarrow\infty}\frac{\#\{n:n\leqslant x,~ n=p+2^m, ~m\in\mathbb{N},~p\in\mathbb{P}\}}{x}=c_0,$$
then $0.437...$ is probably a candidate of it. Their calculations follow the heuristic argument of Bombieri as well as a large amount of numeric operations.

Define the representation function $f(n)$ to be $$f(n)=\#\left\{(p,k):n=p+2^k,~p\in \mathbb{P},~k\in \mathbb{N}\right\}.$$
Noting that (in the proof of the Romanov theorem)$$\sum_{n\leqslant x}f^2(n)\ll x,$$
Tur\'{a}n asked Erd\H os (written communication) whether the function $f(n)$ is the unbounded one.
Erd\H os \cite{Er1} gave an affirmative answer to the question of Tur\'{a}n by showing the following stronger result:
\begin{align}\label{eq0}
\limsup_{n\rightarrow\infty}\frac{f(n)}{\log\log n}>0.
\end{align}
The unboundedness of the function $f(n)$
for integers $n$ having at most two prime factors was then established by Lu \cite{Lu},
following a remark of Friedlander and Iwaniec \cite{FI} which claimed that $f(n)$ is greater than 2
for infinitely many integers $n$ with at most two prime factors.
Further, for any subset $\mathscr{A}$ of $\mathbb{N}$, let
$$f_{2^\mathscr{A}}(n)=\#\left\{(p,a):n=p+2^a,~p\in \mathbb{P},~a\in \mathscr{A}\right\}.$$
With this notation, we shall have $f(n)=f_{2^\mathbb{N}}(n)$.
Erd\H os' proof can be extended to the result
$$\limsup_{n\rightarrow\infty}\frac{f_{2^\mathscr{A}}(n)\log n}{\mathscr{A}\left(\frac{\log n}{\log2}\right)\log\log n}>0,$$
where $\mathscr{A}(x)=|\mathscr{A}\bigcap[1,x]|$. So if one takes
the subset $\mathscr{A}$ to be the prime set $\mathbb{P}$, we just
miss unboundedness fact of the representation function
$f_{2^\mathbb{P}}(n)$. Based on this observation, the first author
of the present paper \cite{Ch} posed the following conjecture:
$$\limsup_{n\rightarrow\infty}f_{2^\mathbb{P}}(n)=\infty.$$
This conjecture was solved by the second author and Zhou \cite{DZ} earlier.

It turns out that the magnitude of the function $f_{2^\mathscr{A}}(n)$ is quite difficult to decide. For example,
Erd\H os \cite{Er1} conjectured $f(n)=o(\log n)$. However, as Erd\H os \cite{Er1} said, he can even not prove that not all the integers
$$n-2^k~~\left(1\leqslant k<\frac{\log n}{\log 2}\right)$$
are primes for sufficiently large numbers $n$. A careful checking of the prime table (up to $203775$)
by Erd\H{o}s indicated that the largest exceptional integer should be 105. For integers $n\leqslant x$,
Vaughan \cite{Va} proved that the number of the exceptional integers is $O(x\exp(-c_1\log x\log\log\log x/\log\log x))$
for some constant $c_1>0$ by the Montgomery sieve \cite{Mo} (i.e., the arithmetic large sieve).
Under the assumption of the extended Riemann Hypothesis, Hooley \cite{Ho} proved that the exceptional numbers
are less than $O(x^{1-\lambda+\varepsilon})$ for any sufficiently small positive number $\varepsilon$,
where $\lambda=\prod_{p}\left(1-\frac{1}{p(p-1)}\right)$. Narkiewicz \cite{Na}
improved this to $O(x^{1-\lambda/\log 2+\varepsilon})$ under the same assumption.  Elsholtz \cite{Els} considered a related problem.

In the same paper, Erd\H{o}s made the following anecdotal conjecture.

{\bf Conjecture (Erd\H{o}s 1950).} {\it Let $c$ be any constant and $x$
sufficiently large,
$$a_1<a_2<\cdot\cdot\cdot<a_t\leqslant x,~t>\log x.$$
Then there exists an integer $n$ so that the number of solutions
of $n=p+a_i$ $(p\in \mathbb{P}, 1\le i\le t)$ is greater than
$c$.}

The case $a_i=2^i$ of the conjecture was proved by Erd\H{o}s himself as we mentioned above (Tur\'{a}n's question). In a former note
\cite{Di}, the second author gave a slight generalization of Erd\H os' theorem. To be precise,
suppose that $\mathscr{A}=\{a_1<a_2<a_3<\cdot\cdot\cdot\}$ is a set satisfying $\mathscr{A}(x)>\log x$ and
$a_i\mid a_{i+1}$ for all sufficiently large $x$ and $i$, then
\begin{align}\label{eq1}
\limsup_{n\rightarrow\infty}\frac{f_{\mathscr{A}}(n)}{\sqrt{\log\log n}}>0,
\end{align}
where $$f_{\mathscr{A}}(n)=\#\left\{(p,a):n=p+a,~p\in
\mathbb{P},~a\in \mathscr{A}\right\}.$$ After the second author
and Zhou \cite{DZ} proved the conjecture for the case
$a_i=2^{p_i}$, where $p_i$ is the $i$-th prime. The authors of the
present paper \cite{CD} recognized that the complete proof of
Erd\H{o}s' conjecture actually follows directly from a new
achievement of the distributions of the primes established by
Maynard--Tao \cite{Ma,Ta}. In fact, we proved the following more
stronger result.

{\bf Theorem A.} (Chen--Ding) {\it Let  $x\ge 2$ and the positive integers $a_i$ satisfy
$$a_1<a_2<\cdot\cdot\cdot<a_t\leqslant x,~t>\log x.$$
Then there exists infinitely many integers $n$ so that the number
of solutions of $n=p+a_i$ $(p\in \mathbb{P}, 1\le i\le t)$ is
greater than $ \frac 18 \log \log x -1.6$.}

In Theorem A, we do not know the magnitude of the least $n$. In
this paper, we give a quantitative result of the Erd\H os
conjecture.

Now, let us state the following theorem which is the main purpose of our paper.

\begin{theorem}\label{thm1}
For $0<\alpha <\frac 14$, there are two positive constants $c$ and $c'$ depending on $\alpha
$ such that if $x$ is a sufficiently large number, $t>(\log x)^\alpha$, $t$ pairs of positive integers $a_i,~\ell_i$ satisfy
$$a_1<a_2<\cdot\cdot\cdot<a_t\leqslant x,$$
and $$0<\ell_i<0.9\log\log x, \quad (\ell_i,a_i)=1 \quad (1\leqslant i\leqslant t),$$
then there are at least $x^{1/\alpha}\exp (-c (\log x)^\alpha )$
 integers $n\in \left[(x\log\log x)^{1/\alpha}, 2(x\log\log x)^{1/\alpha} \right)$ so that at least $c'\log\log n$ of the representations
$$\ell_in=p+a_i \quad (p\in \mathbb{P},1\leqslant i\leqslant t)$$
are available.
\end{theorem}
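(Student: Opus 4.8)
\emph{Overview.} The plan is to reduce, by a combinatorial selection, to an admissible tuple of linear forms of modest size, and then to invoke Maynard's theorem on dense clusters of primes in well--distributed sets. Put $Y:=(x\log\log x)^{1/\alpha}$, so the target interval is $[Y,2Y)$ and $\log\log Y=\log\log x+O(1)$. Writing $L_i(T):=\ell_iT-a_i$, the representation $\ell_in=p+a_i$ reads $p=L_i(n)$; and since $a_i\le x=Y^{\alpha}/\log\log x<Y\le\ell_iY$, each $L_i$ is positive on $[Y,2Y)$. Hence it suffices to produce $\gg Y\exp(-c(\log x)^{\alpha})$ integers $n\in[Y,2Y)$ for which $\gg\log\log x$ of the values $L_1(n),\dots,L_t(n)$ are prime.

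\emph{Step 1: extracting an admissible sub-tuple.} First note that the $L_i$ are pairwise non-proportional, since $\ell_jL_i=\ell_iL_j$ would give $\ell_ja_i=\ell_ia_j$ and then $(\ell_i,a_i)=(\ell_j,a_j)=1$ forces $\ell_i=\ell_j$, $a_i=a_j$. For a prime $q$: if $q\mid\ell_i$ then $(\ell_i,a_i)=1$ makes $L_i(T)\equiv-a_i\not\equiv0\pmod q$ identically, while if $q\nmid\ell_i$ then $L_i(T)\equiv0\pmod q$ precisely for $T\equiv a_i\ell_i^{-1}\pmod q$. I would therefore pick, for each prime $q\le t$, an independent uniformly random residue $\rho_q\bmod q$, and retain those $L_i$ with $\ell_i\rho_q\not\equiv a_i\pmod q$ for all such $q$; the expected number retained is at least $t\prod_{q\le t}(1-1/q)\gg t/\log t$ by Mertens' theorem. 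Fixing a realisation attaining this, the retained family $\{L_i\}_{i\in\mathcal S}$ with $|\mathcal S|\gg t/\log t$ is admissible: for $q\le t$ the residue $\rho_q$ is avoided by every kept form, and for $q>t\ge|\mathcal S|$ admissibility at $q$ is automatic. Since $t>(\log x)^{\alpha}$, after fixing some $\beta\in(0,\alpha)$ I would pass to a sub-tuple of exactly $k:=\lceil(\log x)^{\beta}\rceil$ forms --- still admissible, still pairwise non-proportional, with heights $\ell_i<\log\log x$ and $a_i\le Y^{\alpha}$. This is the advertised ``technical lemma'': from an arbitrary family of distinct linear forms with $(\ell_i,a_i)=1$ one can carve out an admissible subfamily of any prescribed size up to $\gg t/\log t$.

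\emph{Step 2: Maynard's criterion and the count.} I would apply Maynard's theorem on dense clusters of primes in subsets with $\mathcal A=[Y,2Y)\cap\Z$, $\mathcal P=\mathbb P$, and the admissible $k$-tuple $\{L_i\}_{i\in\mathcal S}$, checking his hypothesis at a fixed level $\theta>0$: (a) $\mathcal A$ equidistributes in residue classes to level arbitrarily close to $1$, which is immediate for an interval; (b) for each $i\in\mathcal S$ the primes $L_i(n)$ with $n\in[Y,2Y)$ equidistribute in residue classes on average over moduli up to $Y^{\theta}$ --- this is the Bombieri--Vinogradov theorem for the progressions cut out by $L_i$, and since $\ell_i<\log\log x$ and $a_i\le Y^{\alpha}$ with $\alpha<\tfrac14<\tfrac12$ the conductors stay well within range, with $\theta$ taken uniformly below $\tfrac12$; (c) the accompanying upper-bound sieve estimate and the non-degeneracy condition, valid because the forms are non-proportional of small height. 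Maynard's theorem then yields $\gg\#\mathcal A/(\log Y)^{k}$ integers $n\in[Y,2Y)$ for which $\gg\log k$, hence $\gg\log\log n$, of the forms $L_i(n)$, $i\in\mathcal S$, are prime. Since $\beta<\alpha$ gives $k\log\log Y=(1+o(1))(\log x)^{\beta}\log\log x<(\log x)^{\alpha}$ for large $x$,
\[
\frac{\#\mathcal A}{(\log Y)^{k}}\;\gg\;Y\exp\big(-(\log x)^{\alpha}\big)\;\gg\;x^{1/\alpha}\exp\big(-c(\log x)^{\alpha}\big),
\]
which is the desired conclusion after relabelling the constants.

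\emph{Main obstacle.} The hard part will be Step 2, and precisely the \emph{uniform} verification of Maynard's hypothesis over the whole family $\{L_i\}$: the multipliers $\ell_i$ require controlling primes in the residue classes $-a_i\bmod\ell_i$ simultaneously and on average over an extra modulus of size up to $Y^{\theta}$, and in the borderline ranges this needs a Bombieri--Vinogradov-type input together with Diophantine-approximation estimates for the Beatty-type sequences $\{\lfloor(p+a_i)/\ell_i\rfloor\}$ that arise when one passes between the variable $n$ and the prime $p$. The constraints $\ell_i<0.9\log\log x$ and the exponent $1/\alpha$ defining the interval are exactly what render the resulting error terms, and the admissibility corrections from Step 1, negligible; identifying the permissible $\theta$ and tracking every constant so that the count emerges in the precise form $x^{1/\alpha}\exp(-c(\log x)^{\alpha})$ is the technical heart of the argument.
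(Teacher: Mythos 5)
Your Step 1 is fine and is essentially a probabilistic rendering of the paper's Lemma \ref{CD1} (the paper extracts an admissible subfamily of size $\gg t/\log t$ by a deterministic greedy argument over small primes; your random-residue selection gives the same conclusion). The genuine gap is in Step 2, and you have misdiagnosed where the difficulty lies. Maynard's Hypothesis 1(2) does not ask for a saving of a fixed power of $\log x$: it asks for
$$\sum_{\substack{q\leqslant x^{\theta}\\(q,B)=1}}\max_{(L_i(a),q)=1}\Bigl|\mathcal{P}_{L_i,\mathcal{A}}[x;q,a]-\frac{\mathcal{P}_{L_i,\mathcal{A}}[x]}{\varphi_{L_i}(q)}\Bigr|\ll \frac{\mathcal{P}_{L_i,\mathcal{A}}[x]}{(\log x)^{100k^2}},$$
and with your $k=\lceil(\log x)^{\beta}\rceil$ the required saving is $\exp\bigl(100(\log x)^{2\beta}\log\log x\bigr)$, far beyond what the classical Bombieri--Vinogradov theorem (which saves only $(\log x)^{-A}$ for fixed $A$) can deliver. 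This is exactly why the statement carries an integer $B$ and the condition $(q,B)=1$: one must invoke the Landau--Page theorem to isolate at most one possible exceptional modulus $q_0$, set $B=P(q_0)$, and then run the large-sieve/character-sum argument (Davenport, pp.~160--164) to obtain an error term of size $x\exp(-c\sqrt{\log x})$, which beats $(\log x)^{100k^2}$ precisely because $2\alpha<1/2$ --- this is where the hypothesis $\alpha<\tfrac14$ is used. Moreover, the constraint $\ell_i<0.9\log\log x$ is not a bookkeeping convenience: it guarantees $(\ell_i,B)=1$ (since $B$ turns out to be a prime exceeding $0.9\log\log x$), so that the composite moduli $q\ell_i$ occurring after the change of variables stay free of the exceptional character. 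None of this appears in your sketch, and your ``main obstacle'' paragraph about Beatty sequences and Diophantine approximation is not relevant to this theorem (Beatty sequences enter only in the paper's separate refinement, Section 5, where $\mathcal{A}$ is taken sparse).

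Two further points you should repair. First, Maynard's Theorem 3.1 (as quoted and as used) requires the linear functions to have coefficients $0\leqslant u_i,v_i\leqslant x^{\alpha}$; your forms $\ell_i n-a_i$ have negative constant terms, so you must shift, e.g.\ set $v_i=\ell_i a_t-a_i\geqslant 0$ and work with $\ell_i n+v_i$, recovering the original representations via $\widetilde n=n+a_t$ at the end --- this is exactly what the paper does, and it costs only a harmless adjustment of the interval and the constant. Second, you also need the lower-bound input of Maynard's criterion, namely $\frac{\varphi(B)\varphi(\ell_i)}{B\ell_i}\mathcal{P}_{L_i,\mathcal{A}}[x]\geqslant\delta\,\mathcal{A}[x]/\log x$ for each $i$; verifying it uses $\varphi(B)/B=1+O(1/\log\log x)$ (again the structure of $B$) together with the Siegel--Walfisz theorem for the moduli $\ell_i\leqslant 0.9\log\log x$, another place where the size restriction on the $\ell_i$ is consumed. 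With these ingredients supplied, your outline matches the paper's route (Lemma \ref{CD1} plus Theorem \ref{Maynard1} plus Proposition \ref{Pro}); without them, Step 2 as written does not go through.
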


The quantitative result of the Erd\H os conjecture mentioned above is a special case of Theorem \ref{thm1} by taking $\ell_i=1$ for all $i$.

For two sequences $\mathscr{A}=\{ a_1<a_2<a_3<\cdots\}$ and $\mathscr{L}=\{ \ell_1, \ell_2, \ell_3,\cdots\}$ ($\ell_i$ not~necessarily~different) with $(\ell_i,a_i)=1$, let $f_{\mathscr{A},\mathscr{L}}(n)$ be the extended representation function defined as
$$f_{\mathscr{A},\mathscr{L}}(n)=\#\left\{i:\ell_i n=p+a_i,~a_i\in \mathscr{A},~\ell_i\in\mathscr{L},~p\in \mathbb{P},~1\le i\le \mathscr{A}(n)\right\}.$$
With this definition we shall have $f_{\mathscr{A},\mathscr{L}}(n)=f_{\mathscr{A}}(n)$ for $\ell_i=1$. From Theorem \ref{thm1}, we have the following corollaries.

\begin{corollary}\label{cor1}
For $\alpha >0$, let $$\mathscr{A}=\{ a_1<a_2<a_3<\cdots\}$$ and $$\mathscr{L}=\{ \ell_1, \ell_2, \ell_3,\cdots\} \quad \text{(not~necessarily~different)}$$ be two sequences of positive
integers with $\mathscr{A}(m)>(\log m)^\alpha $ for infinitely
many positive integers $m$ and $\ell_m<0.9\log\log m$ for sufficiently large integers $m$.  We have
$$\limsup_{n\to \infty } \frac{f_{\mathscr{A},\mathscr{L}}(n)}{\log\log n}>0.$$
\end{corollary}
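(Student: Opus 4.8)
The plan is to derive Corollary~\ref{cor1} directly from Theorem~\ref{thm1} by a routine ``scale-up'' argument along the subsequence of $m$ for which $\mathscr{A}(m)$ is large. First I would fix a value $\alpha'$ with $0<\alpha'<\min\{\alpha,\tfrac14\}$; since $\mathscr{A}(m)>(\log m)^{\alpha}\ge (\log m)^{\alpha'}$ for infinitely many $m$, it suffices to work with this smaller exponent, so without loss of generality we may assume $0<\alpha<\tfrac14$ in what follows. Let $m$ be a large integer from this infinite set and put $x=m$, $t=\lfloor(\log x)^{\alpha}\rfloor+1$ (so $t>(\log x)^{\alpha}$ and $t\le \mathscr{A}(m)$), and take the first $t$ terms $a_1<\dots<a_t$ of $\mathscr{A}$ together with the corresponding $\ell_1,\dots,\ell_t$ from $\mathscr{L}$. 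Because $\ell_i=\ell_i$ is a term of $\mathscr{L}$ with index $i\le t\le x$ and $\ell_i<0.9\log\log i$ for large $i$ while $\ell_i$ is simply bounded for the finitely many small $i$, one checks $0<\ell_i<0.9\log\log x$ for all $1\le i\le t$ once $x$ is large; also $a_t\le \mathscr{A}(m)$-th term, so $a_t\le$ some value which we may (after possibly discarding the very smallest finitely many exceptional $m$) bound by $x$ — more carefully, since the terms $a_i$ are increasing positive integers we only know $a_t\ge t$, so one should instead apply Theorem~\ref{thm1} with ``$x$'' replaced by $X:=\max\{m,a_t\}$, noting $t>(\log m)^{\alpha}\ge(\log X)^{\alpha}$ fails in general; the clean fix is to choose $x$ so that $a_t\le x$ and simultaneously $t>(\log x)^{\alpha}$, which is possible precisely because $\mathscr{A}(m)>(\log m)^{\alpha}$: take $x=m$ and note $a_t\le a_{\mathscr{A}(m)}\le m$ is automatic since $a_{\mathscr{A}(m)}$ is an element of $\mathscr{A}$ lying in $[1,m]$.

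With these parameters, Theorem~\ref{thm1} produces an integer $n\in\bigl[(x\log\log x)^{1/\alpha},\,2(x\log\log x)^{1/\alpha}\bigr)$ — in fact at least $x^{1/\alpha}\exp(-c(\log x)^{\alpha})\ge 1$ such integers — for which at least $c'\log\log n$ of the representations $\ell_i n=p+a_i$ with $p\in\mathbb P$ and $1\le i\le t$ are available. I then need two bookkeeping points: (i) each available representation with index $i\le t$ is counted in $f_{\mathscr{A},\mathscr{L}}(n)$, which ranges over $1\le i\le \mathscr{A}(n)$, so I must check $t\le \mathscr{A}(n)$; this holds because $n\ge m=x$ forces $\mathscr{A}(n)\ge\mathscr{A}(x)=\mathscr{A}(m)>(\log m)^{\alpha}>t$ up to the harmless integer part. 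Hence $f_{\mathscr{A},\mathscr{L}}(n)\ge c'\log\log n$. (ii) As $m$ ranges over the infinite set, $x=m\to\infty$, hence $n\to\infty$ as well, so we obtain infinitely many $n$ with $f_{\mathscr{A},\mathscr{L}}(n)\ge c'\log\log n$. Dividing by $\log\log n$ and taking the limit superior gives
$$\limsup_{n\to\infty}\frac{f_{\mathscr{A},\mathscr{L}}(n)}{\log\log n}\ge c'>0,$$
which is the assertion of the corollary.

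The only genuinely delicate point — and the one I would present carefully rather than wave through — is the compatibility of the hypotheses of Theorem~\ref{thm1} with those of the corollary for a \emph{fixed large} $m$: namely that $a_t\le x$, that $t>(\log x)^{\alpha}$, and that $0<\ell_i<0.9\log\log x$ simultaneously, with the \emph{same} $x$. Choosing $x=m$ makes $t>(\log x)^{\alpha}$ and $a_t\le m=x$ automatic (the latter since $a_1<\dots<a_t$ are $\mathscr{A}(m)$ or fewer of the integers in $[1,m]$ and $t\le\mathscr{A}(m)$), and the bound on $\ell_i$ follows from $\ell_m<0.9\log\log m$ holding eventually together with $\ell_i$ being bounded on the finite initial segment and $\log\log x\to\infty$. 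Everything else is a direct quotation of Theorem~\ref{thm1} plus the monotonicity $\mathscr{A}(n)\ge\mathscr{A}(x)$ for $n\ge x$, so no further input (no sieve, no Maynard-type machinery) is needed beyond the theorem already proved.
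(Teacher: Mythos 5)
Your argument is correct and is essentially the paper's own proof: replace $\alpha$ by a fixed exponent below $\tfrac14$ (the paper uses $\alpha_1=\min\{\alpha,\tfrac15\}$), apply Theorem \ref{thm1} with $x=m$ along the infinite sequence of $m$ with $\mathscr{A}(m)>(\log m)^{\alpha}$, and let $m\to\infty$ to get the limsup bound $\ge c'>0$. Your extra bookkeeping (that $a_t\le a_{\mathscr{A}(m)}\le m$, that $t\le\mathscr{A}(m)\le\mathscr{A}(n)$ so the representations are counted by $f_{\mathscr{A},\mathscr{L}}(n)$, and that $\ell_i<0.9\log\log m$ for all $i\le t$ once $m$ is large) only makes explicit steps the paper leaves implicit.
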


\begin{remark} Taking $\mathscr{A}=\{2^i:i\in \mathbb{N}\}$ and $\ell_i=1$, then Corollary \ref{cor1} reduces to
$$\limsup_{\substack{n\to \infty }} \frac{f(n)}{\log\log n}>0,$$
which is the old result obtained by Erd\H os in 1950. Taking $\mathscr{A}=\{2^p:p\in \mathbb{P}\}$ and $\ell_i=1$, then Corollary \ref{cor1} reduces to
$$\limsup_{\substack{n\to \infty }} \frac{f_{2^{\mathbb{P}}}(n)}{\log\log n}>0,$$
which offers a much more stronger result obtained by the second author and Zhou formerly.
\end{remark}

 \begin{corollary}\label{cor1a} Let $\alpha >0$ and $\mathscr{A}=\{ a_1, a_2, \cdots   \} $ be a strictly increasing sequence of positive
integers with $\mathscr{A}(x)>(\log x)^\alpha $ for all
sufficiently large $x$. Then there are two positive constants $c$ and $c'$
depending only on $\alpha $ such that  for each
sufficiently large $x$, there are at least $x\exp (-c (\log
x)^\alpha )$ integers $n\le x$ with
$$f_{\mathscr{A}}(n)\ge c'\log\log x.$$
 \end{corollary}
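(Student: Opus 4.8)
The plan is to obtain Corollary~\ref{cor1a} directly from Theorem~\ref{thm1} by taking all $\ell_i=1$ and feeding in a suitable truncation of $\mathscr{A}$, after a harmless rescaling of the parameter.

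First I would reduce to the case $0<\alpha<\tfrac14$. If $\mathscr{A}(x)>(\log x)^{\alpha}$ for all large $x$, then a fortiori $\mathscr{A}(x)>(\log x)^{\alpha'}$ for all large $x$ whenever $0<\alpha'\le\alpha$; and since $(\log x)^{\alpha'}\le(\log x)^{\alpha}$, a lower bound of shape $x\exp(-c(\log x)^{\alpha'})$ implies one of shape $x\exp(-c(\log x)^{\alpha})$. Hence we may replace $\alpha$ by $\min\{\alpha,\tfrac18\}$ and assume $0<\alpha<\tfrac14$ from now on, the resulting constants still depending only on $\alpha$.

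Second, fix a large $x$ and set $y=\big\lfloor (x/2)^{\alpha}/(\log\log x)^{2}\big\rfloor$. Since $y\le x$ we have $\log\log y\le\log\log x$, hence $y\log\log y\le (x/2)^{\alpha}/\log\log x\le (x/2)^{\alpha}$, so that $2(y\log\log y)^{1/\alpha}\le x$; on the other hand, for $x$ large, $y^{1/\alpha}\ge x\big/\big(2^{1+1/\alpha}(\log\log x)^{2/\alpha}\big)$ and $(\log y)^{\alpha}\le(\log x)^{\alpha}$. Since $y\to\infty$ as $x\to\infty$, the hypothesis supplies $\mathscr{A}(y)>(\log y)^{\alpha}$ for $x$ large, so I may pick an integer $t$ with $(\log y)^{\alpha}<t\le\mathscr{A}(y)$ and let $a_1<a_2<\cdots<a_t$ be the first $t$ terms of $\mathscr{A}$, which then satisfy $a_t\le y$. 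Choosing $\ell_i=1$ for every $i$ (so that $(\ell_i,a_i)=1$ and trivially $0<\ell_i<0.9\log\log y$), Theorem~\ref{thm1} applied with $y$ in the role of $x$ yields at least $y^{1/\alpha}\exp(-c(\log y)^{\alpha})$ integers $n\in\big[(y\log\log y)^{1/\alpha},\,2(y\log\log y)^{1/\alpha}\big)$ for each of which at least $c'\log\log n$ of the equations $n=p+a_i$ $(1\le i\le t)$ hold with $p$ prime. As $a_1,\dots,a_t$ are distinct elements of $\mathscr{A}$, each such solution contributes a distinct pair to $f_{\mathscr{A}}(n)$, whence $f_{\mathscr{A}}(n)\ge c'\log\log n$.

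Third, I would translate these facts into the statement. Every $n$ produced satisfies $n<2(y\log\log y)^{1/\alpha}\le x$ and also $n\ge(y\log\log y)^{1/\alpha}\ge y^{1/\alpha}\ge x^{1/2}$ for $x$ large, so $\log\log n\ge\log\log x-\log 2\ge\tfrac12\log\log x$, and therefore $f_{\mathscr{A}}(n)\ge\tfrac{c'}{2}\log\log x$. The number of such $n$ is at least
$$
y^{1/\alpha}\exp\!\big(-c(\log y)^{\alpha}\big)\ \ge\ \frac{x}{2^{1+1/\alpha}(\log\log x)^{2/\alpha}}\,\exp\!\big(-c(\log x)^{\alpha}\big)\ \ge\ x\exp\!\big(-2c(\log x)^{\alpha}\big)
$$
for $x$ large, the last step because $(\log\log x)^{2/\alpha}=\exp\!\big(o((\log x)^{\alpha})\big)$. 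Renaming $2c$ as $c$ and $c'/2$ as $c'$ completes the proof. Everything here is elementary bookkeeping; the only point deserving attention is the calibration of $y$, which must be taken just below $(x/2)^{\alpha}$ — by a factor that is merely a power of $\log\log x$ — so that the interval $\big[(y\log\log y)^{1/\alpha},2(y\log\log y)^{1/\alpha}\big)$ handed back by Theorem~\ref{thm1} lies inside $(1,x]$ while $y^{1/\alpha}$ stays of order $x$.
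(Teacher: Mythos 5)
Your proposal is correct and follows essentially the same route as the paper's own proof: reduce to an exponent below $\tfrac14$ (the paper takes $\min\{\alpha,\tfrac15\}$), apply Theorem \ref{thm1} with $\ell_i=1$ to a truncation of $\mathscr{A}$ at a rescaled parameter, and then do the bookkeeping to place the resulting interval inside $[1,x]$ and absorb the $\log\log$ losses into the exponential factor. The only difference is cosmetic: the paper calibrates by solving $X\log\log X=(x/2)^{\alpha_1}$ exactly (so the interval becomes $[x/2,x)$), while you choose $y$ explicitly with a $(\log\log x)^{2}$ safety factor, which is equally valid.
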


\begin{corollary}\label{cor2}  Given an integer $\ell \ge 1$. Let
$f_\ell(n)$ be the number of solutions of $n=p+2^{m^\ell }$ with
$p\in \mathbb{P}$ and $m\in \mathbb{N}$. Then  there are two positive
constants $c$ and $c'$ depending only on $\ell $
 such that for each
sufficiently large $x$, there are at least $x\exp (-c (\log
x)^{1/\ell})$ integers $n\le x$ with
$$f_\ell (n)\ge c' \log\log x.$$
 \end{corollary}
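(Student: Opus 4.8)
The plan is to obtain Corollary \ref{cor2} as a direct specialization of Corollary \ref{cor1a}. Fix the integer $\ell\ge 1$, put $\alpha=1/\ell$, and take
$$\mathscr{A}=\{2^{m^\ell}:m\ge 1\}.$$
Since $m\mapsto m^\ell$ is strictly increasing on the positive integers, the numbers $2^{m^\ell}$ form a strictly increasing sequence of positive integers, so $\mathscr{A}$ is of the kind admitted in Corollary \ref{cor1a}. The only thing that needs to be checked is the density hypothesis $\mathscr{A}(x)>(\log x)^{\alpha}$ for all sufficiently large $x$.

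For this, compute
$$\mathscr{A}(x)=\#\{m\ge 1:2^{m^\ell}\le x\}=\lfloor(\log x/\log 2)^{1/\ell}\rfloor .$$
Writing $y=(\log x/\log 2)^{1/\ell}$ we have $(\log x)^{1/\ell}=(\log 2)^{1/\ell}\,y$ with $(\log 2)^{1/\ell}<1$, while $\lfloor y\rfloor>y-1$; hence for all sufficiently large $x$,
$$\mathscr{A}(x)=\lfloor y\rfloor>(\log 2)^{1/\ell}\,y=(\log x)^{1/\ell}=(\log x)^{\alpha},$$
which is exactly the needed hypothesis. This is the one step that calls for a little care: one must track the factor $\log 2$ and the floor closely enough to land on the exponent $1/\ell$ with a genuine inequality $>(\log x)^{\alpha}$ (not merely $\gg(\log x)^{\alpha}$), since this exponent is what becomes the $\exp(-c(\log x)^{1/\ell})$ in the conclusion.

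Now apply Corollary \ref{cor1a} to this $\mathscr{A}$ with $\alpha=1/\ell$: there are constants $c,c'>0$ depending only on $\alpha$, hence only on $\ell$, such that for every sufficiently large $x$ there are at least $x\exp(-c(\log x)^{1/\ell})$ integers $n\le x$ with $f_{\mathscr{A}}(n)\ge c'\log\log x$. Finally, because the powers $2^{m^\ell}$ are pairwise distinct, $f_{\mathscr{A}}(n)$ counts exactly the pairs $(p,m)$ with $n=p+2^{m^\ell}$ and $p\in\mathbb{P}$, so $f_{\mathscr{A}}(n)=f_\ell(n)$ and the displayed inequality becomes $f_\ell(n)\ge c'\log\log x$, which is the assertion of Corollary \ref{cor2}. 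Apart from the density computation, there is no obstacle: all the substance already resides in Theorem \ref{thm1} and in Corollary \ref{cor1a}, which was deduced from it.
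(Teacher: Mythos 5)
Your proposal is correct and is essentially the paper's own proof: the paper also takes $a_i=2^{i^\ell}$ and deduces Corollary \ref{cor2} from Corollary \ref{cor1a} after noting $\mathscr{A}(x)\ge(\log x/\log 2)^{1/\ell}>(\log x)^{1/\ell}$. Your treatment of the floor function is in fact slightly more careful than the paper's one-line verification, but the route is identical.
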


\begin{remark} If $\ell \ge 2$, then
$$|\{ n\le x : f_\ell (n)\ge 1 \} | \ll \frac{x}{\log x} \cdot (\log
x)^{1/\ell } = \frac x{(\log x)^{1-1/\ell }}.$$ It follows that
the set of integers $n$ that can be represented as the form
$n=p+2^{m^\ell }$ with $p\in \mathbb{P}$ and $m\in \mathbb{N}$
has asymptotic density zero. But Corollary \ref{cor2} shows that
$$\limsup_{\substack{n\rightarrow\infty}} \frac{f_\ell (n)}{\log\log n} >0.$$
 \end{remark}

By employing a deep result of  Maynard \cite[Theorem 3.1]{Ma2}, we
prove the following result which is used to prove Theorem
\ref{thm1}. This is of interest itself.

\begin{theorem}\label{M}  Let $0<\alpha < \frac 14$. Then there exist a positive constant
$C_0$ depending only on $\alpha $ such that for sufficiently
large number $x$ and $C_0\le k\leqslant (\log x)^{\alpha }$,  if
$\{ u_1n+v_1,\cdots , u_kn+v_k\}$ is a set of $k$ distinct linear
functions with $(u_i, v_i)=1$, $0<u_i<0.9\log\log x$ and
$0\leqslant v_i\leqslant x^{\alpha }$ for all $1\leqslant
i\leqslant k$, then
\begin{equation*}
\#\left\{x\le n<2x:|\{ u_1n+v_1,\dots , u_kn+v_k \}\cap
\mathbb{P}|\geqslant C_0^{-1} \log k \right\}\gg x\exp (-C_0 (\log
x)^\alpha ).
\end{equation*}
\end{theorem}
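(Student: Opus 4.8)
The plan is to deduce Theorem~\ref{M} from Maynard's result \cite[Theorem 3.1]{Ma2} on primes in well--distributed sets. The strategy has two parts: first, a combinatorial extraction step that passes from an \emph{arbitrary} family of $k$ distinct linear functions (satisfying only the congruence, positivity, and size constraints) to a sub-family that is \emph{admissible} (i.e. whose reductions mod every prime $p$ avoid a full residue system), while retaining a fixed proportion of the functions --- this is the ``technical lemma'' the abstract advertises. Second, once we have an admissible sub-family of size $\gg k$, we check the hypotheses of Maynard's theorem (the well--distribution property of the progression $\{ n \equiv n_0 \pmod{W}, \ x \le n < 2x\}$ for a suitable $W$ and residue $n_0$ built from the linear functions) and invoke it to produce $\gg x \exp(-C_0(\log x)^\alpha)$ values of $n$ in $[x,2x)$ for which at least $C_0^{-1}\log k$ of the $u_i n + v_i$ are simultaneously prime.

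First I would set up the sieve weight / $W$-trick scaffolding. Let $W = \prod_{p \le D} p$ for a threshold $D$ chosen so that $\log W \asymp (\log x)^\alpha$ (this is where the exponential loss $\exp(-C_0(\log x)^\alpha)$ in the count ultimately comes from: restricting $n$ to a single residue class mod $W$ costs a factor $1/\varphi(W) = \exp(-(1+o(1))\,\vartheta(D))$). For each prime $p \le D$ I must choose the residue $n \equiv n_0 \pmod p$ so that none of the linear forms $u_i n_0 + v_i$ vanishes mod $p$; since there are $k \le (\log x)^\alpha$ forms and each kills at most one residue class mod $p$ (using $(u_i,v_i)=1$ and $u_i < 0.9\log\log x < p$ for $p$ not too small, so $u_i$ is invertible mod $p$), such a class exists as long as $p > k$, and for the finitely many small primes $p \le k$ I instead \emph{discard} the offending forms --- here is the technical lemma: after removing at most $\sum_{p \le k} 1 \cdot(\text{a bounded number of forms per small prime})$ forms, or more precisely after a greedy argument bounding the number of forms lost, a positive proportion of the original $k$ forms survives and the surviving family is admissible. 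Getting the bookkeeping right --- that we lose only a $(1-o(1))$ or at worst constant fraction, so that $\log(\#\text{survivors}) \ge c \log k$ --- is the heart of the first half.

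Next I would verify that the set $\mathcal{N} = \{ n : x \le n < 2x,\ n \equiv n_0 \pmod W \}$ together with the surviving linear forms satisfies Maynard's hypothesis for well--distributed sets: one needs the relevant sums $\sum_{n \in \mathcal{N}} \mathbf{1}_{u_i n + v_i \equiv 0 \,(q)}$ to behave like $|\mathcal{N}|/q$ on average over $q$ up to $x^{\vartheta}$ for some fixed level $\vartheta > 0$ (a Bombieri--Vinogradov--type input), and one needs the singular-series / local-density factors to be bounded below --- admissibility guarantees this. Because $W$ is only of size $\exp((\log x)^\alpha)$, which is much smaller than any fixed power of $x$, the modulus $W$ does not eat into the level of distribution, so the Bombieri--Vinogradov range is preserved. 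The bound $v_i \le x^\alpha$ and $u_i < 0.9\log\log x$ ensure the forms have coefficients small enough that $u_i n + v_i \asymp x$ for $n \in [x,2x)$ and that the standard equidistribution estimates apply uniformly. Feeding this into \cite[Theorem 3.1]{Ma2} yields, for the survivor count $k' \gg k$, at least $(\log k')/C \ge C_0^{-1}\log k$ simultaneous primes among the forms for a number of $n \in \mathcal{N}$ that is $\gg |\mathcal{N}| \gg x/\varphi(W) \gg x\exp(-C_0(\log x)^\alpha)$, after enlarging $C_0$ if necessary to absorb all implied constants.

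The main obstacle I anticipate is the first step --- establishing that one can always extract a positive-proportion admissible sub-family. The naive count ``lose one form per prime $p \le k$'' gives up to $\sum_{p\le k} 1 \sim k/\log k$ losses if each small prime could force the removal of a form, which is fine; but one has to be careful that removing a form to fix prime $p$ does not create a new obstruction at prime $p'$, and that the invertibility of $u_i$ mod $p$ (needed so that ``$u_i n + v_i \equiv 0$'' is a single residue class rather than all of $\mathbb{Z}/p$) can fail precisely when $p \mid u_i$, i.e. for the small primes dividing some $u_i < 0.9\log\log x$. Handling those forms --- where $p \mid u_i$ forces $p \nmid v_i$ by coprimality, so the form is \emph{never} $\equiv 0 \pmod p$ and hence causes no problem at all --- actually simplifies matters, but the argument must be organized to see this. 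The constraint $\alpha < \tfrac14$ is exactly what is needed so that $k \le (\log x)^\alpha$ is small enough relative to the level of distribution $\vartheta$ available in Maynard's theorem (schematically, one needs $\log k \ll \vartheta \log x$ with room to spare for the dimension/uniformity in Maynard's setup), so I would track where $\tfrac14$ enters and confirm the inequalities close.
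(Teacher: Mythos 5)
Your overall strategy (extract an admissible sub-family from the arbitrary family, then feed it into Maynard's \cite[Theorem 3.1]{Ma2} after checking {\bf Hypothesis 1}) is the same as the paper's, but two of your concrete claims do not hold as stated, and they matter. First, the extraction step: you assert that only ``a bounded number of forms per small prime'' must be discarded, so that a \emph{positive proportion} of the $k$ forms survives. This is false in general: at a prime $p\le k$ every residue class mod $p$ may carry about $k/p$ of the forbidden residues $-v_iu_i^{-1}$, so freeing one class can force the removal of $\asymp k/p$ forms, and the worst case (e.g.\ $u_i=1$, $v_i=i$, the family $n+1,\dots,n+k$) has largest admissible sub-family of size only $\asymp k/\log k$. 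The correct statement is the paper's Lemma \ref{CD1}: a greedy argument losing a $1/p$-fraction at each prime, with Mertens giving $\gg k/\log k$ survivors. This weaker bound still suffices for the conclusion, since $\log(k/\log k)\gg\log k$; but, more importantly, the ``loss'' is actually \emph{needed} for your quantitative count, which is the second gap. Maynard's Theorem 3.1 produces only $\gg \mathcal{A}[x]/\bigl((\log x)^{k'}e^{Ck'}\bigr)$ good $n$, i.e.\ a loss of $\exp(k'\log\log x+Ck')$. If, as you propose, you apply it to $k'\gg k$ forms with $k$ as large as $(\log x)^{\alpha}$, this loss is $\exp\bigl(c(\log x)^{\alpha}\log\log x\bigr)$, which no enlargement of $C_0$ can absorb into $\exp(-C_0(\log x)^{\alpha})$; your attribution of the exponential loss to $1/\varphi(W)\approx e^{-\vartheta(D)}$ ignores this factor entirely. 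The paper's Theorem \ref{Maynard1} closes exactly this point by applying Maynard only to the admissible sub-family of size $\ell\asymp k/\log k\le \alpha^{-1}(\log x)^{\alpha}/\log\log x$, so that $(\log x)^{\ell}e^{C\ell}\le\exp(C_1(\log x)^{\alpha})$, while still having $\log\ell\ge\tfrac12\log k$. (Incidentally, no $W$-trick is used: $\mathcal{A}=\mathbb{Z}$ throughout.)

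The other genuine gap is the verification of {\bf Hypothesis 1}(2), which you reduce to ``a Bombieri--Vinogradov-type input'' and ``standard equidistribution estimates''. For the forms $u_in+v_i$ the relevant moduli are $qu_i$, and the whole difficulty is the possible Landau--Siegel zero: the paper (Proposition \ref{Pro}) takes $B=P(q_0)$, the largest prime factor of the exceptional modulus $q_0$, shows $B>0.9\log\log x$ because $q_0\gg\log x/(\log\log x)^4$ is essentially square-free, and then uses the hypothesis $u_i<0.9\log\log x$ precisely to guarantee $(B,u_i)=1$, so that the restriction $(q,B)=1$ in Hypothesis 1(2) really removes the exceptional character from all moduli $qu_i$. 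In your write-up the bound $u_i<0.9\log\log x$ is treated as a mere uniformity convenience, and the Siegel zero is never mentioned, so this verification is missing rather than sketched. Relatedly, the role of $\alpha<\tfrac14$ is not ``room in the level of distribution'': it is that the available saving $\exp(-c\sqrt{\log x})$ (limited by Siegel--Walfisz) must beat $(\log x)^{100k^2}=\exp(100k^2\log\log x)$, which forces $2\alpha<\tfrac12$.
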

\begin{remark} The constraint $0<u_i<0.9\log\log x$ in Theorem \ref{M} could be relaxed to $u_i<(\log x)^{\beta}$ and $P(u_i)<0.9\log\log x$ due to Proposition \ref{Pro}, where $\beta$ is any positive number and $P(u_i)$ represents the largest prime factor of $u_i$.
\end{remark}

It should be mentioned that we do not require $\{ u_1n+v_1,\dots ,
u_kn+v_k\}$ to be an admissible set of linear functions in Theorem
\ref{M} (for the definition of the admissible set of linear functions, see the first paragraph of Section 2 below). This will bring convenience in future applications. Below, we give an application of it as an illustration of the convenience. Let $\pi(X;q,a)$ be the number of primes in the arithmetic progression $(qk+a)_{k=0}^{\infty}$ up to $X$ and $\varphi(n)$ be the Euler totient function.

\begin{corollary}\label{app} Let $\varepsilon$ be an arbitrarily small positive number, $a$ a positive
integer and let $x> e^e, y>1$ be numbers satisfying that at least
one of $x$ and $y$ is sufficiently large. Suppose that $1\le a\le
q<\min\left\{y^{1-\varepsilon},0.9\log\log x\right\}$ with
$(a,q)=1$, then there are $\gg x\exp\left(-\sqrt{\log x}\right)$
integers $n\in [x,2x)$ such that
\begin{align}\label{appeq}
\pi(qn+y;q,a)-\pi(qn;q,a)\gg\log y,
\end{align}
where the implied constant depends only on $\varepsilon$.
\end{corollary}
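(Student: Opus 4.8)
The plan is to split according to the size of $y$ relative to $x$. Since the hypothesis $q<y^{1-\varepsilon}$ only weakens as $\varepsilon\downarrow 0$, we may assume $\varepsilon<\tfrac1{16}$; we also fix $\alpha=\tfrac18$ in Theorem \ref{M}, with associated constant $C_0$ (which we may take $\ge 7$). The elementary dictionary is: a prime $p\equiv a\pmod q$ with $qn<p\le qn+y$ is precisely a value $q(n+i)+a$ of the linear function $qn+(qi+a)$ with $0\le i\le\lfloor(y-a)/q\rfloor$, and since $q<y^{1-\varepsilon}<y$ one has $a<y$, so there are at least $\lfloor y/q\rfloor$ admissible indices $i$.

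First, the range where $y$ is at most a fixed power of $\log x$. Here I would feed the $k$ linear functions $qn+(qi+a)$, $0\le i\le k-1$, with $k=\min\bigl(\lfloor y/q\rfloor,\lfloor(\log x)^{1/8}\rfloor\bigr)$, into Theorem \ref{M}: they are distinct, satisfy $(q,qi+a)=(q,a)=1$, $0<q<0.9\log\log x$ and $0\le qi+a\le qk\le q(\log x)^{1/8}\le x^{1/8}$ for large $x$, and $qk\le y$ in the relevant subranges, so every prime value lies in $(qn,qn+y]$. Crucially this family need not be admissible, which is exactly why Theorem \ref{M} — not Maynard--Tao — is the right tool. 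Theorem \ref{M} then supplies $\gg x\exp(-C_0(\log x)^{1/8})\gg x\exp(-\sqrt{\log x})$ integers $n\in[x,2x)$ with at least $C_0^{-1}\log k$ of the functions prime, hence $\pi(qn+y;q,a)-\pi(qn;q,a)\ge C_0^{-1}\log k$, and a short computation (using $\log k\ge\log(y/q)-1\ge\varepsilon\log y-1$ when $\lfloor y/q\rfloor\le(\log x)^{1/8}$, and $\log k\gg\log\log x$ together with $\log y\le\tfrac1{8\varepsilon}\log\log x$ in the complementary subrange) gives $C_0^{-1}\log k\ge c'\log y$ for a suitable $c'=c'(\varepsilon)$. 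If $y$ — and hence $q$ — is below a constant depending on $\varepsilon$, Theorem \ref{M} cannot be invoked, but then one only needs a single prime per window: the $\gg x/\log x$ primes $\equiv a\pmod q$ in $[qx,2qx]$ occupy $\le\lceil y/q\rceil=O_\varepsilon(1)$ of the windows $(qn,qn+y]$ apiece, so $\gg_\varepsilon x/\log x\gg x\exp(-\sqrt{\log x})$ windows are hit, and $1\ge c'\log y$ once $c'$ is small enough in terms of the threshold.

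For $y$ beyond the polylog range, Theorem \ref{M} is useless (it yields only $\ll\log\log x$ primes, while $\log y$ may be far larger), and I would instead run a crude pigeonhole between a \emph{lower} bound for the total and an \emph{upper} bound for each individual count. Put $c_n=\pi(qn+y;q,a)-\pi(qn;q,a)$. Interchanging summation, every prime $p\equiv a\pmod q$ in the bulk range $(qx+y,2qx]$ lies in $\gg y/q$ of the windows $(qn,qn+y]$, $n\in[x,2x)$, so Siegel--Walfisz (legitimate since $q<0.9\log\log x<\log(qx)$) gives $\sum_n c_n\gg\tfrac{xy}{\varphi(q)\log x}=:xM$ as long as $y\le 0.9qx$; meanwhile Brun--Titchmarsh gives $c_n\le\tfrac{2y}{\varphi(q)\log(y/q)}=:M_{\max}$ for \emph{every} $n$. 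The trivial inequality $\#\{n:c_n\ge M/4\}\ge(xM/2-xM/4)/M_{\max}$ then produces $\gg\tfrac{x\log(y/q)}{\log x}\gg x\exp(-\sqrt{\log x})$ "rich" windows (here $\log(y/q)\ge\varepsilon\log y$ dwarfs $\log x\,e^{-\sqrt{\log x}}$), and on each of them $c_n\ge M/4=\tfrac{y}{4\varphi(q)\log x}\ge\tfrac{y}{4(\log\log x)(\log x)}\ge c'\log y$ once $y\ge(\log x)^2$. The remaining range $y>0.9qx$ is the easiest: the window $(qn,qn+y]$ is then long enough that Siegel--Walfisz applied pointwise gives $c_n\asymp\tfrac{y}{\varphi(q)\log(qn+y)}\gg\log y$ for \emph{all} $n\in[x,2x)$, the error term $O\bigl((qn+y)\exp(-c\sqrt{\log(qn+y)})\bigr)$ being negligible against the main term because $\varphi(q)\le\min(q,0.9\log\log x)$ and $q<y^{1-\varepsilon}$, so one of the two bounds on $\varphi(q)$ always does the job.

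The main obstacle is precisely the intermediate range, roughly $(\log x)^{O_\varepsilon(1)}\le y\le x^{1-\delta}$, where neither Theorem \ref{M} nor a pointwise prime-number estimate is available and where an asymptotic for primes in almost all short intervals in progressions is not known (open already for $y$ a power of $\log x$). What rescues the statement is that only $\gg x\exp(-\sqrt{\log x})$ good $n$ are required — an exponentially thin set — so the soft combination "average lower bound (Siegel--Walfisz) $+$ no window too rich (Brun--Titchmarsh) $+$ pigeonhole" suffices and no second-moment input is needed. The rest is bookkeeping: fixing $c'(\varepsilon)$ compatibly across the cases, checking in each regime that the Siegel--Walfisz error is subordinate (comfortable because $\varphi(q)\le 0.9\log\log x$), and verifying that the $O(1)$ windows adjacent to $n=x$ and $n=2x$ contribute nothing of substance.
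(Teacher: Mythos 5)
Your proposal is correct and follows essentially the same route as the paper's proof: the same case decomposition into Theorem \ref{M} applied to the functions $qn+(qi+a)$ when $y$ is at most a power of $\log x$, a single prime via the Siegel--Walfisz theorem when $y$ is bounded, an average-count-plus-pigeonhole argument in the intermediate range $ (\log x)^{O(1)}\le y\ll qx$, and pointwise prime counting when $y\gg qx$ or when $x$ is bounded and $y$ is large. The only differences are cosmetic (you invoke Brun--Titchmarsh where the trivial bound $y/q+1$ per window already suffices, and in the tiny-$y$ subcase the justification should be that each window contains $O_\varepsilon(1)$ primes $\equiv a\pmod q$, not that each prime meets $O_\varepsilon(1)$ windows), so no substantive gap remains.
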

\begin{remark}
Corollary \ref{app} is an expansion of Maynard's result \cite[Theorem 3.2]{Ma2} provided that at least one of $x$ and $y$ is sufficiently large. More precisely,
if we take $q=a=1$ in Corollary \ref{app}, it reduces to the result
\begin{align}\label{reeq1}
\pi(n+y)-\pi(n)\gg \log y
\end{align}
for at least $\gg x\exp\left(-\sqrt{\log x}\right)$ integers $n\in [x,2x)$
obtained by Maynard, provided that at least one of $x$ and $y$ is sufficiently large. \end{remark}

Our paper will be organized as follows. In Section \ref{sec1}, we will give a lemma (Lemma \ref{CD1}) which
illustrates how to pick out the admissible sets from an
arbitrarily given set of linear functions. The lemma itself is elementary but removed the restriction of the admissible condition of distinct linear functions. Then we focus on verifying the conditions of {\bf Hypothesis 1} in the
Maynard theorem. This section would be the key ingredient of our article. In Section \ref{sec2}, we give the proof of Theorem \ref{M} via Proposition \ref{Pro}. In Section \ref{sec4}, we will use Theorem \ref{M} to prove
Theorem \ref{thm1} and give proofs to its corollaries. In Section \ref{111}, we explain that our results can be refined in some sparse sets rather than the whole natural numbers.
In the last section, some heuristic assumptions towards our research object are discussed.

\section{Clusters of primes in subsets}\label{sec1}

 The proof of Theorem \ref{M} follows from a quantitative
result of the admissible sets involving the distributions of
primes established by Maynard \cite{Ma2}. Firstly, we introduce
the notion of the admissible sets of linear functions. Let
 $\mathcal{L}=\{ u_in+v_i : 1\leqslant
i\leqslant k\}$ be a set of linear functions with $u_i\geqslant 1$
$(1\leqslant i\leqslant k)$. We call $\mathcal{L}$ an admissible
set of linear functions, if for each prime $p$ there is some
$m_p\in \mathbb{Z}$ such that
$$p\nmid\prod_{i=1}^{k}(u_im_p+v_i).$$

Below, we give a lemma which illustrates how to pick out the
admissible sets from an arbitrarily given set of linear functions.

\begin{lemma}\label{CD1} If $u_1n+v_1,\dots , u_sn+v_s$ are arbitrarily different $s(\geqslant 5)$
linear functions with $(u_i, v_i)=1$ and $u_i\geqslant 1$
$(1\leqslant i\leqslant s)$, then there are at least
$\gg\frac{s}{\log s}$ of them constituting an admissible set of
linear functions, where the implied constant is absolute.
\end{lemma}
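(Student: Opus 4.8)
The plan is to reduce admissibility, which is a condition over all primes, to a condition over small primes only, and then use a greedy/probabilistic selection to kill those small primes.

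The plan is to split the admissibility condition into a contribution from "small" primes and from "large" primes, and to dispatch the small primes with a single averaging argument over residues to a highly composite modulus.

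First I would record the following elementary observation. For any subcollection $\{u_in+v_i:i\in I\}$ of the given functions and any prime $p$, let $B_p=\{m\bmod p:\ p\mid\prod_{i\in I}(u_im+v_i)\}$. Then $|B_p|\le\#\{i\in I:\ p\nmid u_i\}\le|I|$: an index $i$ with $p\mid u_i$ contributes nothing to $B_p$, because $(u_i,v_i)=1$ forces $p\nmid v_i$ and hence $u_im+v_i\equiv v_i\not\equiv0\pmod p$ for every $m$, while each index with $p\nmid u_i$ rules out exactly the one residue class $m\equiv-v_iu_i^{-1}\pmod p$. In particular $\{u_in+v_i:i\in I\}$ is automatically admissible at every prime $p>|I|$, so the only primes that can obstruct admissibility of a subcollection of size at most $s$ are the primes $p\le s$.

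Next, set $W=\prod_{p\le s}p$ and let $m_0$ be uniformly random modulo $W$. For a fixed index $i$, the events ``$p\nmid u_im_0+v_i$'' over the primes $p\le s$ are independent (since $W$ is squarefree) and, by the discussion above, each has probability $1-1/p$ if $p\nmid u_i$ and probability $1$ if $p\mid u_i$; hence
\begin{equation*}
\Pr\bigl[(u_im_0+v_i,\,W)=1\bigr]=\prod_{\substack{p\le s\\ p\nmid u_i}}\Bigl(1-\frac1p\Bigr)\ \ge\ \prod_{p\le s}\Bigl(1-\frac1p\Bigr).
\end{equation*}
Summing over $i$ gives $\mathbb{E}\bigl[\#\{i:(u_im_0+v_i,W)=1\}\bigr]\ge s\prod_{p\le s}(1-1/p)\gg s/\log s$ by Mertens' theorem (with an absolute implied constant). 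Fix a value of $m_0$ attaining at least this average, and put $S=\{i:(u_im_0+v_i,W)=1\}$, so that $|S|\gg s/\log s$.

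Finally I would check that $S$ itself is admissible (no passing to a subset is needed): for a prime $p\le s$ the shift $m_p=m_0$ works, since by construction $p\nmid u_im_0+v_i$ for every $i\in S$, so $B_p\ne\mathbb{Z}/p\mathbb{Z}$; for a prime $p>s\ge|S|$ admissibility is automatic by the first step. The only loose end is the bounded range of small $s$ for which the Mertens estimate is not yet effective, but there the conclusion is trivial since any single function $u_in+v_i$ with $(u_i,v_i)=1$ is already admissible (same argument as the first step), so an adjustment of the absolute implied constant covers it. I do not anticipate any genuine obstacle; the one subtlety — and the reason a naive greedy "discard a few functions for each small prime" argument fails — is that the set of obstructing primes depends on the size of the selected set, a circularity that the averaging argument over $m_0\bmod\prod_{p\le s}p$ sidesteps by fixing the cutoff at $s$ once and for all.
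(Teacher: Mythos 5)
Your proof is correct, and it takes a genuinely different route from the paper. The paper argues greedily, prime by prime: for each function and prime $p_i$ it records the forbidden residue (via $\ell_{i,j}$, essentially $-v_ju_j^{-1}$ adjusted when $p_i\mid u_j$), then for $p_1,p_2,\dots$ in turn discards the functions lying in a least--occupied residue class (at most a $1/p_j$ proportion each time), so the survivors miss a class modulo every prime treated so far; the process stops at the first $t$ with $s_t<p_{t+1}$, and the two bounds $s_t\gg s/\log p_t$ (Mertens) and $s_t\ge p_t-1$ combine to give $s_t\gg s/\log s$, with large primes handled by the same ``fewer functions than residues'' observation you make. You instead fix the cutoff at $s$ once and for all, average over a single shift $m_0$ modulo $W=\prod_{p\le s}p$, and keep exactly those functions with $(u_im_0+v_i,W)=1$; Mertens then gives the expected count $\ge s\prod_{p\le s}(1-1/p)\gg s/\log s$ in one stroke. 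Your version is a one-shot averaging argument that even produces a single residue $m_0$ working simultaneously for all primes $p\le s$ (slightly more than admissibility requires, which only needs some $m_p$ for each $p$), and it cleanly isolates why only primes up to the cutoff matter; the paper's iterative scheme resolves the circularity you mention by its stopping rule $s_t<p_{t+1}$, and by only paying Mertens losses up to $p_t$ (roughly the size of the output set rather than $s$) it is marginally more economical in the constant, though both arguments land at the same $\gg s/\log s$ with absolute implied constants.
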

\begin{proof}
 Let $p_i$ be the $i$-th prime.
If $p_i\mid u_j$, then let $\ell_{i,j}=v_j$. If $p_i\nmid u_j$,
then let $\ell_{i,j}=u_{i,j}v_j$, where $u_j u_{i,j}\equiv
1\pmod{p_i}$. For $p_1$, there is a residue class modulo $p_1$
containing at most $\lfloor s/p_1\rfloor$ of
$\ell_{1,1},\ell_{1,2},...,\ell_{1,s}$. So at least $s-\lfloor
s/p_1\rfloor$ of $$\ell_{1,1},\ell_{1,2},...,\ell_{1,s}$$ occupy at
most $p_1-1$ residue classes modulo $p_1$. Denoting by
$s_1=s-\lfloor s/p_1\rfloor$, without loss of generality, we may
assume that
$$\ell_{1,1},\ell_{1,2},...,\ell_{1,s_1}$$
occupy at most $p_1-1$ of the residue classes modulo $p_1$.
Similarly,  at least $s_2=s_1-\lfloor s_1/p_2\rfloor$ elements of
$$\ell_{2,1},\ell_{2,2},...,\ell_{2,s_1}$$  occupy at most $p_2-1$ residue classes modulo
$p_2$. Without loss of generality, we may assume
$$\ell_{2,1},\ell_{2,2},...,\ell_{2,s_2}$$ occupy at most $p_2-1$ of the residue classes modulo $p_2$.
Continuing this process, let $t$ be the least integer with
$s_t<p_{t+1}$, where $s_0=s$. We have
$$s_j=s_{j-1}-\lfloor s_{j-1}/p_j\rfloor \quad (1\leqslant j\leqslant
t).$$
Now we have
\begin{align}\label{equ1}
s_t\geqslant
s_{t-1}\left(1-\frac{1}{p_t}\right)\geqslant\cdots\geqslant
s\left(1-\frac{1}{p_1}\right)\cdots\left(1-\frac{1}{p_t}\right)\gg\frac{s}{\log
p_t}
\end{align}
by the Mertens estimate. By the definition of $t$ we have
$s_{t-1}\geqslant p_{t}$, thus
\begin{align}\label{equ2}
s_t\geqslant s_{t-1}\left(1-\frac{1}{p_t}\right)\geqslant
p_t\left(1-\frac{1}{p_t}\right)= p_t-1.
\end{align}
From equations (\ref{equ1}) and (\ref{equ2}), it can be concluded
that
$$s_t\gg\frac{s}{\log p_t}\geqslant\frac{s}{\log (s_t+1)}\geqslant\frac{s}{\log s}.$$
Finally, we show that
$$\{ u_1n+v_1, \dots , u_{s_t}n+v_{s_t}\}$$
is an admissible set of linear functions.
For any positive integer $i\le t$, since
$$\{ \ell_{i,1},\ell_{i,2},\dots ,\ell_{i,s_t} \} \subseteq \{ \ell_{i,1},\ell_{i,2},\dots ,\ell_{i,s_i}
\} ,$$ it follows that
$$ \ell_{i,1},\ell_{i,2},\dots ,\ell_{i,s_t}  $$
occupy at most $p_i-1$ of the residue classes modulo $p_i$. For
$i>t$, we have $p_i\ge p_{t+1}>s_t$ and $$ \ell_{i,1},\ell_{i,2},\dots
,\ell_{i,s_t}  $$ occupy at most $p_i-1$ of the residue classes
modulo $p_i$. In all cases, there exists an integer $m_i$ such
that
$$m_i+\ell_{i,j}\not\equiv 0\pmod{p_i},\quad 1\le j\le s_t.$$
If $p_i\mid u_j$, then by $(u_j, v_j)=1$, we have $p_i\nmid v_j$.
Thus, $p_i\nmid u_jm_i+v_j$.  If $p_i\nmid u_j$, then
$$u_jm_i+v_j\equiv u_jm_i+u_{i,j} u_j v_j\equiv u_j
(m_i+\ell_{i,j}) \not\equiv 0\pmod{p_i}.$$ In all cases, we have
$$u_jm_i+v_j\not\equiv 0\pmod{p_i},\quad 1\le j\le s_t.$$
That is,
$$p_i\nmid (u_1m_i+v_1)\cdots (u_{s_t}m_i+v_{s_t}).$$
Therefore, $$\{ u_1n+v_1, \dots , u_{s_t}n+v_{s_t}\}$$ is an
admissible set of linear functions.

This completes the proof of Lemma \ref{CD1}.
\end{proof}

In order to prove Theorem \ref{M}, we will employ a deep result of
Maynard \cite{Ma2}.  For convenience of the readers, we use the
same symbols used by Maynard as far as possible (but there are
some minor adjustments of them). Let $\mathcal{A}$ be a subset of
integers and $\mathcal{P}$ a subset of primes. Let $\mathcal{L}=\{
u_in+v_i : 1\leqslant i\leqslant k\}$ and $L_i (n)=u_in+v_i$
$(1\leqslant i\leqslant k)$ and let $\varphi(n)$ be the Euler
totient function. Write
$$\mathcal{A}[x]=\#\{n\in\mathcal{A}:x\leqslant n<2x\},$$
$$\mathcal{A}[x;q,a]=\#\{n\in\mathcal{A}:x\leqslant n<2x,n\equiv a\pmod{q}\},$$
$$L_i(\mathcal{A})=\{L_i(n):n\in \mathcal{A}\},~~\varphi_{L_i}(q)=\varphi(qu_i)/\varphi(u_i),$$
$$\mathcal{P}_{L_i,\mathcal{A}}[x]=\#\{u_in+v_i\in  \mathcal{P}: x\leqslant n<2x,n\in\mathcal{A}\},$$
$$\mathcal{P}_{L_i,\mathcal{A}}[x;q,a]=\#\{ u_in+v_i\in\mathcal{P}: x\leqslant n<2x,n\equiv a\pmod{q},n\in\mathcal{A}\}.$$
Maynard \cite{Ma2} introduced the following {\bf Hypothesis 1} for
well-distributed sets.

\begin{Hypothesis}[$\mathcal{A},\mathcal{P},\mathcal{L},B,x, \theta$]\label{Hy} Let $k=\#\mathcal{L}$.\\
$\mathrm{(1)}~~\mathcal{A}$ is well--distributed in arithmetic progressions: we have
$$\sum_{q\leqslant x^{\theta}}\max_{a}\left|\mathcal{A}[x;q,a]-\frac{\mathcal{A}[x]}{q}\right|\ll \frac{\mathcal{A}[x]}{(\log x)^{100k^2}}.$$
$\mathrm{(2)}$ Primes in $L_i(\mathcal{A})\bigcap \mathcal{P}$ are well distributed in most arithmetic progressions: for any $L_i(n)\in\mathcal{L}$ we have
$$\sum_{\substack{q\leqslant x^{\theta}\\(q,B)=1}}\max_{(L_i(a),q)=1}\left|\mathcal{P}_{L_i,\mathcal{A}}[x;q,a]-\frac{\mathcal{P}_{L_i,\mathcal{A}}[x]}{\varphi_{L_i}(q)}\right|\ll \frac{\mathcal{P}_{L_i,\mathcal{A}}[x]}{(\log x)^{100k^2}}.$$
$\mathrm{(3)}~~\mathcal{A}$ is not too concentrated in any arithmetic progression: for any $q<x^{\theta}$ we have
$$\mathcal{A}[x;q,a]\ll \frac{\mathcal{A}[x]}{q}.$$
\end{Hypothesis}

Based on the above {\bf Hypothesis 1}, Maynard \cite{Ma2}
established the following theorem.

\begin{theorem}[\cite{Ma2}, Theorem 3.1]\label{Maynard} Let $\alpha>0$ and $0<\theta<1$.
Let $x$, $B$  be integers and let $\mathcal{L} =\{ u_1n+v_1,\dots
, u_kn+v_k \}$ be an admissible set of linear functions. Suppose
that $k\leqslant (\log x)^{\alpha}$, $0\leqslant u_i,v_i\leqslant
x^{\alpha}$ for all $1\leqslant i\leqslant k$ and $1\leqslant
B\leqslant x^{\alpha}$. Then there is a constant $C$ depending
only on $\alpha$ and $\theta$ such that the following holds: If
$k\geqslant C$ and
$(\mathcal{A},\mathcal{P},\mathcal{L},B,x,\theta)$ satisfies {\bf
Hypothesis 1}, and if $\delta>(\log k)^{-1}$ is such that
$$\frac{1}{k}\frac{\varphi(B)}{B}\sum_{i=1}^{k}\frac{\varphi(u_i)}{u_i}\mathcal{P}_{L_i,\mathcal{A}}[x]
\geqslant \delta\frac{\mathcal{A}[x]}{\log x},$$
then
$$\#\{n\in \mathcal{A}: x\leqslant n<2x,|\{ L_1(n),L_2(n),...,L_k(n)\}\cap\mathcal{P}|\geqslant C^{-1}\delta \log k\}
\gg \frac{\mathcal{A}[x]}{(\log x)^ke^{Ck}} .$$
\end{theorem}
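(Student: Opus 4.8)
The plan is to run Maynard's multidimensional GPY sieve against $\mathcal{A}$ and then convert the positivity of the resulting sieve sum into a \emph{count}, rather than stopping at an existence statement. First I would fix $R=x^{\theta/2}$ and a fixed smooth $F\colon[0,\infty)^{k}\to\mathbb{R}$ supported on the simplex $\Delta_{k}=\{(t_{1},\dots ,t_{k}):t_{i}\geqslant 0,\ t_{1}+\cdots +t_{k}\leqslant 1\}$, attach to $F$ the usual divisor weights $\lambda_{d_{1},\dots ,d_{k}}$ supported on squarefree $k$-tuples with $(d_{i},B)=1$ and $d_{1}\cdots d_{k}<R$, and set, for $n\in\mathcal{A}$ with $x\leqslant n<2x$,
$$w_{n}=\Big(\sum_{d_{i}\mid L_{i}(n)\ (1\leqslant i\leqslant k)}\lambda_{d_{1},\dots ,d_{k}}\Big)^{2}.$$
With $S_{1}=\sum_{n}w_{n}$ and $S_{2}=\sum_{i=1}^{k}\sum_{n}\mathbf{1}_{\mathcal{P}}(L_{i}(n))w_{n}$ (both sums over $x\leqslant n<2x$, $n\in\mathcal{A}$), a positive value of $S_{2}-\rho S_{1}$ forces some such $n$ to have more than $\rho$ of the $L_{i}(n)$ in $\mathcal{P}$; the object is to push $\rho$ up to size $\asymp\delta\log k$ while keeping a positive surplus.

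Next I would carry out the asymptotic evaluation of $S_{1}$ and $S_{2}$. Expanding the square and interchanging summations reduces $S_{1}$ to sums of $\mathcal{A}[x;q,a]$ over $q\leqslant R^{2}=x^{\theta}$; Hypothesis~1(1) replaces $\mathcal{A}[x;q,a]$ by $\mathcal{A}[x]/q$ with admissible total error, Hypothesis~1(3) controls the diagonal, and admissibility of $\mathcal{L}$ keeps the singular series $\mathfrak{S}(\mathcal{L})$ positive, so that $S_{1}=\big(1+o(1)\big)\,c_{1}(k,B)\,\mathfrak{S}(\mathcal{L})\,\mathcal{A}[x]\,(\log R)^{k}\,I_{k}(F)$, with $I_{k}(F)$ the usual integral of $F^{2}$ over $\Delta_{k}$. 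Likewise, expanding $w_{n}$ inside $\sum_{n}\mathbf{1}_{\mathcal{P}}(L_{i}(n))w_{n}$ and applying Hypothesis~1(2) (precisely the equidistribution of primes in $L_{i}(\mathcal{A})\cap\mathcal{P}$ to level $x^{\theta}$) yields, uniformly in $i$, a main term $\big(1+o(1)\big)\,c_{2}(k,B)\,\mathfrak{S}(\mathcal{L})\,\tfrac{\varphi(u_{i})}{u_{i}}\,\mathcal{P}_{L_{i},\mathcal{A}}[x]\,(\log R)^{k+1}\,J_{k}^{(i)}(F)$, where $J_{k}^{(i)}(F)$ is the usual Maynard functional $\int(\int F\,dt_{i})^{2}$ over the simplex. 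Summing over $i$ and inserting the hypothesis $\tfrac1k\tfrac{\varphi(B)}{B}\sum_{i}\tfrac{\varphi(u_{i})}{u_{i}}\mathcal{P}_{L_{i},\mathcal{A}}[x]\geqslant\delta\,\mathcal{A}[x]/\log x$ gives, schematically, $S_{2}\geqslant\big(1+o(1)\big)\,c_{3}\,\delta\,\tfrac{\log R}{\log x}\,\tfrac{\sum_{i}J_{k}^{(i)}(F)}{I_{k}(F)}\,S_{1}$. The relevant point is that after expansion there are only $(\log x)^{O(k)}$ terms and the divisor sums contribute at most $x^{o(1)}$, so the relative error $(\log x)^{-100k^{2}}$ permitted by Hypothesis~1 is genuinely negligible — that is what the exponent $100k^{2}$ buys.

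The analytic heart, and the step I expect to be the main obstacle, is the choice of $F$: one must exhibit a smooth $F$ with $\sum_{i=1}^{k}J_{k}^{(i)}(F)/I_{k}(F)\gg\log k$, not merely $\gg 1$. This is Maynard's decisive advance over Goldston--Pintz--Y{\i}ld{\i}r{\i}m. The plan would be to take a symmetric $F$ built from a univariate profile, say $F(t_{1},\dots ,t_{k})=\prod_{i=1}^{k}g(kt_{i})$ with $g$ decreasing and compactly supported (as in \cite{Ma2}), compute $I_{k}(F)$ and the $J_{k}^{(i)}(F)$ by Beta-type integrals over the simplex, and show that their ratio grows like $\log k$; a Rayleigh-quotient argument then shows this order is essentially optimal. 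Granting this, choose $\rho=C^{-1}\delta\log k$ with $C=C(\alpha,\theta)$ large enough to absorb the constants from the two asymptotics above and the implied constant in the $\log k$ bound (using also $k\geqslant C$ and $\delta\log k>1$ from the hypotheses); then $S_{2}-\rho S_{1}\geqslant c\,S_{1}>0$ for some $c=c(\alpha,\theta)>0$.

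Finally I would upgrade positivity to the claimed count. Let $N$ be the number of $n\in\mathcal{A}$, $x\leqslant n<2x$, with $|\{L_{1}(n),\dots ,L_{k}(n)\}\cap\mathcal{P}|\geqslant\rho$. Since $w_{n}\geqslant 0$, since $\sum_{i}\mathbf{1}_{\mathcal{P}}(L_{i}(n))-\rho<k$ always, and since this quantity is $\leqslant 0$ unless $n$ is counted by $N$,
$$c\,S_{1}\leqslant S_{2}-\rho S_{1}=\sum_{\substack{x\leqslant n<2x\\ n\in\mathcal{A}}}w_{n}\Big(\sum_{i=1}^{k}\mathbf{1}_{\mathcal{P}}(L_{i}(n))-\rho\Big)\leqslant k\cdot N\cdot\max_{x\leqslant n<2x}w_{n}.$$
It remains to bound the weights without losing a power of $x$. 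The crude pointwise bound $w_{n}\ll(\log R)^{2k}\prod_{i=1}^{k}9^{\omega(L_{i}(n))}$ is too weak, so I would instead either discard the sparse set of $n$ for which $\prod_{i=1}^{k}L_{i}(n)$ has more than $Ck$ prime factors below a suitable threshold — on the complement $w_{n}\ll(\log R)^{2k}e^{O(k)}$ — or, with $R$ taken to be a small fixed power of $x$, prove a second-moment estimate $\sum_{n}w_{n}^{2}\ll\mathcal{A}[x](\log R)^{2k}e^{O(k)}$ and apply Cauchy--Schwarz. Either way, combined with $S_{1}\gg\mathcal{A}[x](\log R)^{k}e^{-O(k)}$ and $\log R\asymp\log x$, the displayed inequality yields $N\gg\mathcal{A}[x]\big/\big((\log x)^{k}e^{Ck}\big)$ after enlarging $C$, as required. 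The full details are carried out in \cite{Ma2}; the above only lays out the architecture.
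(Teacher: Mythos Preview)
The paper does not prove this statement at all: Theorem~\ref{Maynard} is simply quoted from Maynard~\cite{Ma2} and used as a black box, so there is nothing in the paper to compare your argument against. What you have written is a faithful outline of Maynard's own proof in~\cite{Ma2}---the multidimensional GPY weights, the asymptotic evaluation of $S_1$ and $S_2$ via Hypothesis~1, the choice of $F$ achieving $\sum_i J_k^{(i)}(F)/I_k(F)\gg\log k$, and the upgrade from positivity to a count by restricting to $n$ for which $\prod_i L_i(n)$ has few small prime factors so that $w_n\ll(\log R)^{2k}e^{O(k)}$.

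One minor imprecision: in the error analysis the divisor sums that arise are of size $(\log x)^{O(k^2)}$, not $(\log x)^{O(k)}$ as you wrote, which is exactly why the exponent $100k^2$ (rather than something linear in $k$) is needed in Hypothesis~1. This does not affect the validity of the sketch.
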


By employing Lemma \ref{CD1}, we can remove the condition that
$\mathcal{L} =\{ u_1n+v_1,\dots , u_kn+v_k \}$ is an admissible
set of linear functions in Theorem \ref{Maynard}. That is,

\begin{theorem}\label{Maynard1} Let $\alpha>0$ and $0<\theta<1$.
Let $x$, $B$  be integers and let $\mathcal{L} =\{ u_1n+v_1,\dots
, u_kn+v_k \}$ be a set of linear functions with $(u_i, v_i)=1$
for all $i$. Suppose that $k\leqslant (\log x)^{\alpha}$,
$0\leqslant u_i,v_i\leqslant x^{\alpha}$ for all $1\leqslant
i\leqslant k$ and $1\leqslant B\leqslant x^{\alpha}$. Then there
is a positive constants $C_1$ depending only on
$\alpha$ and $\theta$ such that the following holds: If
$k\geqslant C_1$ and
$(\mathcal{A},\mathcal{P},\mathcal{L},B,x,\theta)$ satisfies {\bf
Hypothesis 1}, and if $\delta>2(\log k)^{-1}$ is such that
$$\frac{\varphi(B)\varphi(u_i)}{Bu_i}\mathcal{P}_{L_i,\mathcal{A}}[x]
\geqslant \delta\frac{\mathcal{A}[x]}{\log x}$$
for any $1\leqslant i\leqslant k$, then
$$\#\{n\in \mathcal{A}: x\leqslant n<2x,|\{ L_1(n),L_2(n),...,L_k(n)\}\cap\mathcal{P}|\geqslant C_1^{-1}\delta\log k\}
\gg \frac{\mathcal{A}[x]}{e^{C_1 (\log x)^\alpha }} .$$
Actually, we have the following more explicit bound
$$\#\{n\in \mathcal{A}: x\leqslant n<2x,|\{ L_1(n),L_2(n),...,L_k(n)\}\cap\mathcal{P}|\geqslant C_1^{-1}\delta\log k\}
\gg \frac{\mathcal{A}[x]}{(\log x)^ke^{C_1k}} .$$
\end{theorem}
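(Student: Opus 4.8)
The plan is to reduce Theorem \ref{Maynard1} to Maynard's Theorem \ref{Maynard} by replacing $\mathcal{L}$ with a suitable admissible subsystem extracted via Lemma \ref{CD1}. Let $C=C(\alpha,\theta)$ be the constant furnished by Theorem \ref{Maynard} and let $c_0>0$ be the absolute implied constant in Lemma \ref{CD1}; we will choose $C_1$ large in terms of $C$ and $c_0$. Given $\mathcal{L}=\{u_1n+v_1,\dots,u_kn+v_k\}$ with $k\geqslant C_1$, we may assume $u_i\geqslant 1$ for every $i$ (if $u_i=0$ then $(u_i,v_i)=1$ forces $v_i=1$, so $L_i\equiv 1$ never represents a prime and can be discarded). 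Since $k\geqslant 5$, Lemma \ref{CD1} produces an admissible sub-collection of $\mathcal{L}$ of size $\geqslant c_0 k/\log k$; as every subset of an admissible set of linear functions is again admissible, we may fix an index set $I'\subseteq\{1,\dots,k\}$ with
$$k':=\#I'=\Big\lceil \frac{c_0 k}{\log k}\Big\rceil,\qquad \mathcal{L}':=\{u_in+v_i:i\in I'\}\ \text{admissible.}$$
Because $t\mapsto t/\log t$ is increasing, the bound $k\leqslant(\log x)^\alpha$ upgrades here to the sharper $k'\ll(\log x)^\alpha/\log\log x$; this is exactly what will later turn Maynard's $(\log x)^{k'}$ loss into the asserted factor $\exp(C_1(\log x)^\alpha)$. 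Also $k'\geqslant\sqrt k$ for $k$ large, so $\log k'\geqslant\tfrac12\log k$, while $C\leqslant k'\leqslant k$ once $C_1$ is large enough.

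Next I would check that $(\mathcal{A},\mathcal{P},\mathcal{L}',B,x,\theta)$ meets all the hypotheses of Theorem \ref{Maynard}. Admissibility of $\mathcal{L}'$ holds by construction, and the remaining size and range conditions ($k'\leqslant(\log x)^\alpha$; $0\leqslant u_i,v_i\leqslant x^\alpha$ for $i\in I'$; $1\leqslant B\leqslant x^\alpha$; $k'\geqslant C$) are immediate. For {\bf Hypothesis 1}, note that $k'\leqslant k$ gives $(\log x)^{100k'^2}\leqslant(\log x)^{100k^2}$, so the required error bounds in parts (1) and (2) for $\mathcal{L}'$ (part (2) being quantified only over $L_i\in\mathcal{L}'\subseteq\mathcal{L}$) are weaker than—hence implied by—those assumed for $\mathcal{L}$, while part (3) does not involve the system. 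Finally $\delta>2(\log k)^{-1}\geqslant(\log k')^{-1}$, and averaging the hypothesis $\varphi(B)\varphi(u_i)\mathcal{P}_{L_i,\mathcal{A}}[x]/(Bu_i)\geqslant\delta\,\mathcal{A}[x]/\log x$ over the $k'$ indices $i\in I'$ gives
$$\frac{1}{k'}\frac{\varphi(B)}{B}\sum_{i\in I'}\frac{\varphi(u_i)}{u_i}\mathcal{P}_{L_i,\mathcal{A}}[x]\geqslant\delta\frac{\mathcal{A}[x]}{\log x},$$
which is the mean-value hypothesis of Theorem \ref{Maynard} with the same $\delta$ (admissible since $\delta>(\log k')^{-1}$).

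Applying Theorem \ref{Maynard} to $\mathcal{L}'$ then yields
$$\#\Big\{n\in\mathcal{A}:x\leqslant n<2x,\ \big|\{L_i(n):i\in I'\}\cap\mathcal{P}\big|\geqslant C^{-1}\delta\log k'\Big\}\gg\frac{\mathcal{A}[x]}{(\log x)^{k'}e^{Ck'}}.$$
Passing back to $\mathcal{L}$: since $\{L_i(n):i\in I'\}\subseteq\{L_1(n),\dots,L_k(n)\}$ and $C^{-1}\delta\log k'\geqslant(2C)^{-1}\delta\log k\geqslant C_1^{-1}\delta\log k$ once $C_1\geqslant 2C$, the event counted above forces $|\{L_1(n),\dots,L_k(n)\}\cap\mathcal{P}|\geqslant C_1^{-1}\delta\log k$, so the displayed lower bound survives for the count in the theorem. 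For the explicit bound one bounds $(\log x)^{k'}e^{Ck'}\leqslant(\log x)^{k}e^{C_1 k}$ via $k'\leqslant k$ and $C_1\geqslant C$; for the first bound one instead uses $k'=\lceil c_0 k/\log k\rceil$ and the monotonicity of $t/\log t$ to obtain $(\log x)^{k'}e^{Ck'}=\exp(k'(\log\log x+C))\leqslant\exp(C_1(\log x)^\alpha)$ for all sufficiently large $x$, after one last enlargement of $C_1$. I expect the only genuinely delicate point to be this final calibration of $C_1$: one must verify that the $\gg k/\log k$ gain of Lemma \ref{CD1} precisely cancels the $(\log x)^{k'}$ loss of Theorem \ref{Maynard} in the extreme case $k\asymp(\log x)^\alpha$, and that the factor-$2$ slack built into the hypotheses ($\delta>2(\log k)^{-1}$ and $k\geqslant C_1$) covers the descent from $k$ to $k'$ both in the allowed range of $\delta$ and in the $\log k$ of the cluster threshold.
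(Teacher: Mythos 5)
Your proposal is correct and follows essentially the same route as the paper: extract an admissible subsystem of size $\lceil c_0k/\log k\rceil$ via Lemma \ref{CD1}, observe that Hypothesis 1 and the (averaged) mean-value condition descend to the subsystem, apply Theorem \ref{Maynard}, and then calibrate $C_1$ so that the $\log k'$ versus $\log k$ loss and the factor $(\log x)^{k'}e^{Ck'}$ are absorbed, exactly as in the paper's proof. Your extra remarks (discarding a possible $u_i=0$ function and noting $k'\geqslant\sqrt{k}$ so $\delta>2(\log k)^{-1}>(\log k')^{-1}$) are fine and only make explicit points the paper treats implicitly.
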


\begin{proof}
Let $C$ be as in Theorem \ref{Maynard}.
By Lemma \ref{CD1}, there are at least $C'{k}/{\log k}$ of
$u_1n+v_1,\dots , u_kn+v_k$ constituting an admissible set of
linear functions, where $C'$ is an absolute positive constant. Let
$$\ell =\left\lceil C'\frac{k}{\log k} \right\rceil .$$
We take an admissible set of $\ell $ linear functions
$$\{u_{i_1}n+v_{i_1},\dots, u_{i_\ell}n+v_{i_\ell}\}$$
from
$u_1n+v_1,\dots , u_kn+v_k$.
Since $$\frac{\varphi(B)\varphi(u_i)}{Bu_i}\mathcal{P}_{L_i,\mathcal{A}}[x]
\geqslant \delta\frac{\mathcal{A}[x]}{\log x}$$
for any  $1\leqslant i\leqslant k$, it follows that
$$\frac{1}{\ell}\frac{\varphi(B)}{B}\sum_{j=1}^{\ell}\frac{\varphi(u_{i_{j}})}{u_{i_j}}\mathcal{P}_{L_i,\mathcal{A}}[x]
\geqslant \delta\frac{\mathcal{A}[x]}{\log x}.$$
Recall that $k\le (\log x)^\alpha $, we
have
$$ \frac{k}{\log k}\le \alpha^{-1} \frac{(\log x)^\alpha}{\log\log
x} $$ for $k\ge 3$. We take a constant $C_1\geqslant\max\{5,C\}$ such that for all
$k\ge C_1$, we have $\ell \ge C$,
$$\log \ell \ge \log \left( C' \frac{k}{\log k} \right) >\frac 12 \log k,$$
$$C^{-1}\delta\log \ell\ge C^{-1}\delta \frac{1}{2}\log k\ge C_1^{-1}\delta\log k$$
and
\begin{eqnarray*}(\log x)^\ell e^{C\ell }=\exp (\ell \log \log x +C\ell )&\le & \exp
\left( 2C'\frac{k}{\log k} \log\log x+2CC'\frac{k}{\log k}
\right)\\
&\le & \exp (C_1(\log x)^\alpha ).\end{eqnarray*} By $\ell \le k$, we have
$$(\log x)^\ell e^{C\ell }\le (\log x)^k e^{Ck}\le (\log x)^k e^{C_1k}.$$
Theorem \ref{Maynard1} now follows from Theorem \ref{Maynard} by noting
$$\delta>2(\log k)^{-1}>(\log \ell)^{-1}.$$
\end{proof}

As mentioned by Maynard \cite{Ma2}, Theorem \ref{Maynard} (hence Theorem \ref{Maynard1}) can apply to vary sparse sets $\mathcal{A}$ and for such sets the major obstacle is in establishing  {\bf Hypothesis 1}. Maynard \cite{Ma2} (in his proof of Theorem 3.2 therein) illustrated that $\mathcal{A}=\mathbb{Z}$ with $u_i=1~(1\leqslant i\leqslant k)$ is applicable. In next proposition, we shall follow the idea of Maynard and give an extension of his argument by choosing $u_i$ from a larger range.

For any integer $n>1$, let $P(n)$ be the largest prime factor of
$n$.

\begin{proposition}\label{Pro} Suppose that $0<\alpha<1/4$, $\beta >0$ and $\mathcal{L}=\{ u_1n+v_1, \dots , u_kn+v_k\}$
is a set of linear functions with $k\le (\log x)^\alpha$ and
$(u_i, v_i)=1$ such that
$$1\leqslant u_i\leqslant (\log x)^{\beta },\quad P(u_i)<0.9
\log\log x, \quad 0\leqslant v_i\leqslant x^{\alpha }$$ for all
$1\leqslant i\leqslant k$. Then for any sufficiently large $x$,
there is an integer $B$ satisfying
$$\frac{\varphi(B)}{B}=1+O\left(\frac{1}{\log\log
x}\right)$$ such that {\bf Hypothesis 1} is satisfied for
$[\mathcal{A},\mathcal{P},\mathcal{L},B,x,\theta]$ with
$$\mathcal{A}=\mathbb{Z}, \quad \mathcal{P}=\mathbb{P}, \quad \theta=\frac{1}{3}.$$
\end{proposition}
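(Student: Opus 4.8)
The plan is to take $B=1$, for which $\varphi(B)/B=1=1+O(1/\log\log x)$ holds trivially, and to verify the three clauses of {\bf Hypothesis 1} for $[\mathbb{Z},\mathbb{P},\mathcal{L},1,x,1/3]$ one at a time, following Maynard's treatment of the case $u_i=1$ and supplying the extra bookkeeping needed when $u_i>1$. Clauses (1) and (3) should be essentially immediate: with $\mathcal{A}=\mathbb{Z}$ one has $\mathcal{A}[x]=x+O(1)$ and $\mathcal{A}[x;q,a]=x/q+O(1)$, so $\mathcal{A}[x;q,a]\ll\mathcal{A}[x]/q$ for every $q<x^{1/3}$, while $\sum_{q\le x^{1/3}}\max_a|\mathcal{A}[x;q,a]-\mathcal{A}[x]/q|\ll x^{1/3}$. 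Since $k\le(\log x)^{\alpha}$ with $\alpha<1/4$, we have $100k^{2}\le 100(\log x)^{2\alpha}$ with $2\alpha<1$, hence $(\log x)^{100k^{2}}=\exp\big(O((\log x)^{2\alpha}\log\log x)\big)=\exp(o(\log x))$, so that $x^{1/3}\ll x/(\log x)^{100k^{2}}$, which is clause (1).

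The substance is clause (2), which I would establish through the Bombieri--Vinogradov theorem. Fix $i$ and set $I_i=[u_ix+v_i,\, 2u_ix+v_i)$, an interval of length $u_ix\le x(\log x)^{\beta}$. The substitution $p=u_in+v_i$ is a bijection between primes $p\in I_i$ with $p\equiv u_ia+v_i\pmod{u_iq}$ and primes of the form $u_in+v_i$ with $x\le n<2x$ and $n\equiv a\pmod q$, and $(L_i(a),q)=1$ together with $(u_i,v_i)=1$ forces $(u_ia+v_i,u_iq)=1$; hence
$$\mathcal{P}_{L_i,\mathbb{Z}}[x;q,a]=\pi(2u_ix+v_i;u_iq,u_ia+v_i)-\pi(u_ix+v_i;u_iq,u_ia+v_i).$$
Taking $q=1$ and using that $u_i\le(\log x)^{\beta}$ is a fixed power of $\log x$, the Siegel--Walfisz theorem gives
$$\mathcal{P}_{L_i,\mathbb{Z}}[x]=\frac{\pi(2u_ix+v_i)-\pi(u_ix+v_i)}{\varphi(u_i)}+O\big(x\exp(-c\sqrt{\log x})\big)\gg\frac{x}{\log x}.$$
Since $\varphi_{L_i}(q)=\varphi(u_iq)/\varphi(u_i)$, this display also shows that the intended main term $\mathcal{P}_{L_i,\mathbb{Z}}[x]/\varphi_{L_i}(q)$ equals $\big(\pi(2u_ix+v_i)-\pi(u_ix+v_i)\big)/\varphi(u_iq)$ up to $O\big(x\exp(-c\sqrt{\log x})/\varphi_{L_i}(q)\big)$, for every $q$, whether or not $(q,u_i)=1$, and $\sum_{q\le x^{1/3}}1/\varphi_{L_i}(q)\le\sum_{q\le x^{1/3}}1/\varphi(q)\ll\log x$.

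Assembling these reductions, after the injective change of variable $q\mapsto u_iq$ (so that the moduli run only up to $u_ix^{1/3}\le(\log x)^{\beta}x^{1/3}\le x^{2/5}$) the left-hand side of Hypothesis 1(2) is bounded by a Bombieri--Vinogradov sum over moduli $\le x^{2/5}$ together with the main-term discrepancy recorded above. I would then invoke the Bombieri--Vinogradov theorem in a quantitative form valid for moduli up to $x^{1/2-\varepsilon}$ — that is, $\sum_{Q\le x^{1/2-\varepsilon}}\max_{(b,Q)=1}\max_{y\le X}\big|\pi(y;Q,b)-\pi(y)/\varphi(Q)\big|\ll X\exp(-c\sqrt{\log X})$ with $X\ll x(\log x)^{\beta}$ — to deduce that the whole expression is $\ll x\exp(-c'\sqrt{\log x})$. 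Since $\mathcal{P}_{L_i,\mathbb{Z}}[x]\gg x/\log x$ and $(\log x)^{100k^{2}}=\exp\big(O((\log x)^{2\alpha}\log\log x)\big)$ with $2\alpha<1/2$, one has $(\log x)^{2\alpha}\log\log x=o(\sqrt{\log x})$, whence $x\exp(-c'\sqrt{\log x})\ll\mathcal{P}_{L_i,\mathbb{Z}}[x]/(\log x)^{100k^{2}}$ for all large $x$; this is clause (2), and the verification is complete.

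I expect clause (2) to be the only genuine obstacle, and two of its features need care. First, one must use a quantitative Bombieri--Vinogradov estimate with the explicit saving $\exp(-c\sqrt{\log x})$: the classical form, with an $A$-dependent implied constant and a $(\log x)^{-A}$ saving, does not suffice, because here the exponent $100k^{2}$ is allowed to grow with $x$. Second, one must check that the modulus $u_iq$ never leaves the admissible range and that $\mathcal{P}_{L_i,\mathbb{Z}}[x]/\varphi_{L_i}(q)$ really is the correct main term even when $(q,u_i)>1$. The hypothesis $\alpha<1/4$ enters precisely at the final comparison: it forces $2\alpha<1/2$, so $(\log x)^{100k^{2}}$ is subexponential in $\log x$ and is therefore dominated by the Bombieri--Vinogradov error term.
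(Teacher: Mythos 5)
The parts of your argument for clauses (1) and (3), the rewriting of $\mathcal{P}_{L_i,\mathbb{Z}}[x;q,a]$ as a difference of $\pi(\cdot;u_iq,L_i(a))$, the observation that $(u_i,v_i)=1$ forces $(L_i(a),u_iq)=1$, and the use of Siegel--Walfisz to handle the factor $\varphi_{L_i}(q)=\varphi(u_iq)/\varphi(u_i)$ all match the paper and are fine, as is your remark that $\alpha<\tfrac14$ makes $(\log x)^{100k^2}$ subexponential. The gap is the analytic input you invoke for clause (2): the ``quantitative Bombieri--Vinogradov theorem'' with the bound $\sum_{Q\le x^{1/2-\varepsilon}}\max_{(b,Q)=1}\max_{y\le X}\bigl|\pi(y;Q,b)-\pi(y)/\varphi(Q)\bigr|\ll X\exp(-c\sqrt{\log X})$ is not a theorem. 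The $(\log x)^{-A}$ saving in the classical statement is not a cosmetic defect one can upgrade at will; it is forced by the possible existence of a Landau--Siegel zero for some modulus $q_0\le\exp(c\sqrt{\log x})$. For that single modulus (and its multiples) current knowledge does not exclude a discrepancy of order $x/\varphi(q_0)$, which already destroys the bound you assert. So with your choice $B=1$ the verification of Hypothesis 1(2) cannot be completed by any known unconditional estimate; you correctly noticed that the classical form does not suffice, but then assumed exactly the strengthening that is out of reach.

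This is precisely why the proposition (and Maynard's Hypothesis 1(2)) allows a nontrivial $B$ with the coprimality condition $(q,B)=1$, and the paper's proof is built around exploiting it. There, one applies the Landau--Page theorem: there is at most one real primitive character of modulus $q_0\le e^{c_1\sqrt{\log x}}$ whose $L$-function has a zero exceeding $1-c_2(\log x)^{-1/2}$, and one sets $B=P(q_0)$ (and $B=1$ otherwise). Since $q_0\gg\log x/(\log\log x)^4$ and $q_0$ is squarefree up to a factor $4$, one gets $0.9\log\log x<B\le e^{c_1\sqrt{\log x}}$, which gives both $\varphi(B)/B=1+O(1/\log\log x)$ and $(B,u_i)=1$ (because $P(u_i)<0.9\log\log x$, so the passage from moduli $qu_i$ back to the constraint $(q,B)=1$ is legitimate). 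Then the Bombieri--Vinogradov machinery (the large-sieve estimate of Davenport, Ch.~28) is run with the splitting point $Q_1=e^{c_1\sqrt{\log x}}$: for $q>Q_1$ the large sieve gives the saving $x(\log x)^{O(1)}/Q_1$, and for $q\le Q_1$ the condition $(q,B)=1$ guarantees there is no exceptional zero, so $|\psi'(y,\chi)|\ll x e^{-c\sqrt{\log x}}$ uniformly; together these yield $S_{11},S_{21}\ll xe^{-c_{11}\sqrt{\log x}}$, which beats $(\log x)^{100k^2}$ since $2\alpha<\tfrac12$. If you want to salvage your write-up, replace the fictitious BV statement by this construction of $B$ and the split at $Q_1=e^{c_1\sqrt{\log x}}$; the rest of your reductions can stay as they are.
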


\begin{proof}
I. {\it Verification of condition} (3) in {\bf Hypothesis 1.}\\
It is trivial since for $q<x^\theta$,
$$\mathcal{A}[x;q,a]=\frac{x}{q}+O(1)\ll\frac{\mathcal{A}[x]}{q}.$$
II. {\it Verification of condition} (1) in {\bf Hypothesis 1.}\\
The first one is also easy to verify because
$$\sum_{q\leqslant x^{\theta}}\max_{a}\left|\mathcal{A}[x;q,a]-\frac{\mathcal{A}[x]}{q}\right|\ll \sum_{q\leqslant x^{1/3}}1\ll \frac{\mathcal{A}[x]}{(\log x)^{100k^2}}.$$
III. {\it Verification of condition} (2) in {\bf Hypothesis 1.}\\
For a large $x$, there is at most one real primitive
character $\widetilde{\chi}$ to a modulus $q_0\leqslant
e^{c_1\sqrt{\log x}}$ such that the corresponding $L$--function
$L(s,\widetilde{\chi})$ has a real zero larger than $1-c_2(\log
x)^{-1/2}$ via the Landau--Page theorem (see for example
\cite[Page 95]{Da}), where $c_1$ and $c_2$ are two  suitable
absolutely positive constants which will be given later. With this
fact, we take
\begin{equation*}
B=
\begin{cases}
P(q_0),~~~&\text{if~such~an~exceptional~zero~does~exist,}\\
1,~~~&\text{otherwise.}
\end{cases}
\end{equation*}
This exceptional zero if exists, its modulus $q_0$ should satisfy
$q_0(\log q_0)^4\gg\log x$ (see for example \cite[Page 124]{Da}),
which means that $q_0\gg \log x/(\log\log x)^4$. Moreover, if such
an exceptional zero does exist, its modulus $q_0$ is square--free apart
from a possible factor of at most $4$, otherwise
$\widetilde{\chi}$ would not be primitive (see for example
\cite[Page 40]{Da}). In view of these facts, on one aspect we have
$$B\leqslant q_0\leqslant e^{c_1\sqrt{\log x}}.$$
On the other aspect, let $q_0'=q_0/(q_0,4)$, then $q_0'$ is a
square--free number satisfying $B=P(q_0')$ and $q_0'\gg \log
x/(\log\log x)^4$. From which we know that $B>0.9 \log\log x$,
otherwise we shall have
$$q_0'\leqslant \prod_{p\leqslant 0.9\log\log x}p=\exp \left(\sum_{p\leqslant 0.9\log\log x}\log p\right) <(\log x)^{0.99},$$
which is certainly a contradiction with $q_0'\gg \log x/(\log\log
x)^4$ for sufficiently large $x$. Thus we can conclude that
$0.9\log\log x<B\leqslant e^{c_1\sqrt{\log x}}.$ It follows that
$$\frac{\varphi(B)}{B}=1+O\left(\frac{1}{\log\log
x}\right).$$ Let $\pi(X)$ be the the number of primes below $X$
and
$$\pi(X;m,\ell)=\sum_{\substack{p\leqslant X,~~p\in\mathbb{P} \\ p\equiv
\ell\pmod{m}}}1.$$ Suppose that $(L_i(a),q)=1$. Then
$(L_i(a),qu_i)=1$. We have
\begin{align*}
\mathcal{P}_{L_i,\mathcal{A}}[x;q,a]&=\sum_{\substack{x\leqslant n<2x\\ n\equiv a\pmod{q}\\ u_in+v_i\in \mathbb{P}}}1
=\sum_{\substack{u_ix+v_i\leqslant m< 2u_ix+v_i\\m\in \mathbb{P}\\m\equiv L_i(a)\pmod{qu_i}}}1\\
&=\pi(2u_ix+v_i; qu_i,L_i(a))-\pi(u_ix+v_i;qu_i,L_i(a))+r_{x,i}
\end{align*}
 and
\begin{align*}
\mathcal{P}_{L_i,\mathcal{A}}[x]&=\sum_{\substack{x\leqslant n<2x\\ u_in+v_i\in \mathbb{P}}}1=\sum_{\substack{u_ix+v_i\leqslant m< 2u_ix+v_i\\m\in \mathbb{P}\\m\equiv v_i\pmod{u_i}}}1\\
&=\pi(2u_ix+v_i;u_i,v_i)-\pi(u_ix+v_i;u_i,v_i)+r'_{x,i},
\end{align*}
where $|r_{x,i}|\leqslant 1$ and $|r'_{x,i}|\leqslant 1$. It
follows from $\varphi_{L_i}(q)\ge 1$ that
\begin{eqnarray*}&&\left|\mathcal{P}_{L_i,\mathcal{A}}[x;q,a]
-\frac{\mathcal{P}_{L_i,\mathcal{A}}[x]}{\varphi_{L_i}(q)}\right|\\
&\le & \left| \pi(2u_ix+v_i;
qu_i,L_i(a))-\frac{\pi(2u_ix+v_i;u_i,v_i)}{\varphi_{L_i}(q)}\right|\\
&& +\left| \pi(u_ix+v_i;
qu_i,L_i(a))-\frac{\pi(u_ix+v_i;u_i,v_i)}{\varphi_{L_i}(q)}\right|
+2.
\end{eqnarray*}
Let $\psi(x)=\sum_{n\leqslant x}\Lambda(n)$ and
$$\psi(x;m,\ell)=\sum_{\substack{n\leqslant x\\n\equiv
\ell\pmod{m}}}\Lambda(n),$$
$$\vartheta(x;m,\ell)=\sum_{\substack{p\leqslant x\\p\equiv
\ell\pmod{m}}}\log p.$$
Here as usual, $\Lambda(n)$ is the
Mangoldt function.  Hence

\begin{align}\label{eq0422-1}&\sum_{\substack{q\leqslant
x^{\theta}\\(q,B)=1}}\max_{(L_i(a),q)=1}\left|\mathcal{P}_{L_i,\mathcal{A}}[x;q,a]
-\frac{\mathcal{P}_{L_i,\mathcal{A}}[x]}{\varphi_{L_i}(q)}\right|\nonumber\\
=&\sum_{\substack{q\leqslant
x^{1/3}\\(q,B)=1}}\max_{(L_i(a),qu_i)=1}\left| \pi(2u_ix+v_i;
qu_i,L_i(a))-\frac{\pi(2u_ix+v_i;u_i,v_i)\varphi(u_i)}{\varphi(qu_i)}\right|\nonumber\\
&~ +\sum_{\substack{q\leqslant
x^{1/3}\\(q,B)=1}}\max_{(L_i(a),qu_i)=1}\left| \pi(u_ix+v_i;
qu_i,L_i(a))-\frac{\pi(u_ix+v_i;u_i,v_i)\varphi(u_i)}{\varphi(qu_i)}\right|
+O(x^{1/3}),\nonumber\\
\ll&\sum_{\substack{q\leqslant
x^{1/3}\\(q,B)=1}}\max_{(L_i(a),qu_i)=1}\max_{\sqrt{x}<y\leqslant 2u_ix+v_i}\left| \psi(y;
qu_i,L_i(a))-\frac{\psi(y;u_i,v_i)\varphi(u_i)}{\varphi(qu_i)}\right|\nonumber\\
&~ +\sum_{\substack{q\leqslant
x^{1/3}\\(q,B)=1}}\max_{(L_i(a),qu_i)=1}\max_{\sqrt{x}<y\leqslant u_ix+v_i}\left| \psi(y;
qu_i,L_i(a))-\frac{\psi(y;u_i,v_i)\varphi(u_i)}{\varphi(qu_i)}\right|
+x^{6/7},\nonumber\\
=&S_1(x)+S_2(x)+O(x^{6/7}),~~\text{say,}
\end{align}
where the last but one step comes from integration by parts. In fact, let  $z$ be a number greater than $2$ and $(m,t)=1$. Then
\begin{align}\label{eq0423-1}\pi(z;m,t)&=\sum_{\substack{p\leqslant z\\p\equiv t \pmod{m}}}1=\frac{\vartheta(z;m,t)}{\log z}+\int_{2}^{z}\frac{\vartheta(u;m,t)}{u\log ^2u}du\nonumber\\
&=\frac{\psi(z;m,t)}{\log z}+\int_{2}^{z}\frac{\psi(u;m,t)}{u\log ^2u}du+O(\sqrt{z}).
\end{align}
From equation (\ref{eq0423-1}), we have
\begin{align*}
&\left| \pi(u_ix+v_i;
qu_i,L_i(a))-\frac{\pi(u_ix+v_i;u_i,v_i)\varphi(u_i)}{\varphi(qu_i)}\right|\\
\leqslant&\frac{1}{\log (u_ix+v_i)}\left| \psi(u_ix+v_i;
qu_i,L_i(a))-\frac{\psi(u_ix+v_i;u_i,v_i)\varphi(u_i)}{\varphi(qu_i)}\right|\\
&~+\int_{2}^{u_ix+v_i}\left| \frac{\psi(u;
qu_i,L_i(a))-\psi(u;u_i,v_i)\varphi(u_i)/\varphi(qu_i)}{u\log^2u}\right|du+O(\sqrt{u_ix})\\
\ll&\max_{\sqrt{x}<y\leqslant u_ix+v_i}\left| \psi(y;
qu_i,L_i(a))-\frac{\psi(y;u_i,v_i)\varphi(u_i)}{\varphi(qu_i)}\right|\int_{\sqrt{x}}^{u_ix+v_i}\frac{1}{u\log^2u}du+\sqrt{u_ix}\nonumber\\
\ll&\max_{\sqrt{x}<y\leqslant u_ix+v_i}\left| \psi(y;
qu_i,L_i(a))-\frac{\psi(y;u_i,v_i)\varphi(u_i)}{\varphi(qu_i)}\right|+\sqrt{u_ix}.
\end{align*}
The same estimates can be used when $u_ix+v_i$ is replaced by $2u_ix+v_i$.
By the triangle inequality, we have

\begin{align*}S_1(x)=&\sum_{\substack{q\leqslant
x^{1/3}\\(q,B)=1}}\max_{(L_i(a),qu_i)=1}\max_{\sqrt{x}<y\leqslant 2u_ix+v_i}\left| \psi(y;
qu_i,L_i(a))-\frac{\psi(y)}{\varphi(qu_i)}\right|\\
&~+\sum_{\substack{q\leqslant
x^{1/3}\\(q,B)=1}}\max_{(L_i(a),qu_i)=1}\max_{\sqrt{x}<y\leqslant 2u_ix+v_i}\left|\frac{\psi(y;u_i,v_i)\varphi(u_i)}{\varphi(qu_i)}-\frac{\psi(y)}{\varphi(qu_i)}\right|\\
=&S_{11}(x)+S_{12}(x),~\text{say}
\end{align*}
and
\begin{align*}S_2(x)=&\sum_{\substack{q\leqslant
x^{1/3}\\(q,B)=1}}\max_{(L_i(a),qu_i)=1}\max_{\sqrt{x}<y\leqslant u_ix+v_i}\left| \psi(y;
qu_i,L_i(a))-\frac{\psi(y)}{\varphi(qu_i)}\right|\\
&~+\sum_{\substack{q\leqslant
x^{1/3}\\(q,B)=1}}\max_{(L_i(a),qu_i)=1}\max_{\sqrt{x}<y\leqslant u_ix+v_i}\left|\frac{\psi(y;u_i,v_i)\varphi(u_i)}{\varphi(qu_i)}-\frac{\psi(y)}{\varphi(qu_i)}
\right|\\
=&S_{21}(x)+S_{22}(x),~\text{say.}
\end{align*}
Recall that $1\leqslant u_i\leqslant (\log x)^{\beta }$ and
$1\leqslant v_i\leqslant x^{\alpha }$, by the Siegel--Walfisz
theorem (see for example \cite[Page 133]{Da}) we have

$$\psi(y;u_i,v_i)=\frac{\psi(y)}{\varphi(u_i)}+O(ye^{-c_3\sqrt{\log y}})$$
for any $y>\sqrt{x}$, where $c_3$ is an absolute positive constant. From which we deduce
that
\begin{eqnarray}\label{eq0422-2}
S_{12}(x)+S_{22}(x)&\ll & \sum_{\substack{q\leqslant
x^{1/3}\\(q,B)=1}}\max_{(L_i(a),qu_i)=1}\max_{\sqrt{x}<y\leqslant 2u_ix+v_i}\frac{\varphi(u_i)}{\varphi(qu_i)}ye^{-c_3\sqrt{\log
y}}\nonumber\\
&\ll & \sum_{q\leqslant
x^{1/3}}\frac{1}{\varphi(q)}xe^{-c_4\sqrt{\log
x}}\nonumber\\
&\ll & xe^{-c_5\sqrt{\log x}},
\end{eqnarray}
where $c_5< c_4$ are two positive constants. We now estimate the summation
$S_{21}(x)$. Denoting by
$$E_1^*(z,q)=\max_{(L_i(a),q)=1}\max_{\sqrt{x}<y\leqslant z}\left|\psi(y;q,L_i(a))-\frac{\psi(y)}{\varphi(q)}\right|$$
and
$$E_2^*(z,q)=\max_{(L_i(a),q)=1}\max_{y\leqslant z}\left|\psi(y;q,L_i(a))-\frac{\psi(y)}{\varphi(q)}\right|,$$
then
\begin{align}\label{eq0422-03}
S_{21}(x)&=\sum_{\substack{q\leqslant
x^{1/3}\\(q,B)=1}}E_1^*\left(u_ix+v_i,qu_i\right)\nonumber\\&\leqslant\sum_{\substack{q\leqslant
x^{1/3}(\log x)^{\beta}\\(q,B)=1}}E_1^*\left(u_ix+v_i,q\right)\nonumber\\&\leqslant\sum_{\substack{q\leqslant
x^{1/3}(\log x)^{\beta}\\(q,B)=1}}E_2^*\left(u_ix+v_i,q\right),
\end{align}
where the last but one estimate comes from the fact that $(B,u_i)=1$ for $
P(u_i)<0.9 \log\log x <B$ since $B$ is a prime larger than
$0.9\log\log x$. Using again the triangle inequality, we find that
\begin{align}\label{formula1}
E_2^*(z,q)\le\max_{(L_i(a),q)=1}\max_{y\leqslant z}\left|\psi(y;q,L_i(a))-\frac{y}{\varphi(q)}\right|+\max_{(L_i(a),q)=1}\max_{y\leqslant z}\left|\frac{y}{\varphi(q)}-\frac{\psi(y)}{\varphi(q)}\right|.
\end{align}
The elaborate prime number theorem (see for example \cite[Page 112]{Da}) shows that
\begin{align}\label{formula2}
\max_{(L_i(a),q)=1}\max_{y\leqslant z}\left|\frac{y}{\varphi(q)}-\frac{\psi(y)}{\varphi(q)}\right|\ll\frac{1}{\varphi(q)}z\exp(-c^*(\log z)^{1/2})
\end{align}
for some constant $c^*>0$. Collecting equations (\ref{eq0422-03}), (\ref{formula1}) and (\ref{formula2}), we have
\begin{align}\label{formula3}
S_{21}(x)\leqslant\sum_{\substack{q\leqslant
x^{1/3}(\log x)^{\beta}\\(q,B)=1}}E^*\left(u_ix+v_i,q\right) +O\left(x\exp\left(-c^{**}\sqrt{\log x}\right)\right),
\end{align}
where $c^{**}$ is a positive constant slightly smaller than $c^*$ and
$$E^*\left(z,q\right)=\max_{(L_i(a),q)=1}\max_{y\leqslant z}\left|\psi(y;q,L_i(a))-\frac{y}{\varphi(q)}\right|.$$
Now, let $\chi$ be the characters with modulus $q$. From \cite[Page 163]{Da}, we know that
\begin{align}\label{formula4}
\sum_{\substack{q\leqslant
Q\\(q,B)=1}}E^*\left(u_ix+v_i,q\right)\ll Q(\log Qx)^{2}+\log x\sum_{\substack{q\leqslant
Q\\(q,B)=1}}\frac{1}{\varphi(q)}\sum_{\chi~\text{primitive}}\max_{y\leqslant u_ix+v_i}|\psi'(y,\chi)|,
\end{align}
where
\begin{equation}\label{Eq.12}
\psi'(y,\chi)=
\begin{cases}
\psi(y,\chi),~~~&\text{if~} \chi\neq\chi_0,\\
\psi(y,\chi_0)-y,~~~&\text{if~} \chi=\chi_0.
\end{cases}
\end{equation}
Here, of course, $\chi_0$ is the trivial character modulo $q$ and $$\psi(x,\chi)=\sum_{n\leqslant x}\Lambda(n)\chi(n).$$
It can be proved via \cite[Page 164]{Da} with $Q=x^{1/3}(\log x)^{\beta}$  that
\begin{align*}
\sum_{\substack{Q_1<q\leqslant
Q\\(q,B)=1}}\frac{1}{\varphi(q)}\sum_{\chi~\text{primitive}}\max_{y\leqslant u_ix+v_i}|\psi'(y,\chi)|\ll (\log x)^4\left(\frac{x(\log x)^\beta}{Q_1}+x^{5/6}(\log
x)^{1+5\beta/6}\right)
\end{align*}
for any number $Q_1\leqslant x^{1/3}(\log x)^{\beta}$. Thus,
the right hand--side of equation (\ref{formula4}) is
\begin{equation}\label{Eq.11}
\ll (\log x)^5\left(\frac{x(\log x)^\beta}{Q_1}+x^{5/6}(\log
x)^{1+5\beta/6}\right)+\log x\sum_{\substack{q\leqslant
Q_1\\(q,B)=1}}\frac{1}{\varphi(q)}\sum_{\chi~\text{primitive}}|\psi'(u_ix+v_i,\chi)|
\end{equation}
for any number $Q_1\leqslant x^{1/3}(\log x)^{\beta}$.
It is trivial
that
\begin{align}\label{Eq.13}
|\psi(u_ix+v_i,\chi_0)-(u_ix+v_i)|&\leqslant |\psi(u_ix+v_i,\chi_0)-\psi(u_ix+v_i)|+|\psi(u_ix+v_i)-(u_ix+v_i)|\nonumber\\
&\ll \sum_{\substack{n\leqslant u_ix+v_i\\(n,q)>1}}\Lambda(n)+(u_ix+v_i)e^{-c_5\sqrt{\log (u_ix+v_i)}}\nonumber\\
&\leqslant (\log q)\log (u_ix+v_i)+(u_ix+v_i)e^{-c_5\sqrt{\log (u_ix+v_i)}}\nonumber\\
&\ll (u_ix+v_i)e^{-c_6\sqrt{\log (u_ix+v_i)}}\nonumber\\
&\ll x e^{-c_7 \sqrt{\log x}} .
\end{align}
from the prime number theorem, where $c_6$ and $c_7$ are two
absolute positive constants. Now, we take $Q_1=e^{c_1\sqrt{\log
x}}$ and so the constraint $(q,B)=1$
 will guarantee that there is no Landau--Siegel zero in the domain $q\leqslant Q_1$. Therefore, for $\chi\neq\chi_0$ we shall have
\begin{equation}\label{Eq.14}
|\psi(u_ix+v_i,\chi)|\ll xe^{-c_8\sqrt{\log x}}
\end{equation}
within the scope $q\leqslant Q_1$ and $(q,B)=1$ (see for example
\cite[Page 132]{Da}), where $c_8$ is an absolute positive
constant. Combing equations (\ref{formula3})--(\ref{Eq.14}), we obtain that
\begin{eqnarray*}S_{21}(x) &\ll & \frac{x(\log x)^{\beta
+5}}{e^{c_1\sqrt{\log x}}}+\log x\sum_{q\leqslant e^{c_1\sqrt{\log
x}}}xe^{-c_9\sqrt{\log
x}}+xe^{-c^{**}\sqrt{\log x}}\\
&\ll & \frac{x(\log x)^{\beta +5}}{e^{c_1\sqrt{\log x}}}+x(\log x)
e^{-(c_9-c_1)\sqrt{\log x}}+xe^{-c^{**}\sqrt{\log x}},
\end{eqnarray*}
where
$c_9=\min\{c_7,c_8\}$. Similarly, we have
$$S_{11}(x)\ll \frac{x(\log x)^{\beta +5}}{e^{c_1\sqrt{\log x}}}+x(\log x)
e^{-(c_{10}-c_1)\sqrt{\log x}}+xe^{-c^{***}\sqrt{\log x}},$$ where $c_{10}$ and $c^{***}$ are two absolute positive
constants. We take $c_1=\frac 12 \min\{ c_9, c_{10}\}$ and
$c_{11}=\min\{\frac 12c_1,c^{**},c^{***}\}$. Then
$$S_{11}(x)\ll xe^{-c_{11}\sqrt{\log x}},\quad S_{21}(x)\ll xe^{-c_{11}\sqrt{\log x}},$$
which is surely more to expectation than the bound of the
condition (2) in {\bf Hypothesis 1}. This can be seen by $k\le
(\log x)^\alpha $,
$$\mathcal{P}_{L_i,\mathcal{A}}[x]=\pi(2u_ix+v_i;u_i,v_i)-\pi(u_ix+v_i;u_i,v_i)+r'_{x,i}\gg\frac{x}{\log x}$$
and
\begin{align*}
(\log
x)^{100k^2-1}&=\exp ((100k^2-1)\log\log x)\\
&\leqslant \exp ( 100
(\log x)^{2\alpha }(\log\log x))\\
& < \exp (c_{11} \sqrt{\log x}).
\end{align*}
This completes the proof of Proposition \ref{Pro}.
\end{proof}

\section{Proof of Theorem \ref{M} and Corollary \ref{app}}\label{sec2}

We now prove Theorem \ref{M} from  Theorem \ref{Maynard1} and Proposition \ref{Pro}.

\begin{proof}[Proof of Theorem \ref{M}] Let
$$\theta =\frac 13, \quad \mathcal{A}=\mathbb{Z},\quad \mathcal{P}=\mathbb{P},\quad
\mathcal{L} =\{ u_1n+v_1,\dots , u_kn+v_k\} .$$ Let $C_1$ be the same one in Theorem \ref{Maynard1}. We take
$\delta =\frac {1}{2}$ and $$C_0= \max\left\{\left\lfloor e^{4}\right\rfloor+1,2C_1\right\},$$ then we shall have $k\geqslant C_1$ and $\delta>2(\log k)^{-1}$ for $k\geqslant C_0$.

By Proposition \ref{Pro}, {\bf Hypothesis 1} is satisfied for
[$\mathcal{A},\mathcal{P},\mathcal{L},B,x, \theta$] chosen above, where $B$ is
the positive integer given in Proposition \ref{Pro}. Recall that for sufficiently large $x$, we have
\begin{align*}\mathcal{P}_{L_i,\mathcal{A}}[x]&=\sum_{\substack{x\leqslant n<2x\\ u_in+v_i\in \mathbb{P}}}1=\sum_{\substack{u_ix+v_i\leqslant m< 2u_ix+v_i\\m\in \mathbb{P}\\m\equiv v_i\pmod{u_i}}}1\\
&\geqslant\pi(2u_ix+v_i;u_i,v_i)-\pi(u_ix+v_i;u_i,v_i)-2\\
&=(1+o(1))\left\{\frac{1}{\varphi(u_i)}\frac{2u_ix+v_i}{\log (2u_ix+v_i)}-\frac{1}{\varphi(u_i)}\frac{u_ix+v_i}{\log (u_ix+v_i)}\right\}\\
&>\frac{2u_i}{3\varphi(u_i)}\frac{x}{\log x}
\end{align*}
by the Siegel--Walfisz theorem (see for example \cite[Page 133]{Da}).
From which we deduce
\begin{align*}\label{eq4}
\frac{\varphi(B)}{B}\frac{\varphi(u_i)}{u_i}\mathcal{P}_{L_i,\mathcal{A}}(x)
\geqslant& \left(1+O\left(\frac{1}{\log\log
x}\right)\right)\frac{\varphi(u_i)}{u_i}\frac{2u_i}{3\varphi(u_i)}\frac{x}{\log x}\\
\geqslant &\frac{2}{3}\left(1+O\left(\frac{1}{\log\log
x}\right)\right)\frac{x}{\log x}\\
>& \frac 1{2} \frac{\mathcal{A}[x]}{\log x}
= \delta\ \frac{\mathcal{A}[x]}{\log x}
\end{align*}
for any $1\leqslant i\leqslant k$, provided that $x$ is sufficiently large. By Theorem
\ref{Maynard1},
\begin{align*}
\#\{ n: x\leqslant n<2x, |\{ u_1n+v_1,\dots ,
u_kn+v_k\}\cap\mathcal{P}|\geqslant C_1^{-1}\delta  \log k \} &\gg x
\exp \left( -C_1 (\log x)^{\alpha } \right)\\
&\ge x\exp \left( -C_0 (\log x)^{\alpha } \right).
\end{align*}
The proof of Theorem \ref{M} is complete by the fact $C_1^{-1}\delta=\frac{1}{2}C_1^{-1}\ge C_0^{-1}$.
\end{proof}

As an application of Theorem \ref{M}, we prove Corollary \ref{app}.

\begin{proof}[Proof of Corollary \ref{app}]
Let us first fix a sufficiently large number $m_0$ and prove the
corollary via Theorem \ref{M} for the case that $x$ is larger than
$m_0$. Without loss of generality, we may assume that
$0<\varepsilon <1/100$.

We divide into the following fours cases:

{\bf Case I.} $x\ge m_0$ and $(C_0+1)^{2/\varepsilon}\le y\le
(\log x)^{3}$. We take $\alpha=\frac15$, $k=\left\lfloor
y^{\varepsilon/2}\right\rfloor$ and $$u_i=q, \quad v_i=(i-1)q+a
\quad (1\le i\le k)$$ in Theorem \ref{M}. Then we have $$C_0\le
k\le y^{\varepsilon/2}\le (\log x)^{3\varepsilon/2}<(\log
x)^{1/5}$$ and $$v_i\le kq\le (\log x)^{3\varepsilon/2} \cdot
0.9\log\log x <(\log x)^{1/5}.$$ Thus
\begin{align*} \#&\{x\le
n<2x:\left|\{qn+a,\dots,qn+(k-1)q+a\}\cap\mathbb{P}\right|>C_0^{-1}\log
k\}\\&\gg x\exp\left(-C_0(\log
x)^{1/5}\right)>x\exp\left(-\sqrt{\log x}\right)
\end{align*}
by Theorem \ref{M}. It follows that there are at least $x\exp\left(-\sqrt{\log x}\right)$ integers $n\in [x,2x)$ such that
\begin{equation}\label{6-5}
\pi(qn+y;q,a)-\pi(qn;q,a)>C_0^{-1}\log k\gg_{\varepsilon} \log y.
\end{equation}

{\bf Case II.} $x\ge m_0$ and $y< (C_0+1)^{2/\varepsilon}$. By the
Siegel--Walfisz theorem, there are at least
$\frac{1}{2}\frac{qx}{\varphi(q)\log x}$ integers $n\in [x,2x)$
such that $qn+a$ is a prime. In other words, for at least
$\frac{1}{2}\frac{qx}{\varphi(q)\log x}(>\frac{1}{2}\frac{x}{\log x})$ integers $n$ we have
$$\pi(qn+y;q,a)-\pi(qn;q,a)\ge 1\gg_{\varepsilon} \log y$$
since $a<q\le y^{1-\varepsilon}<y<  (C_0+1)^{2/\varepsilon}$.

{\bf Case III.} $x\ge m_0$ and $\log^3x<y\le qx/2$. Since
\begin{align*}
\sum_{x\le n<2x}(\pi(qn+y;q,a)-\pi(qn;q,a))&=\sum_{x\le n<2x}\sum_{\substack{qn<p\le qn+y\\p\equiv a\pmod{q}}}1\\
&=\sum_{\substack{qx<p\le 2qx+y\\p\equiv a\pmod{q}}}\sum_{\substack{x\le n<2x\\\frac{p-y}{q}\le n<\frac{p}{q}}}1\\
&>\sum_{\substack{\frac{3}{2}qx<p\le 2qx\\p\equiv a\pmod{q}}}\sum_{\substack{\frac{p-y}{q}\le n<\frac{p}{q}}}1\\
&>\frac{y}{2q}\sum_{\substack{\frac{3}{2}qx<p\le 2qx\\p\equiv a\pmod{q}}}1,
\end{align*}
we know by the Siegel--Walfisz theorem and $q<0.9\log\log x$ that
$$\sum_{x\le n<2x}(\pi(qn+y;q,a)-\pi(qn;q,a))>\frac{xy}{6\varphi(q)\log x}>\frac{xy}{6\log\log x \log x},$$
from which we deduce that at least $x\exp\left(-\sqrt{\log
x}\right)$ integers $n\in [x,2x)$ satisfy our requirement.

{\bf Case IV.} $x\ge m_0$ and $y>qx/2$. It follows from the
Siegel--Walfisz theorem and $q<0.9\log\log x$ that every integer
$n\in [x,2x)$ would satisfy our requirement.

We are left over to treat $x< m_0$. In this case, $y$ is
sufficiently large by the assumption of our corollary, and the
corollary follows trivially from the prime number theorem in
arithmetic progressions.
\end{proof}

\begin{remark} From the proof of Corollary \ref{app} we know that it is nontrivial only for $x$ sufficiently large and $y$ equally to a relatively small power of $\log x$, otherwise the corollary shall be an exercise of the prime number theorem and the Siegel--Walfisz theorem.
\end{remark}

\section{Proofs of Theorem \ref{thm1} and its corollaries}\label{sec4}

\begin{proof}[Proof of Theorem \ref{thm1}]
Let $x$ be a sufficiently large number and $X=(x\log\log x)^{1/\alpha }$. Then $x=\frac{X^\alpha}{\log\log x} $, $a_1<\cdots < a_t\le
\frac{X^\alpha}{\log\log x}$, $$1\le \ell_i<0.9\log\log x<0.9\log\log X$$ and $$t>\left(\log\frac{X^\alpha}{\log\log x}\right)^\alpha >0.9\alpha^\alpha (\log X)^\alpha. $$
Let $k$ be the integer with
\begin{equation}\label{e1}k\le 0.9\alpha^\alpha (\log X)^\alpha
<k+1.\end{equation} Then $k< (\log X)^\alpha$ and $k<t$. Let
 $v_i=\ell_ia_t-a_i$ $(1\le i\le k)$. Then $1\le v_i\le
\ell_ia_t\le X^\alpha$. By Theorem \ref{M}, there is a positive constant $C_0$  depending only on $\alpha$ such that
\begin{equation*}
\#\left\{ X\leqslant n<2X\ \!\!: \!\!\!\ |\{ \ell_1n+v_1,\dots , \ell_kn+v_k \}\cap
\mathbb{P}|\geqslant C_0^{-1}\log k \right\}\gg X\exp (-C_0(\log
X)^\alpha ).
\end{equation*}
That is,
\begin{align*}
\#\left\{ X\leqslant n<2X\!\! : \!\!\ |\{ \ell_1(n+a_t)-a_1,\dots , \ell_k(n+a_t)-a_k
\}\cap \mathbb{P}|\geqslant C_0^{-1}\log k \right\}\\\gg X\exp (-C_0
(\log X)^\alpha ).
\end{align*}
It follows that there are at least $X\exp (-C_0(\log X)^\alpha )$
of integers $n\in [X, 2X)$
such that at least $C_0^{-1}\log k$ of the representations $$\ell_i(n+a_t)=p+a_i \quad (p\in \mathbb{P},1\leqslant i\leqslant t)$$
are available.
By \eqref{e1}, for $n\in [X, 2X)$, we have
\begin{align*}
\log k&> \log \left(0.8 \alpha^\alpha (\log X)^\alpha\right)\ge \frac 12 \alpha \log\log (3X)\\
&\ge \frac 12 \alpha \log\log (n+a_t)>\frac12\alpha\log\log n.
\end{align*}
Noting that
\begin{align*}
X\exp (-C_0 (\log X)^\alpha )&=(x\log\log x)^{1/\alpha} \exp (-C_0
\alpha^{-\alpha} (\log x+\log\log\log x)^\alpha )\\
&>x^{1/\alpha}\exp \left(-C_0
2^{\alpha}\alpha^{-\alpha} (\log x)^\alpha \right),
\end{align*}
thus there are at least
$x^{1/\alpha}\exp \left(-C_0
2^{\alpha}\alpha^{-\alpha} (\log x)^\alpha \right)$ of
integers $$n\in \left[(x\log\log x)^{1/\alpha}, 2(x\log\log x)^{1/\alpha} \right)$$ such that at least $\frac12C_0^{-1}\alpha\log\log n$ of the representations
$$\ell_i(n+a_t)=p+a_i \quad (p\in \mathbb{P},1\leqslant i\leqslant t)$$
are available. Taking $\widetilde{n}=n+a_t$, then there are at least
$x^{1/\alpha}\exp \left(-\widetilde{C_0}
2^{\alpha}\alpha^{-\alpha} (\log x)^\alpha \right)$ of
integers $$\widetilde{n}\in [(x\log\log x)^{1/\alpha}, 2(x\log\log x)^{1/\alpha} )$$such that at least $\frac12C_0^{-1}\alpha\log\log n$ of the representations
$$\ell_i\widetilde{n}=p+a_i \quad (p\in \mathbb{P},1\leqslant i\leqslant t)$$
hold, where $\widetilde{C_0}$ is a constant slightly larger than $C_0$.

This completes the proof of Theorem \ref{thm1}.
\end{proof}

\begin{proof}[Proof of Corollary \ref{cor1}] For a sufficiently
large $m$ with $\mathscr{A}(m)>(\log m)^\alpha $ and $\ell_i<0.9\log\log m$. We take
$\alpha_1 =\min \{ \alpha , \frac 15\} $.  Then
$\mathscr{A}(m)>(\log m)^{\alpha_1} $. By Theorem \ref{thm1},
there is  an integer $n\in \left[(m\log\log m)^{1/\alpha_1}, 2(m\log\log m)^{1/\alpha_1}
\right)$ such that \begin{equation}\label{e2} f_{\mathscr{A},\mathscr{L}}(n)\ge c'
\log\log n.\end{equation}
 Since there are infinitely many positive
integers $m$ such that $\mathscr{A}(m)>(\log m)^\alpha $, it
follows that
$$\limsup_{n\to \infty } \frac{f_{\mathscr{A},\mathscr{L}}(n)}{\log\log n}\ge
c'>0.$$

This completes the proof of Corollary \ref{cor1}.
\end{proof}

\begin{proof}[Proof of Corollary \ref{cor1a}] Take $\ell_i=1$ for all $i$. Let
$$\alpha_1 =\min\left\{ \alpha , \frac 15 \right\} .$$
For a sufficiently large $x$, let $X\log\log X=(x/2)^{\alpha_1} $. Then
$\mathscr{A}(X)>(\log X)^{\alpha } \ge (\log X)^{\alpha_1}$. By
Theorem \ref{thm1}, there are at least $X^{1/\alpha_1}\exp (-c
(\log X)^{\alpha_1} )$
 integers $$n\in \left[(X\log\log X)^{1/{\alpha_1}}, 2(X\log\log X)^{1/{\alpha_1}} \right)$$ with
$$f_{\mathscr{A}}(n)\geqslant 2c'\log\log n ,$$
where $c$ and $c'$ are two positive constant depending only on $\alpha_1 $.
Noting that $$(X\log\log X)^{1/\alpha_1}=\frac 12 x$$ and
\begin{eqnarray*}X^{1/\alpha_1}\exp (-c (\log X)^{\alpha_1} ) &\ge&\frac x2 (\log\log X)^{-1/\alpha_1}\exp
(-c {\alpha_1}^{\alpha_1} (\log x-\log 2)^{\alpha_1})\\
&\ge & x \exp (-c (\log x)^{\alpha_1} )\\
&\ge & x \exp (-c (\log x)^{\alpha} )\end{eqnarray*} for all
sufficiently large $x$. So there are at least $x\exp (-c (\log
x)^\alpha )$ integers $n\in [x/2, x)$ with
$$f_{\mathscr{A}}(n)\ge 2c'\log\log n\ge 2c'\log\log \frac x2>c'\log\log
x.$$

This completes the proof of Corollary \ref{cor1a}.
\end{proof}

\begin{proof}[Proof of Corollary \ref{cor2}] Let
$a_i=2^{i^\ell}$. Then $$\mathscr{A}(x)\ge (\log x/\log
2)^{1/\ell}
>(\log x)^{1/\ell}.$$
Now Corollary \ref{cor2} follows from Corollary \ref{cor1a}.
\end{proof}

\section{Further refinements}\label{111}
Maybe we should mention that the results obtained previously in this article can be strengthen slightly by requiring the integers $n$ chosen from a relatively sparse set. In the following, we sketch some adjustments of the previous proofs to make integers $n$ restricted in the Beatty sequence $\left\{\lfloor \beta k+\gamma\rfloor\right\}_{k=1}^{\infty}$, where $\beta>1$ is an irrational real number  and $\gamma$ a real number. Certainly, the adjustments are focused on the verifications of {\bf Hypothesis 1} and these can be pushed parallel with the domain $0<\theta<1/2$. In another word, Proposition \ref{Pro} can be established for $$\mathcal{A}=\left\{\lfloor \beta k+\gamma\rfloor\right\}_{k=1}^{\infty}.$$

The type $\tau$ of $\beta$ is defined by
$$\tau=\sup\left\{\varrho\in \mathbb{R}:\liminf_{n\to \infty}n^{\varrho}\|\beta n\|=0 \right\},$$
where $\|y\|$ denotes the the distance from the
real number $y$ to the nearest integer, i.e.,
$$\|y\|=\min_{n\in \mathbb{Z}}|y-n|.$$
Let the continued fraction expansion of $\beta$ be $[a_0, a_1, \dots ]$ and let $[a_0, a_1, \dots , a_n]=p_n/q_n$ with $q_n\geqslant 1$ and $(p_n, q_n)=1$.
The number $\beta$ is said to have bounded partial quotients if there is an absolute constant $K$ such
that $a_i\leqslant K$ for any $a_i$ in its continued fraction.

\begin{lemma}\cite[Theorem 182]{Hardy}\label{continuedfraction} For any integer $q$ with $1\leqslant q\leqslant q_n$, we have $\| q_n\beta \| \leqslant \| q\beta \|$.
\end{lemma}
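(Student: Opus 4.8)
The plan is to deduce the lemma from the classical fact that the convergent denominators $q_n$ are \emph{best approximations of the second kind}: for every integer $q$ with $1\le q\le q_n$ and every integer $p$ one has $|q\beta-p|\ge |q_n\beta-p_n|$. Granting this, if $p^{*}$ denotes the nearest integer to $q\beta$ then $\|q\beta\|=|q\beta-p^{*}|\ge |q_n\beta-p_n|$; specialising to $q=q_n$ shows that $p_n$ is a nearest integer to $q_n\beta$, i.e.\ $\|q_n\beta\|=|q_n\beta-p_n|$, and the stated inequality $\|q_n\beta\|\le\|q\beta\|$ follows at once. So everything reduces to the displayed best-approximation inequality.

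To prove that inequality I would invoke two standard properties of the convergents of an irrational $\beta$, both of which are available from Hardy--Wright: the determinant identity $p_{n+1}q_n-p_nq_{n+1}=(-1)^{n}$, and the sign alternation of the quantities $\theta_n:=q_n\beta-p_n$, so that $\theta_n$ and $\theta_{n+1}$ are both nonzero and of opposite sign. Given $q$ and $p$, I would solve the linear system $p=up_n+vp_{n+1}$, $q=uq_n+vq_{n+1}$; since the coefficient matrix has determinant $\pm1$, its inverse has integer entries and $(u,v)$ is a pair of integers, with $q\beta-p=u\theta_n+v\theta_{n+1}$. If $v=0$ then $q=uq_n$ forces $u\ge1$ (because $q\ge1$), so $|q\beta-p|=|u|\,|\theta_n|\ge|\theta_n|$. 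If $v\ne0$, I would read off from $uq_n=q-vq_{n+1}$ together with $1\le q\le q_n<q_{n+1}$ that $u$ and $v$ have opposite signs and, in particular, $u\ne0$: the case $v\ge1$ gives $uq_n\le q_n-q_{n+1}<0$, and the case $v\le-1$ gives $uq_n\ge1+q_{n+1}>0$. Since $\theta_n$ and $\theta_{n+1}$ are also of opposite sign, the two summands $u\theta_n$ and $v\theta_{n+1}$ then have the \emph{same} sign, so no cancellation occurs and
$$|q\beta-p|=|u|\,|\theta_n|+|v|\,|\theta_{n+1}|\ge|u|\,|\theta_n|\ge|\theta_n|,$$
using $|u|\ge1$. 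This is exactly the required bound.

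The only genuinely delicate point is the sign bookkeeping: one must be sure that $(u,v)$ has strictly mixed signs whenever $v\ne0$, and this is precisely where the hypothesis $q\le q_n$ (hence $q<q_{n+1}$) and the monotonicity $q_n<q_{n+1}$ are used; the rest is routine. Because the statement is quoted verbatim as Theorem~182 of Hardy--Wright, one may alternatively just cite that reference, and the sketch above is simply the content of its proof.
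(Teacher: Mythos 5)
Your argument is correct, and it is essentially the standard best–approximation (``second kind'') proof that underlies the cited result: the paper itself gives no proof of this lemma, it simply quotes \cite[Theorem 182]{Hardy}, so your sketch just reconstructs the content of that reference. One small caveat: the strict inequality $q_n<q_{n+1}$ that you invoke to force $u$ and $v$ to have opposite signs holds for every $n\geqslant 1$ (since $q_{n+1}=a_{n+1}q_n+q_{n-1}>q_n$), but it can fail at $n=0$ when $a_1=1$, and in that case your intermediate claims are genuinely false (for $\beta=[a_0;1,a_2,\dots]$ the nearest integer to $\beta$ is $a_0+1\neq p_0$, so $\|q_0\beta\|<|q_0\beta-p_0|$); the lemma itself survives because $1\leqslant q\leqslant q_0=1$ forces $q=q_0$, so you should either dispose of $n=0$ trivially or restrict the best–approximation step to $n\geqslant 1$, which is exactly why Hardy--Wright state their theorem with a hypothesis excluding this initial case.
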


\begin{lemma}\label{boundpq} If $\beta$ has bounded partial quotients, then the type of $\beta $ is $1$.
\end{lemma}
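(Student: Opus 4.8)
The statement is the classical fact that numbers with bounded partial quotients are exactly the \emph{badly approximable} numbers, and the type equals $1$ for all of them. I would prove the two inequalities $\tau\ge 1$ and $\tau\le 1$ separately; only the second uses the hypothesis. For the lower bound, recall the standard estimate $\|q_n\beta\|=|q_n\beta-p_n|<1/q_{n+1}\le 1/q_n$, valid for every irrational $\beta$. Hence for any fixed $\varrho<1$ one has $q_n^{\varrho}\|q_n\beta\|<q_n^{\varrho-1}\to 0$ as $n\to\infty$, so $\liminf_{m\to\infty}m^{\varrho}\|\beta m\|=0$ and therefore $\varrho\le\tau$; letting $\varrho\uparrow 1$ gives $\tau\ge 1$.

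For the upper bound I would bring in the hypothesis. Write the continued fraction of $\beta$ as $[a_0,a_1,\dots]$ with $a_i\le K$ for all $i$. From the recursion $q_{n+1}=a_{n+1}q_n+q_{n-1}$ and $q_{n-1}\le q_n$ we get $q_{n+1}\le (K+1)q_n$ for every $n$. Also, from $\|q_n\beta\|>1/(q_{n+1}+q_n)$ together with $q_n\le q_{n+1}$ we obtain $\|q_n\beta\|>1/(2q_{n+1})$. Now given an arbitrary integer $n\ge 1$, choose $m$ with $q_m\le n<q_{m+1}$ (possible since $q_0=1$ and $q_j\to\infty$). Since $n\le q_{m+1}$, Lemma \ref{continuedfraction} applied at level $m+1$ yields $\|q_{m+1}\beta\|\le\|\beta n\|$; combining this with the two facts above,
\[
\|\beta n\|\ge\|q_{m+1}\beta\|>\frac{1}{2q_{m+2}}\ge\frac{1}{2(K+1)^2 q_m}\ge\frac{1}{2(K+1)^2\,n}.
\]
Thus $\|\beta n\|\ge c_\beta/n$ for all $n\ge 1$, with $c_\beta=(2(K+1)^2)^{-1}$.

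Finally, for any $\varrho>1$ this lower bound gives $n^{\varrho}\|\beta n\|\ge c_\beta n^{\varrho-1}\to\infty$, so $\liminf_{n\to\infty}n^{\varrho}\|\beta n\|=\infty>0$ and hence $\varrho$ is not in the set defining $\tau$; therefore $\tau\le 1$, and together with the first part $\tau=1$. The one point that needs a little care is the direction in which Lemma \ref{continuedfraction} is invoked: that lemma bounds $\|q_n\beta\|$ from above among denominators $q\le q_n$, so to lower-bound $\|\beta n\|$ for $n$ in the window $q_m\le n<q_{m+1}$ one must apply it at level $m+1$ and then control $q_{m+2}$ using the bounded-partial-quotient recursion $q_{j+1}\le(K+1)q_j$; everything else is a routine use of standard continued-fraction estimates, so there is no genuine obstacle here.
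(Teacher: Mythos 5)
Your proof is correct and follows essentially the same route as the paper: both halves rest on the convergent denominators $q_n$ (giving $\liminf q_n^{1-\varepsilon}\|\beta q_n\|=0$, hence $\tau\ge 1$) and on combining Lemma \ref{continuedfraction} with the recursion bound $q_{j+1}\le (K+1)q_j$ to get a uniform bound $n\|\beta n\|\gg_K 1$, hence $\tau\le 1$. The only cosmetic differences are that you use the standard two-sided estimates $1/(q_{n+1}+q_n)<\|q_n\beta\|<1/q_{n+1}$ where the paper works directly with $\|q_n\beta\|=1/(\beta_n q_n+q_{n-1})$ and $\beta_n<K+1$, and you bracket $n$ in $[q_m,q_{m+1})$ (applying the recursion twice) instead of $(q_{n-1},q_n]$, yielding a slightly different but equally valid constant.
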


\begin{proof} Since $\beta$ has bounded partial quotients, there is an absolute constant $K$ such
that $a_i\leqslant K$ for all $i\ge 0$. Let $\beta =[a_0, a_1, \dots , a_n, \beta_n ]$. Then $\beta_n< K+1$ and
$$\beta -\frac{p_n}{q_n}=\frac{\beta_n p_n+p_{n-1}}{\beta_n q_n+q_{n-1}}-\frac{p_n}{q_n}=\frac{(-1)^n}{q_n (\beta_n q_n+q_{n-1})}.$$
Thus,
$$q_n\| \beta q_n \| =q_n |\beta q_n -p_n | =\frac {q_n}{\beta_n q_n+q_{n-1}}>\frac {q_n}{(K+1) q_n+q_{n-1}}\geqslant \frac 1{K+2}.$$
Clearly, $q_n=a_nq_{n-1}+q_{n-2}\leqslant (K+1)q_{n-1}$.
It follows from Lemma \ref{continuedfraction} that for $q_{n-1}<q\le q_n$, 
$$ q\| \beta q \| > q_{n-1} \| \beta q_n \|\geqslant \frac 1{(K+1)(K+2)}.$$
Hence
$$\liminf_{q\to +\infty} q\| \beta q \| \geqslant \frac 1{(K+1)(K+2)}.$$
For any $\varepsilon >0$, by 
$$q_n\| \beta q_n \| =|\beta q_n -p_n | =\frac {q_n}{\beta_n q_n+q_{n-1}}<1,$$
we have 
$$\liminf_{q\to +\infty} q^{1-\varepsilon }\| \beta q \| \le \liminf_{n\to +\infty} q_n^{1-\varepsilon }\| \beta q_n \| =0.$$
Therefore, the type of $\beta $ is $1$.
\end{proof}

The following standard results illustrated that the elements are well--distributed in $\left\{\lfloor \beta k+\gamma\rfloor\right\}_{k=1}^{\infty}$.

\begin{lemma}[\cite{BG}, Theorem]\label{le5-1}
Let $\beta>1$ be irrational with bounded partial quotients and $0\leqslant\gamma<\beta$. Then for integers $a,d$ with $d \geqslant 2$ and $0\leqslant a<d$, we have
$$\sum_{\substack{n\leqslant x\\\lfloor\beta n+\gamma\rfloor\equiv a\pmod{d}}}1=\frac{x}{d}+O\left(d\log^3x\right),$$
where the implied constant depends at most on $\beta$ and $\gamma$.
\end{lemma}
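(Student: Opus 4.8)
The plan is to reduce the count to bounds for sums of the sawtooth function along an arithmetic progression, and then to apply the classical continued–fraction estimates for such sums, using the bounded partial quotients of $\beta$ (equivalently, by Lemma \ref{boundpq}, that $\beta$ has type $1$) to control the dependence on $d$. First I would record the elementary identity: for any integer $m$ and integers $a,d$ with $d\ge 2$,
$$\mathbf{1}[\,m\equiv a\!\!\pmod d\,]=\left\lfloor\frac{m-a}{d}\right\rfloor-\left\lfloor\frac{m-a-1}{d}\right\rfloor,$$
and combine it, via $\lfloor \lfloor t\rfloor /d\rfloor=\lfloor t/d\rfloor$, with the substitution $m=\lfloor\beta n+\gamma\rfloor$ to get
$$\mathbf{1}[\,\lfloor\beta n+\gamma\rfloor\equiv a\!\!\pmod d\,]=\left\lfloor\frac{\beta n+\gamma-a}{d}\right\rfloor-\left\lfloor\frac{\beta n+\gamma-a-1}{d}\right\rfloor.$$
Writing $\lfloor t\rfloor=t-\tfrac12-\rho(t)$ with $\rho(t)=\{t\}-\tfrac12$ the sawtooth function and summing over $n\le x$, the linear terms telescope to $\lfloor x\rfloor/d=x/d+O(1)$, the constants cancel, and one is left with
$$\sum_{\substack{n\le x\\ \lfloor\beta n+\gamma\rfloor\equiv a\,(d)}}1=\frac{x}{d}+O(1)+\sum_{n\le x}\rho\!\left(\frac{\beta}{d}n+c_1\right)-\sum_{n\le x}\rho\!\left(\frac{\beta}{d}n+c_0\right),$$
where $c_0=(\gamma-a)/d$ and $c_1=(\gamma-a-1)/d$. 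Thus everything reduces to bounding $T(\eta):=\sum_{n\le x}\rho(\tfrac{\beta}{d}n+\eta)$ uniformly in the shift $\eta$.

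Next I would bound $T(\eta)$ in the standard way. By the truncated Fourier expansion of $\rho$ (or by Vaaler's polynomial approximation, or by the Erd\H{o}s--Tur\'an inequality), for any integer $H\ge 1$,
$$T(\eta)\ll\frac{x}{H}+\sum_{1\le h\le H}\frac1h\left|\sum_{n\le x}e^{2\pi i h\beta n/d}\right|\ll\frac{x}{H}+\sum_{1\le h\le H}\frac1h\min\!\left(x,\ \frac{1}{\|h\beta/d\|}\right).$$
To estimate the last sum I would feed in the hypothesis on $\beta$: since $\beta$ has bounded partial quotients, its convergent denominators $q_j$ satisfy $q_{j+1}\ll q_j$ and $\|q_j\beta\|\asymp 1/q_j$, which is exactly the mechanism behind the proofs of Lemma \ref{continuedfraction} and Lemma \ref{boundpq}. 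Splitting the range of $h$ according to the convergents of $\beta$ (Vinogradov's device) then controls $\sum_{h\le H}\min(x,1/\|h\beta/d\|)$, and after restoring the harmonic weight one obtains a bound of the shape $d\,(\log Hx)^{2}$; taking $H\asymp x$ gives $T(\eta)\ll d\log^{3}x$, and substituting back completes the argument.

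The step I expect to be the real obstacle is the uniform tracking of the $d$--dependence in this last estimate. The irrational $\beta/d$ need not itself have bounded partial quotients, so one cannot simply quote the bounded–partial–quotient bound for $\beta/d$; instead one must run the continued–fraction analysis for $\beta$ and show that passing from $\|h\beta\|$ to $\|h\beta/d\|$ — equivalently, intersecting the relevant $h$–ranges with residue classes determined by $d$ — costs only a factor $O(d)$ together with one extra logarithm. This is the technical heart of the estimate and is precisely what produces the $d\log^{3}x$ error term; it is carried out in detail in \cite{BG}, whose argument I would follow.
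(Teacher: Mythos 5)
The paper itself contains no proof of Lemma \ref{le5-1}: it is quoted directly from Begunts--Goryashin \cite{BG}, so there is no internal argument to compare yours against. Judged on its own, your sketch is a sound reconstruction of the standard (and surely the cited) route: the indicator identity and $\lfloor\lfloor t\rfloor/d\rfloor=\lfloor t/d\rfloor$ correctly reduce the congruence count to $x/d+O(1)$ plus two sawtooth sums along $n\mapsto(\beta/d)n+\eta$, the shift $\eta$ harmlessly disappears in the exponential-sum bound, and the Erd\H{o}s--Tur\'an/Vaaler step is applied correctly. The one step you defer wholesale to \cite{BG} -- uniformity in $d$ when passing from $\|h\beta\|$ to $\|h\beta/d\|$ -- is actually cheaper than you suggest: if $m$ is the nearest integer to $h\beta/d$, then $d\|h\beta/d\|=|h\beta-md|\ge\|h\beta\|$, so $\|h\beta/d\|\ge\|h\beta\|/d$, and the bounded-partial-quotient spacing of $\|h\beta\|$ (the same mechanism as in Lemmas \ref{continuedfraction} and \ref{boundpq}: $h\|h\beta\|\gg1$ for all $h$, and at most $O(M\delta+1)$ values $h\asymp M$ with $\|h\beta\|\le\delta$) yields $\sum_{h\le H}h^{-1}\min\bigl(x,d/\|h\beta\|\bigr)\ll d\log^{2}(Hx)$, so with $H\asymp x$ one gets an error $O(d\log^{2}x)$, comfortably within the stated $O(d\log^{3}x)$; your bookkeeping of the exact power of the logarithm is slightly loose but immaterial. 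In short, your proposal is correct in outline and could easily be made self-contained, whereas the paper simply relies on the citation.
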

\begin{lemma}[\cite{BS}, Theorem 5.1]\label{le5-2}
Let $\beta$ and $\gamma$ be fixed real numbers with $\beta$ irrational, positive and of finite type. Then there is a constant $\kappa$ (depending on $\beta$) such that for all integers $0\leqslant a\leqslant q\leqslant N^{\kappa}$ with $(a,q)=1$, we have
$$\sum_{\substack{n\leqslant N}}\Lambda\left(q\lfloor \beta n+\gamma\rfloor+a\right)=\beta^{-1}\sum_{\substack{m\leqslant\lfloor \beta N+\gamma\rfloor}}\Lambda\left(qm+a\right)+O\left(N^{1-\kappa}\right),$$
where the implied constant depends only on $\beta$ and $\gamma$.
\end{lemma}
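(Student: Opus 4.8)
The plan is to detect membership in the Beatty sequence $\{\lfloor\beta k+\gamma\rfloor\}_{k\ge1}$ by a sawtooth function, extract the asserted main term, and dispose of the resulting error through bounds for exponential sums over primes in arithmetic progressions. Write $e(t)=e^{2\pi it}$ and $\alpha=\beta^{-1}\in(0,1)$. For each integer $m$ there is at most one $k\ge1$ with $\lfloor\beta k+\gamma\rfloor=m$, namely the integer (if any) lying in $[\alpha(m-\gamma),\alpha(m-\gamma)+\alpha)$; hence the indicator that $m$ belongs to the sequence equals $\lceil\alpha(m+1-\gamma)\rceil-\lceil\alpha(m-\gamma)\rceil=\alpha+\bigl(\{-\alpha(m+1-\gamma)\}-\{-\alpha(m-\gamma)\}\bigr)$. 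Multiplying by $\Lambda(qm+a)$ and summing over $m\le M:=\lfloor\beta N+\gamma\rfloor$ (the $O(1)$ boundary indices coming from the constraint $1\le k\le N$ contribute only $O(\log qN)$), the constant term $\alpha=\beta^{-1}$ reproduces exactly $\beta^{-1}\sum_{m\le M}\Lambda(qm+a)$, so it remains to prove
\[
\sum_{m\le M}\Lambda(qm+a)\bigl(\{-\alpha(m+1-\gamma)\}-\{-\alpha(m-\gamma)\}\bigr)\ll N^{1-\kappa}.
\]

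Next I would replace each sawtooth $t\mapsto\{t\}-\tfrac12$ by Vaaler's trigonometric approximation: for a parameter $H\ge1$, $\{t\}-\tfrac12=\sum_{1\le|h|\le H}c_h e(ht)+\Delta_H(t)$ with $|c_h|\ll|h|^{-1}$ and $|\Delta_H(t)|\le\sum_{0\le|h|\le H}b_h e(ht)$, $b_h\ll H^{-1}$. This reduces the error sum above, up to a contribution $\ll M(\log N)^{O(1)}/H$ coming from $b_0$, to a linear combination with weights $O(|h|^{-1})$ of the exponential sums $T(h):=\sum_{m\le M}\Lambda(qm+a)e(h\alpha m)$, $1\le|h|\le H$. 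Detaching the progression by additive characters ($\mathbf{1}_{n\equiv a\,(q)}=q^{-1}\sum_{b\bmod q}e(b(n-a)/q)$, $n=qm+a$) expresses $qT(h)$ as a sum over $b\bmod q$ of unimodular multiples of the pure prime exponential sums $\sum_{n\le X}\Lambda(n)e(\vartheta_{b,h}n)$, where $X\asymp qM$ and $\vartheta_{b,h}=(h\alpha+b)/q$.

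The core of the argument, and the step I expect to be the main obstacle, is a power saving for these prime sums that is uniform in $q\le N^{\kappa}$. Here I would apply Vaughan's identity to split $\Lambda$ into type I and type II bilinear sums and estimate them by the Vinogradov method, obtaining the classical bound $\ll X(\log X)^{4}\bigl(s^{-1/2}+X^{-1/5}+(s/X)^{1/2}\bigr)$ whenever $|\vartheta_{b,h}-r/s|\le s^{-2}$ with $(r,s)=1$. To force the relevant denominators $s$ into the usable middle range one invokes the finite type $\tau$ of $\beta$ (equivalently of $\alpha$), which gives $\|h\alpha\|\gg_\varepsilon h^{-\tau-\varepsilon}$ and, more to the point, limits how close $\vartheta_{b,h}$ can come to a rational with small denominator as $b$ ranges over residues mod $q$: the few ``resonant'' $b$ are precisely matched against the smallness of $\|h\alpha\|$. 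Summing the resulting saving over $b\bmod q$, then over $1\le|h|\le H$ against the weights $|h|^{-1}$, and combining with the $M/H$ term, one takes $H$ to be a small fixed power of $N$ and balances to reach a total error $\ll N^{1-\kappa'}$ for some $\kappa'>0$; shrinking $\kappa$ if necessary gives the lemma. The delicate points are ensuring the power saving survives the division by $q$ (so that $\kappa>0$ is admissible) and keeping the $h$-sum convergent, and it is these two requirements that govern how small $\kappa$ must be.
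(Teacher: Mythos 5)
The paper gives no proof of this lemma: it is imported verbatim as Theorem 5.1 of Banks--Shparlinski [BS], so there is no internal argument to compare yours with. Your sketch is in substance a correct reconstruction of the Banks--Shparlinski proof: detect Beatty membership by the sawtooth identity, approximate by trigonometric polynomials (you use Vaaler's construction where [BS] use Vinogradov's lemma -- immaterial), and reduce to prime exponential sums $\sum_{n\le X}\Lambda(n)e(\vartheta_{b,h}n)$ treated by Vaughan's identity, with the finite type of $\beta$ forcing the Dirichlet denominators into the usable middle range uniformly for $q\le N^{\kappa}$, exactly as in the source. The one inaccurate detail is your talk of ``resonant'' residues $b$: none exist, since for any rational $r/s$ one has $|\vartheta_{b,h}-r/s|\ge \|hs\beta^{-1}\|/(qs)\gg_{\varepsilon}(hs)^{-\tau-\varepsilon}/(qs)$ uniformly in $b$ (the integer $bs-rq$ cannot spoil the lower bound), so the step you flagged as the main obstacle is in fact uniform in $b$ and goes through for $H$ and $\kappa$ small fixed powers, only simplifying your argument.
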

\begin{lemma}[\cite{BS}, Theorem 5.4]\label{le5-3}
Let $\beta$ and $\gamma$ be fixed real numbers with $\beta$ irrational, positive and of finite type. Then there is a constant $\kappa$ (depending on $\beta$) such that for all integers $0\leqslant a\leqslant q\leqslant N^{\kappa}$ with $(a,q)=1$, we have
$$\sum_{\substack{n\leqslant N\\\lfloor \beta n+\gamma\rfloor\equiv a\pmod{q}}}\Lambda\left(\lfloor \beta n+\gamma\rfloor\right)=\beta^{-1}\sum_{\substack{m\leqslant\lfloor \beta N+\gamma\rfloor\\ m\equiv a\pmod{q}}}\Lambda\left(m\right)+O\left(N^{1-\kappa}\right),$$
where the implied constant depends only on $\beta$ and $\gamma$.
\end{lemma}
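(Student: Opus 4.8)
Since the final statement is \cite[Theorem 5.4]{BS}, I merely indicate how one would prove it; the plan is to turn the Beatty constraint into additive exponential sums and then invoke the distribution of $\Lambda$ along arithmetic progressions twisted by such characters.

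First I would change variables from $n$ to the Beatty value $m=\lfloor\beta n+\gamma\rfloor$. Because $\beta>1$ is irrational, each $m$ is attained at most once, and as $n$ runs over $1\le n\le N$ the value $m$ runs over the elements of $\mathcal{A}=\{\lfloor\beta k+\gamma\rfloor\}$ up to $\lfloor\beta N+\gamma\rfloor$. Using the identity $\mathbf{1}_{\mathcal{A}}(m)=\beta^{-1}+\psi\bigl((m-\gamma)/\beta\bigr)-\psi\bigl((m+1-\gamma)/\beta\bigr)$, where $\psi(t)=\{t\}-\tfrac12$ is the sawtooth function, the sum in the lemma becomes
\[
\sum_{\substack{m\le \lfloor\beta N+\gamma\rfloor\\ m\equiv a\pmod q}}\Lambda(m)\,\mathbf{1}_{\mathcal{A}}(m)=\frac1\beta\sum_{\substack{m\le \lfloor\beta N+\gamma\rfloor\\ m\equiv a\pmod q}}\Lambda(m)+\sum_{\substack{m\le \lfloor\beta N+\gamma\rfloor\\ m\equiv a\pmod q}}\Lambda(m)\,\Delta_\beta(m),
\]
where $\Delta_\beta(m)=\psi\bigl((m-\gamma)/\beta\bigr)-\psi\bigl((m+1-\gamma)/\beta\bigr)$ and the first term on the right is exactly the desired main term.

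Next I would insert the Vinogradov/Vaaler trigonometric approximation $\psi(t)=\sum_{1\le|h|\le H}c_h\,e(ht)+O\bigl(\min(1,(H\|t\|)^{-1})\bigr)$ with $|c_h|\ll|h|^{-1}$ and $e(t)=e^{2\pi i t}$; after summation the $O$-term contributes at most $O\bigl(N(\log N)/H\bigr)$ (averaging $m$ over one residue class modulo $q$ costs nothing here). What survives is a combination, with coefficients $\ll|h|^{-1}$, of the twisted sums $\sum_{m\le X,\ m\equiv a\pmod q}\Lambda(m)\,e(hm/\beta)$ for $1\le|h|\le H$ and $X\asymp\beta N$. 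Detecting the congruence with the Dirichlet characters modulo $q$ reduces this to bounding $\sum_{m\le X}\Lambda(m)\chi(m)\,e(\alpha m)$ with $\alpha=h/\beta$, which I would estimate via Vaughan's identity together with the standard Type~I and Type~II sum bounds. The decisive input is that, by Dirichlet's theorem, $\alpha$ has a rational approximation $b/r$ with $r\le X^{1/2}$ and $|\alpha-b/r|\le (rX^{1/2})^{-1}$, while the finite type of $\beta$—and, since $\beta$ has bounded partial quotients, type $1$ by Lemma \ref{boundpq}, which itself rests on Lemma \ref{continuedfraction}—keeps the denominator $r$ away from both extremes for every $h$ with $|h|\le H$, giving a power saving $X^{1-c_0}$ for some $c_0>0$ depending only on $\beta$.

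Finally I would optimise the parameters: take $H=N^{c_1}$ so that $O\bigl(N(\log N)/H\bigr)$ is acceptable, and take $\kappa$ small enough that the factor $q\le N^{\kappa}$ lost in the character detection (along with the $(\log N)$-powers and the convergent sum $\sum_h|h|^{-1}$) is absorbed by $X^{1-c_0}$; collecting the pieces, and noting that replacing the cutoff $X$ by $\lfloor\beta N+\gamma\rfloor$ in the main term costs only a further $O(N^{1-\kappa})$, yields the stated identity with error $O(N^{1-\kappa})$. The main obstacle is precisely this last exponential-sum estimate: obtaining a power saving for $\sum_{m\le X}\Lambda(m)\chi(m)e(\alpha m)$ \emph{uniformly} in the modulus $q$ and in the frequency $h$, by marrying Vaughan's identity with the Diophantine control supplied by the finite type of $\beta$, is the technical heart of the argument, and is exactly what \cite{BS} carries out.
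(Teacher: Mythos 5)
This lemma is not proved in the paper at all: it is imported verbatim as \cite[Theorem 5.4]{BS}, and your sketch correctly reconstructs the method of that source --- sawtooth expansion of the Beatty indicator, Vaaler/Vinogradov approximation, character detection of the congruence, and Vaughan's identity driven by the Diophantine (finite type) properties of $\beta$ --- deferring, as is legitimate here, the uniform exponential-sum estimate to \cite{BS}. One small point: the hypothesis of the lemma is only that $\beta$ has finite type, so your appeal to bounded partial quotients and Lemma \ref{boundpq} is not needed for this statement (that reduction is only relevant where the paper invokes Lemma \ref{le5-1}).
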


Conditions (1) and (3) in {\bf Hypothesis 1} can be verified by Lemma \ref{le5-1}. Unluckily, when we need to verify condition (2) of {\bf Hypothesis 1}, both Lemma \ref{le5-2} and Lemma \ref{le5-3} can not be applicable.
However, we can establish a variant of Lemma \ref{le5-2} and Lemma \ref{le5-3} to break this barrier.

\begin{theorem}\label{th5}
Let $\beta>1$ and $\gamma$ be fixed real numbers with $\beta$ irrational, positive and of finite type. Then there is a constant $\kappa_1$ (depending on $\beta$) such that for all integers $0\leqslant a_j\leqslant q_j\leqslant N^{\kappa_1}~~(j=1,2)$ with $(a_2,q_2)=1$ and $(q_2a_1+a_2,q_1)=1$, we have
$$\sum_{\substack{n\leqslant N\\ \lfloor \beta n+\gamma\rfloor\equiv a_1\pmod{q_1} }}\Lambda\left(q_2\lfloor \beta n+\gamma\rfloor+a_2\right)=\beta^{-1}\sum_{\substack{m\leqslant\lfloor \beta N+\gamma\rfloor\\m\equiv a_1\pmod{q_1}}}\Lambda\left(q_2m+a_2\right)+O\left(N^{1-\kappa_1}\right),$$
where the implied constant depends only on $\beta$ and $\gamma$.
\end{theorem}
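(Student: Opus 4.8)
The plan is to adapt the Banks--Shparlinski method underlying Lemmas \ref{le5-2} and \ref{le5-3} so as to carry the two moduli $q_1,q_2$ simultaneously. The first step is to replace the sum over the Beatty sequence by a sum over a genuine arithmetic progression of integers. Since $\beta>1$, for each integer $m$ the number of $n\in\mathbb{Z}$ with $\lfloor\beta n+\gamma\rfloor=m$ is $0$ or $1$, namely the number of integers in $[(m-\gamma)/\beta,(m+1-\gamma)/\beta)$; writing $\lfloor t\rfloor=t-\tfrac12-\psi_*(t)$ with $\psi_*(t):=\{t\}-\tfrac12$ this count equals $\beta^{-1}+\psi_*\big(\tfrac{m-\gamma}{\beta}\big)-\psi_*\big(\tfrac{m+1-\gamma}{\beta}\big)$ (the integer-argument cases being harmless, as $\beta$ is irrational). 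Hence, with $M:=\lfloor\beta N+\gamma\rfloor$,
\begin{align*}
\sum_{\substack{n\le N\\ \lfloor\beta n+\gamma\rfloor\equiv a_1\pmod{q_1}}}\Lambda\big(q_2\lfloor\beta n+\gamma\rfloor+a_2\big)
&=\frac1\beta\sum_{\substack{m\le M\\ m\equiv a_1\pmod{q_1}}}\Lambda(q_2m+a_2)\\
&\quad+\sum_{\substack{m\le M\\ m\equiv a_1\pmod{q_1}}}\Lambda(q_2m+a_2)\Big(\psi_*\big(\tfrac{m-\gamma}{\beta}\big)-\psi_*\big(\tfrac{m+1-\gamma}{\beta}\big)\Big).
\end{align*}
The first term on the right is exactly the claimed main term, so everything reduces to bounding the $\psi_*$--twisted sum by $O(N^{1-\kappa_1})$.

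Next I would linearise $\psi_*$ via the Vaaler approximation: for a truncation parameter $H$ there are coefficients $c_h\ll 1$ and a nonnegative trigonometric polynomial $\Delta_H$ of degree $\le H$ with $\widehat{\Delta_H}(h)\ll H^{-1}$ such that $\big|\psi_*(t)-\sum_{0<|h|\le H}\tfrac{c_h}{h}e(ht)\big|\le\Delta_H(t)$, where $e(t):=e^{2\pi i t}$. Expanding $\Delta_H$ into its own Fourier series, its contribution is bounded by $\widehat{\Delta_H}(0)\sum_{m\le M}\Lambda(q_2m+a_2)\ll N/H$ plus sums of exactly the same shape as the principal ones. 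Thus everything comes down to estimating, for $1\le|h|\le H$,
\[
T_h:=\sum_{\substack{m\le M\\ m\equiv a_1\pmod{q_1}}}\Lambda(q_2m+a_2)\,e\!\Big(\frac{hm}{\beta}\Big),
\]
the companion sum arising from $\psi_*\big(\tfrac{m+1-\gamma}{\beta}\big)$ being identical up to the harmless factor $e(-h/\beta)$.

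To bound $T_h$, substitute $n=q_2m+a_2$. The hypotheses $(a_2,q_2)=1$ and $(q_2a_1+a_2,q_1)=1$ together give precisely $(q_2a_1+a_2,\,q_1q_2)=1$, so as $m$ runs over $m\equiv a_1\pmod{q_1}$ the variable $n$ runs over the single residue class $q_2a_1+a_2$ modulo $Q:=q_1q_2\le N^{2\kappa_1}$, coprime to $Q$. Detecting this class with additive characters modulo $Q$ writes $T_h$ as a combination of at most $Q$ linear von Mangoldt sums $\sum_{n\le q_2M+a_2}\Lambda(n)\,e(\alpha n)$ with $\alpha=\frac{h}{q_2\beta}+\frac{c}{Q}$, $0\le c<Q$. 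For each such $\alpha$ take, by Dirichlet, $a'/q'$ with $(a',q')=1$, $q'\le N^{1-\eta}$ and $|\alpha-a'/q'|<(q'N^{1-\eta})^{-1}$, and apply the classical Vinogradov--Vaughan bound $\sum_{n\le x}\Lambda(n)e(\alpha n)\ll(xq'^{-1/2}+x^{4/5}+(xq')^{1/2})(\log x)^4$. This gives a power saving once $q'\ge N^{\eta}$; the range $q'<N^{\eta}$ cannot occur, since it would force $h/(q_2\beta)$ to lie within $N^{-1+O(\eta)}$ of a rational of denominator $\ll q'Q\ll N^{\eta+2\kappa_1}$, hence $\|r/\beta\|\ll r\,N^{-1+O(\eta+\kappa_1)}$ for some $r\ll HN^{\eta+2\kappa_1}$, contradicting the finite-type bound $\|m/\beta\|\gg_\varepsilon m^{-\tau-\varepsilon}$ for $1/\beta$ (whose type equals that of $\beta$, and equals $1$ when $\beta$ has bounded partial quotients, by Lemma \ref{boundpq}) once $\eta$, $\kappa_1$ and $H$ are chosen as sufficiently small powers of $N$ depending only on $\tau$. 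Summing over $c$, over $1\le|h|\le H$, and balancing $H$ against the power saving yields the $\psi_*$--twisted sum $\ll N^{1-\kappa_1}$ for a suitable $\kappa_1>0$ depending on $\beta$, which is the theorem.

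The main obstacle is the joint treatment of $q_1$ and $q_2$: one must verify that the substitution $n=q_2m+a_2$ genuinely lands in a \emph{unit} residue class to a modulus that is still a small power of $N$ (this is where both coprimality hypotheses are used), and then track how the extra factor $q'Q$ entering the minor-arc denominator erodes the finite-type exponent, which is exactly what dictates how small $\kappa_1$ and the Vaaler length $H$ must be taken in terms of $\tau$. Nothing genuinely new beyond this bookkeeping is required: specialising $q_1=1$ (resp.\ $q_2=1$) recovers Lemmas \ref{le5-2} and \ref{le5-3} with the same error term $O(N^{1-\kappa_1})$.
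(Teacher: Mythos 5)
Your proposal is correct and follows essentially the paper's own route: the paper proves this theorem by rerunning Banks--Shparlinski's proof of Lemma \ref{le5-2} with precisely your substitution $n=q_2m+a_2$, which turns the condition $m\equiv a_1\pmod{q_1}$ into the single reduced residue class $q_2a_1+a_2\pmod{q_1q_2}$ --- the role you correctly assign to the two coprimality hypotheses. The extra details you supply (the sawtooth/Vaaler reduction of the Beatty condition, additive characters modulo $q_1q_2$, Vaughan's bound for $\sum_{n\le x}\Lambda(n)e(\alpha n)$, and the finite-type exclusion of small denominators) are just the internals of the Banks--Shparlinski argument that the paper cites rather than reproduces.
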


The proof of Theorem \ref{th5} is actually a trivial adjustment of the proof of  \cite[Theorem 5.1]{BS} (i.e., Lemma \ref{le5-2}) by making the following replacement in it.
$$\sum_{\substack{m\leqslant M\\m\equiv a_1\pmod{q_1}}}\!\!\!\Lambda(q_2m+a_2)e(\beta km)\xlongequal{n=q_2m+a_2}e\left(-\frac{\beta ka_2}{q_2}\right)\!\!\!\!\sum_{\substack{n\leqslant q_2M+a_2\\n\equiv q_2a_1+a_2\pmod{q_1q_2}}}\!\!\!\Lambda(n)e\left(\frac{\beta kn}{q_2}\right).$$
We remark that in this equality the additional condition $(q_2a_1+a_2,q_1)=1$ appeared in Theorem \ref{th5} is necessary, which shall arise naturally in the verification of condition (2) in {\bf Hypothesis 1}.  Summarizing the above analysis and statements, it can be concluded as the following theorem.
\begin{theorem}\label{outline}
Let $\beta>1$ and $0\leqslant\gamma<\beta$ be fixed real numbers such that $\beta$ is irrational with bounded partial quotients.  For $\alpha >0$, let $$\mathscr{A}=\{ a_1<a_2<a_3<\cdots\}$$ and $$\mathscr{L}=\{ \ell_1, \ell_2, \ell_3,\cdots\} \quad \text{(not~necessarily~different)}$$ be two sequences of positive
integers with $\mathscr{A}(m)>(\log m)^\alpha $ for infinitely
many positive integers $m$ and $\ell_m<0.9\log\log m$ for sufficiently large integers $m$. Then we have
$$\limsup_{\substack{n\to \infty \\ n\in \mathcal{A}}} \frac{f_{\mathscr{A},\mathscr{L}}(n)}{\log\log n}>0$$
where $\mathcal{A}=\left\{\lfloor \beta k+\gamma\rfloor\right\}_{k=1}^{\infty}$ is the Beatty sequence.
\end{theorem}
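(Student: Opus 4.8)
The plan is to rerun the chain Proposition~\ref{Pro}~$\Rightarrow$~Theorem~\ref{M}~$\Rightarrow$~Theorem~\ref{thm1}~$\Rightarrow$~Corollary~\ref{cor1} with the ambient set $\mathbb{Z}$ replaced throughout by a Beatty sequence of the shape $\{\lfloor\beta j+\delta\rfloor\}_{j\ge1}$, $\delta\in[0,\beta)$, and with $\theta=\frac13$ replaced by a fixed $\theta\in(0,\frac12)$ small enough in terms of $\beta$. Three preliminary remarks make this reduction clean. (i) Theorems~\ref{Maynard} and~\ref{Maynard1} are stated for an \emph{arbitrary} set $\mathcal{A}$ of integers, so once \textbf{Hypothesis~1} is verified for such a Beatty set the deductions of Theorems~\ref{M},~\ref{thm1} and Corollary~\ref{cor1} become purely formal; here $\mathcal{A}[x]=x/\beta+O(1)$, and the factor $\beta^{-1}$ occurs in both $\mathcal{A}[x]$ and $\mathcal{P}_{L_i,\mathcal{A}}[x]$ and cancels in the $\delta$-inequality, so the final counting estimate is unchanged up to the harmless constant $\beta^{-1}$. (ii) The proof of Theorem~\ref{thm1} produces its integer in the shifted form $\widetilde n=n+a_t$; to keep $\widetilde n$ inside the target sequence $\mathcal{A}=\{\lfloor\beta k+\gamma\rfloor\}$ one runs the argument with $n$ ranging over $\mathcal{A}-a_t$, and since $\lfloor\beta k+\gamma\rfloor-a_t=\lfloor\beta(k-c)+\delta\rfloor$ for a suitable integer $c$ and some $\delta\in[0,\beta)$, the set $\mathcal{A}-a_t$ agrees in all sufficiently large members with the Beatty sequence $\{\lfloor\beta j+\delta\rfloor\}_{j\ge1}$; the constants depending on the offset in the Beatty estimates below are uniform for the offset in $[0,\beta)$, so nothing is lost by $\delta$ depending on $a_t$. (iii) As in the proof of Corollary~\ref{cor1} we replace $\alpha$ by $\alpha_1=\min\{\alpha,\frac15\}$, so that $0<\alpha_1<\frac14$ and the hypothesis of Theorem~\ref{M} is met.

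Thus the only substantive step is to re-establish Proposition~\ref{Pro} with $\mathcal{A}=\{\lfloor\beta j+\delta\rfloor\}_{j\ge1}$ in place of $\mathbb{Z}$. Since $\beta$ has bounded partial quotients, Lemma~\ref{boundpq} shows $\beta$ is of type $1$, hence of finite type, so Lemmas~\ref{le5-1},~\ref{le5-2},~\ref{le5-3} and Theorem~\ref{th5} all apply. Conditions~$(1)$ and~$(3)$ of \textbf{Hypothesis~1} are immediate from Lemma~\ref{le5-1}: translating $x\le\lfloor\beta j+\delta\rfloor<2x$ into ``$j$ in an interval of length $\asymp x/\beta$'', that lemma gives $\mathcal{A}[x;q,a]=\mathcal{A}[x]/q+O(q\log^3 x)$, whence
$$\sum_{q\le x^{\theta}}\max_{a}\left|\mathcal{A}[x;q,a]-\frac{\mathcal{A}[x]}{q}\right|\ll x^{2\theta}\log^3 x\ll\frac{\mathcal{A}[x]}{(\log x)^{100k^2}},$$
because $\theta<\frac12$ while $k\le(\log x)^{\alpha_1}$ with $\alpha_1<\frac14$ forces $(\log x)^{100k^2}\ll\exp(c\sqrt{\log x})$; the pointwise bound in~$(3)$ is read off the same asymptotic. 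The lower bound $\mathcal{P}_{L_i,\mathcal{A}}[x]\gg x/\log x$ needed for the $\delta$-inequality of Theorem~\ref{Maynard1} follows from the $q_1=1$ case of Theorem~\ref{th5} (equivalently Lemma~\ref{le5-2}) together with the prime number theorem in arithmetic progressions.

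\emph{Condition~$(2)$ is the main obstacle.} One must bound, uniformly in $i$,
$$\sum_{\substack{q\le x^{\theta}\\(q,B)=1}}\max_{(L_i(a),q)=1}\left|\mathcal{P}_{L_i,\mathcal{A}}[x;q,a]-\frac{\mathcal{P}_{L_i,\mathcal{A}}[x]}{\varphi_{L_i}(q)}\right|,$$
where $\mathcal{P}_{L_i,\mathcal{A}}[x;q,a]$ counts integers $j$ with $\lfloor\beta j+\delta\rfloor\in[x,2x)$, $\lfloor\beta j+\delta\rfloor\equiv a\pmod{q}$ and $u_i\lfloor\beta j+\delta\rfloor+v_i$ prime. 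After reducing $v_i$ modulo $u_i$ and absorbing the resulting bounded shifts into the summation variable and into $\delta$ (an $O(1)$ change), one passes to the von Mangoldt weight and applies Theorem~\ref{th5} with $(q_1,a_1)=(q,a)$ and $(q_2,a_2)=(u_i,\,v_i\bmod u_i)$; this replaces the Beatty-weighted count by $\beta^{-1}$ times the ordinary progression sum $\sum_{m\equiv a\,(q)}\Lambda(u_i m+v_i)$ over the matching dyadic range, with error $O(x^{1-\kappa_1})$ for each $q$ and hence $O(x^{1-\kappa_1+\theta})$ after summation over $q\le x^\theta$, which is negligible once $\theta<\min\{\frac12,\kappa_1\}$. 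The extra hypothesis $(q_2a_1+a_2,q_1)=1$ of Theorem~\ref{th5} is, after the shift, precisely the coprimality $(L_i(a),q)=1$ already present in~$(2)$, so it is automatically satisfied. What remains is exactly the sum estimated in Part~III of the proof of Proposition~\ref{Pro}: fix $B$ by the Landau--Page theorem to absorb the possible exceptional character, use $P(u_i)=P(\ell_i)<0.9\log\log x<B$ (with $B$ prime) to guarantee $(B,u_i)=1$, and invoke the Siegel--Walfisz theorem together with the Bombieri--Vinogradov large-sieve bounds of~\cite{Da}; this yields a bound $\ll x\exp(-c\sqrt{\log x})$, which dominates the required $\mathcal{P}_{L_i,\mathcal{A}}[x]/(\log x)^{100k^2}\gg x(\log x)^{-1-100k^2}$ precisely because $k\le(\log x)^{\alpha_1}$ with $\alpha_1<\frac14$. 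I expect this to be the delicate point: one must check that the change of variables in Theorem~\ref{th5} is compatible with first maximising over residues $a$ and then summing over all moduli $q\le x^\theta$, and that the accumulated errors remain below the very stringent threshold $(\log x)^{-100k^2}$; this is what forces $\theta$ below both $\frac12$ and the Beatty constant $\kappa_1$, and uses $\alpha_1<\frac14$ in an essential way.

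With the Beatty analogue of Proposition~\ref{Pro} established, Theorem~\ref{Maynard1} delivers the Beatty analogue of Theorem~\ref{M} (primes clustering among the values $u_i n+v_i$ for $n$ in the Beatty sequence $\mathcal{A}-a_t$), and then the proofs of Theorem~\ref{thm1} and Corollary~\ref{cor1} go through verbatim: for infinitely many $m$ with $\mathscr{A}(m)>(\log m)^\alpha$ one obtains an integer $\widetilde n=n+a_t\in\mathcal{A}$, of size $\asymp(m\log\log m)^{1/\alpha_1}$, with $f_{\mathscr{A},\mathscr{L}}(\widetilde n)\ge c'\log\log\widetilde n$. Letting $m\to\infty$ then gives
$$\limsup_{\substack{n\to\infty\\ n\in\mathcal{A}}}\frac{f_{\mathscr{A},\mathscr{L}}(n)}{\log\log n}\ge c'>0,$$
which is the assertion of Theorem~\ref{outline}.
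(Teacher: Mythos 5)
Your proposal follows the same route as the paper's own (sketched) argument: verify \textbf{Hypothesis 1} for the Beatty sequence via Lemma \ref{le5-1} (conditions (1) and (3)) and Theorem \ref{th5} (condition (2)), then rerun the chain Proposition \ref{Pro} $\Rightarrow$ Theorem \ref{M} $\Rightarrow$ Theorem \ref{thm1} $\Rightarrow$ Corollary \ref{cor1} with $\mathcal{A}$ the Beatty sequence and a suitable fixed $\theta$. Your extra bookkeeping --- re-indexing the Beatty sequence to an offset $\delta\in[0,\beta)$ so that the shift by $a_t$ stays inside the sequence, matching the coprimality condition $(q_2a_1+a_2,q_1)=1$ with $(L_i(a),q)=1$, and requiring $\theta<\min\{\tfrac12,\kappa_1\}$ --- is consistent with, and somewhat more explicit than, what the paper leaves implicit in its outline.
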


Perhaps, some further refinements of our previous results can be taken in the scene of the  Piatetski--Shapiro  sequences \cite{PSh} $\lfloor n^c\rfloor$ for some $c$ slightly larger than $1$ due to the well--distributed property of its elements. We shall admit that we do not proceed this work. We believe that an elaborate treatment of the exponential sums involving the Piatetski--Shapiro sequences $\lfloor n^c\rfloor$ to make the constant $c$ as large as possible would be of some interest.

\section{Final remarks}

There is an outstanding conjecture named {\it Prime k-tuples conjecture} of Dickson which states as following.

\begin{conjecture}[weak form] Let $a_1,a_2,...,a_k$ be an admissible set, then there are infinitely many integers $n$ such that all of $n+a_1,n+a_2,...,n+a_k$ are primes.
\end{conjecture}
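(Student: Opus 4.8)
This final statement is Dickson's prime $k$-tuples conjecture (in the quantitative form due to Hardy and Littlewood), and it is open for every $k\ge 2$; the case $k=1$ is Dirichlet's theorem on primes in an admissible single progression. So the only honest ``plan'' is to describe the heuristic that predicts it, the partial results that approach it, and the reason a complete proof is not in reach.

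The natural analytic approach is the circle method: one studies $\sum_{n\le N}\Lambda(n+a_1)\cdots\Lambda(n+a_k)$ and splits the integral over $\mathbb{T}$ into major and minor arcs. On the major arcs the standard computation produces the expected main term $\bigl(\mathfrak{S}(a_1,\dots,a_k)+o(1)\bigr)N$, where
\[
\mathfrak{S}(a_1,\dots,a_k)=\prod_{p}\Bigl(1-\tfrac1p\Bigr)^{-k}\Bigl(1-\tfrac{\nu_p}{p}\Bigr),
\]
$\nu_p$ being the number of residue classes modulo $p$ occupied by $\{a_1,\dots,a_k\}$; admissibility of the tuple is precisely the statement $\mathfrak{S}>0$, so positivity of the main term is automatic. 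The hard part --- the part nobody can do for $k\ge2$ --- is bounding the minor-arc contribution by $o(N)$: this requires cancellation in exponential sums over primes of a strength that is essentially equivalent to the conjecture itself, so as it stands the circle method delivers only the heuristic, not a theorem.

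The sieve-theoretic route is the one that has actually produced progress. Using the GPY sieve in the form refined by Maynard and Tao --- of which Theorem \ref{Maynard} above is a representative instance --- one obtains, for $k$ large enough, a positive proportion of $n\le N$ for which at least $m=m(k)\to\infty$ of the values $n+a_1,\dots,n+a_k$ are simultaneously prime. This suffices for bounded gaps between primes, but $m(k)$ is only of size $\asymp\log k$, and forcing \emph{all} $k$ values to be prime would require a level of distribution $\theta$ well past the Bombieri--Vinogradov range $\theta<1/2$; even the full Elliott--Halberstam conjecture ($\theta\to1$) yields only boundedly many prime values in the tuple, not all of them, so this route also stalls short of the conjecture.

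The main obstacle in both approaches is, at bottom, the \emph{parity problem} of sieve theory: a sieve cannot separate integers with an even number of prime factors from those with an odd number, so no purely sieve-theoretic argument can isolate the event that every single $n+a_i$ is prime rather than merely a $P_2$ or $P_3$, and the circle method cannot supply the missing bilinear (Type II) information about primes on the relevant arcs. Overcoming this would require a genuinely new input beyond current technology. Accordingly I would not claim a proof; the most one can honestly record is the conditional picture (the Hardy--Littlewood asymptotic itself, or an Elliott--Halberstam-type hypothesis in the $k=2$ direction via Maynard's work) together with the unconditional ``$m$ out of $k$'' results already established above.
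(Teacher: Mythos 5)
You are right not to offer a proof: this statement is Dickson's prime $k$-tuples conjecture, which the paper itself records only as a conjecture in its final remarks (as motivation for its Conjecture 6.2) and for which no proof is given or known. Your assessment of the situation --- the Hardy--Littlewood heuristic via the singular series, the Maynard--Tao ``$m$ out of $k$'' results such as Theorem \ref{Maynard}, and the parity obstruction blocking both the circle method and sieve approaches --- is accurate and consistent with how the paper uses the statement.
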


Based on the conjecture of Dickson and recent progress towards this direction, especially the result of Maynard \cite[Theorem 3.1]{Ma2}, we can make the following reasonable conjecture. It can be viewed as an extension of the Dickson conjecture.

\begin{conjecture}\label{conj1} For any arbitrarily small constant $\delta>0$, there exists an integer $x_0=x_0 (\delta )$ such that
for every $x\ge x_0$, if $a_1,a_2,...,a_k$ is an admissible set
with $k\le (\log x)^{1-\delta }$ and $a_i\leqslant x^{1-\delta }$,
then there is a positive integer $n\le x$
 such that all of the $k$ shifts $n+a_1,n+a_2,...,n+a_k$
are primes.
\end{conjecture}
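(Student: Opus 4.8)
As Conjecture \ref{conj1} is stated as an open problem, I sketch the natural line of attack on it together with the point at which it stalls. The plan is to lower bound the counting function
$$N(x;\mathcal{H})=\#\{\, n\le x:\ n+a_i\in\mathbb{P}\ \text{for all}\ 1\le i\le k\,\},\qquad \mathcal{H}=\{a_1,\dots,a_k\},$$
and to show $N(x;\mathcal{H})\ge 1$ whenever $\mathcal{H}$ is admissible with $k\le(\log x)^{1-\delta}$ and $a_i\le x^{1-\delta}$. The first step is to fix the Hardy--Littlewood heuristic target,
$$N(x;\mathcal{H})\ \approx\ \mathfrak{S}(\mathcal{H})\,\frac{x}{(\log x)^{k}},\qquad \mathfrak{S}(\mathcal{H})=\prod_{p}\Big(1-\frac{\nu_{\mathcal{H}}(p)}{p}\Big)\Big(1-\frac1p\Big)^{-k},\quad \nu_{\mathcal{H}}(p)=\#\{a_i\bmod p\}.$$
Admissibility forces $\nu_{\mathcal{H}}(p)\le p-1$, so each factor of $\mathfrak{S}(\mathcal{H})$ is positive; splitting the product at $p=k$, using $\nu_{\mathcal{H}}(p)\le\min(k,p-1)$, and applying Mertens' theorem gives $\mathfrak{S}(\mathcal{H})\ge e^{-O(k)}=x^{-o(1)}$ in the range $k\le(\log x)^{1-\delta}$. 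Since also $(\log x)^{k}=\exp(k\log\log x)=x^{o(1)}$ there, the heuristic predicts $N(x;\mathcal{H})\ge x^{1-o(1)}$, vastly more than the single integer we are asked to produce; the hypothesis $a_i\le x^{1-\delta}$ only keeps each $n+a_i$ in a range where primes have density $\asymp 1/\log x$ uniformly. So heuristically the statement is immediate, and the whole task is its rigorous realisation.

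The only rigorous mechanism at hand is the GPY--Maynard--Tao weighted sieve that powers Theorem \ref{Maynard}, hence Theorem \ref{M}, of this paper. One attaches nonnegative weights $w_n=\big(\sum_{d_i\mid n+a_i}\lambda_{d_1,\dots,d_k}\big)^{2}$ to the integers $n\in[x,2x)$ and attempts to choose the $\lambda$'s --- using a level of distribution $\theta$ for primes in arithmetic progressions, with $\theta=1/2$ from Bombieri--Vinogradov, or in our setting the data checked in Proposition \ref{Pro} --- so that
$$\sum_{x\le n<2x} w_n\Big(\sum_{i=1}^{k}\mathbf{1}_{\mathbb{P}}(n+a_i)-(k-1)\Big)\ >\ 0.$$
Positivity of this sum would immediately exhibit an $n$ with all $k$ of the $n+a_i$ prime (and, by the usual averaging, $\gg x/((\log x)^{k}e^{Ck})$ such $n$ up to $x$, matching the heuristic order), and the argument would close exactly as in Theorem \ref{Maynard1}.

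The obstacle --- and the reason the assertion is a conjecture --- is that the optimisation of this quadratic form provably cannot reach the displayed inequality. With $\theta=1/2$, and even with the Elliott--Halberstam level $\theta\to1$ or under GRH, the best attainable value of
$$\frac{\sum_{x\le n<2x} w_n\sum_{i=1}^{k}\mathbf{1}_{\mathbb{P}}(n+a_i)}{\sum_{x\le n<2x} w_n}$$
is of order $\log k$; this is precisely what produces the bound ``$\ge C^{-1}\delta\log k$'' in Theorem \ref{Maynard} and ``$\ge C_0^{-1}\log k$'' in Theorem \ref{M}, and no known refinement of the method improves the \emph{order} $\log k$ --- only the implied constant. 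Pushing this ratio from $\log k$ up to $k$ means detecting an entire prime $k$-tuple at once, i.e.\ establishing a strong, uniform, quantitative form of Dickson's prime $k$-tuples conjecture, whose $k=2$ case alone is the twin prime conjecture. A genuine proof would therefore demand an essentially new ingredient --- for instance the Hardy--Littlewood asymptotic for $k$-tuples with a power-saving error term, uniform in $k\le(\log x)^{1-\delta}$ and in $\max_i a_i\le x^{1-\delta}$ --- which is far beyond present technology. So my proposal is, in the end, the observation that Conjecture \ref{conj1} is equivalent in strength to the prime $k$-tuples conjecture: the sieve step above is exactly where any attempt stalls, and I do not expect an unconditional proof with current methods.
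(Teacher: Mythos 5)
The statement you were asked about is Conjecture \ref{conj1} of the paper: it is presented without proof, as a ``reasonable'' uniform extension of Dickson's prime $k$-tuples conjecture motivated by Maynard's Theorem 3.1, and it is used only conditionally (in place of Theorem \ref{M}) to deduce $\limsup_{n\to\infty} f_{\mathscr{A}}(n)/(\log n)^{1-\delta}>0$. Your treatment is therefore the right one: you correctly decline to offer a proof, you give the Hardy--Littlewood singular-series heuristic that makes the statement plausible in the stated ranges $k\le(\log x)^{1-\delta}$, $a_i\le x^{1-\delta}$, and you locate the genuine obstruction --- the Maynard--Tao weighted-sieve optimisation only yields $\gg\log k$ primes among the $k$ shifts, never all $k$ of them, regardless of the level of distribution --- which is exactly why the paper states this as a conjecture and why it quotes Erd\H{o}s on the depth of the related $f(n)=o(\log n)$ problem. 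One caveat: your closing claim that the statement is ``equivalent in strength to the prime $k$-tuples conjecture'' is too strong in one direction. The conjecture here is a uniform, quantitative strengthening: it implies Dickson's conjecture for every fixed admissible tuple (take $x\to\infty$), but the qualitative Dickson statement does not obviously give the required uniformity in $k$ up to $(\log x)^{1-\delta}$ and in shifts up to $x^{1-\delta}$; the paper accordingly calls it an \emph{extension} of Dickson's conjecture rather than an equivalent form.
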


Replacing Theorem \ref{M} by Conjecture \ref{conj1}, we shall have
$$\limsup_{n\rightarrow\infty}\frac{f_{\mathscr{A}}(n)}{(\log n)^{1-\delta}}>0.$$
Certainly, it covers the result $$\limsup_{n\rightarrow\infty}\frac{f(n)}{(\log n)^{1-\delta}}>0.$$
However, the decision of whether $f(n)=o(\log n)$ seems to be far beyond the present mathematics, even under the strong assumptions. We end this article by quoting the original words of Erd\H os. `{\it The conjecture $f(n)=o(\log n)$ if true is probably rather deep}'.

\section*{Funding}
The first author is supported by the National Natural Science
Foundation of China, Grant No. 12171243. The second author is supported by National Natural Science Foundation of China under Grant No. 12201544, Natural Science Foundation of Jiangsu Province, China, Grant No. BK20210784, China Postdoctoral Science Foundation, Grant No. 2022M710121, the foundations of the projects "Jiangsu Provincial Double--Innovation Doctor Program'', Grant No. JSSCBS20211023 and "Golden  Phoenix of the Green City--Yang Zhou'' to excellent PhD, Grant No. YZLYJF2020PHD051.

\begin{acknowledgment}
The authors would like to thank the anonymous referee for his/her helpful comments which improve the quality of their article greatly.
\end{acknowledgment}


\begin{thebibliography}{KMP}
\bibitem{BS} W. Banks, I. Shparlinski, {\it Prime numbers with Beatty sequences}, Colloq. Math., {\bf115} (2009), 147--157.
\bibitem{BG} A. V. Begunts, D. V. Goryashin, {\it On the values of Beatty sequence in an arithmetic progression}, Chebyshevskii Sb., {\bf 21} (2020), 364--367.
\bibitem{Ch} Y.--G. Chen, {\it Romanoff theorem in a sparse set}, Sci. China Math., {\bf 53} (2010), 2195--2202.
\bibitem{CD} Y.--G. Chen, Y. Ding, {\it On a conjecture of Erd\H os}, C. R. Math. Acad. Sci. Paris, {\bf 360} (2022), 971--974.
\bibitem{CS} Y.--G. Chen, X.--G. Sun, {\it On Romanoff's constant}, J. Number Theory, {\bf106} (2004), 275--284.
\bibitem{va} J. G. van der Corput, {\it On de Polignac's conjecture}, Simon Stevin, {\bf27} (1950), 99--105.
\bibitem{Da} H. Davenport, {\it Multiplicative Number Theory}, Second edition, Graduate Texts in Mathematics 74, Springer-Verlag, New York, 1980.
\bibitem{DDD} G. M. Del Corso, I. Del Corso, R. Dvornicich, F. Romani, {\it On computing the density of integers of the form $2^n+p$}, Math. Comp., {\bf89} (2020), 2365--2386.
\bibitem{Di} Y. Ding, {\it Extending an Erd\H{o}s result on a Romanov type problem}, Arch. Math. (Basel), {\bf 118} (2022), 587--592.
\bibitem{DZ} Y. Ding, G.--L. Zhou, {\it Some application of the admissible set}, Amer. Math. Monthly, https://doi.org/10.1080/00029890.2023.2184621.
\bibitem{Els} C. Elsholtz, {\it Upper bounds for prime $k$--tuples of size $\log N$ and oscillations,} Arch. Math. (Basel), {\bf 82} (2004), 33--39.
\bibitem{ES} C. Elsholtz, J.--C. Schlage--Puchta, {\it On Romanov's constant}, Math. Z., {\bf288} (2018), 713--724.
\bibitem{Er1} P. Erd\H{o}s, {\it On the integers of the form $2^k+p$ and some related problems}, Summa Brasil. Math., {\bf 2} (1950), 113--123.
\bibitem{FI} J. B. Friedlander, H. Iwaniec, {\it Opera de Cribro}, Colloq. Publications, vol. 57, Amer. Math. Soc., Providence, Rhode Island, 2010.
\bibitem{HL} L. Habsieger, X. Roblot, {\it On integers of the form $p+2^k$}, Acta Arith., {\bf 122} (2006), 45--50.
\bibitem{HS} L. Habsieger, J. Sivak-Fischler,
{\it An effective version of the Bombieri-Vinogradov theorem, and applications to Chen's theorem and to sums of primes and powers of two},
Arch. Math. (Basel), {\bf95} (2010), 557--566.
\bibitem{Hardy} G. H. Hardy, E. M. Wright, {\it An intrduction to the theory of numbers}, Oxford Univ. Press, 1979.
\bibitem{Ho} C. Hooley, {\it Applications of sieve methods to the theory of numbers}. Cambridge Tracts in Mathematics, No. 70. Cambridge University Press, Cambridge-New York-Melbourne, 1976.
\bibitem{Lu} Y. Lu, {\it On almost primes of the form $p+2^l$}, J. Number Theory, {\bf 204} (2019), 264--278.
\bibitem{Lv} G. L\"{u}, {\it On Romanoff's constant and its generalized problem}, Adv. Math., (China) {\bf36} (2007), 94--100.
\bibitem{Ma} J. Maynard, {\it Small gaps between primes}, Ann. of Math., {\bf 181} (2015), 383--413.
\bibitem{Ma2} J. Maynard, {\it Dense clusters of primes in subsets}, Compos. Math., {\bf152} (2016), 1517--1554.
\bibitem{Mo} H. L. Montgomery, {\it A note on the large sieve}, J. London Math. Soc., {\bf43} (1968), 93--98.
\bibitem{Na} W. Narkiewicz, {\it On a conjecture of Erd\H{o}s}, Colloq. Math., {\bf 37} (1977), 313--315.
\bibitem{de1} A. de Polignac, {\it Six propositions arithmologiques d\'{e}duites du crible d'Eratosth\'{e}ne}, Nouv. Ann. Math., {\bf8} (1849), 423--429.
\bibitem{Pi} J. Pintz, {\it A note on Romanoff's constant}, Acta Math. Hung., {\bf112} (2006), 1--14.
\bibitem{de2} A. de Polignac, {\it Recherches nouvelles sur ies nombres primiers}. C. R. Acad. Sci. Paris, {\bf 29} (1849), 738--739.
\bibitem{Ta} D. H. J. Polymath, {\it Variants of the Selberg sieve, and bounded intervals containing many primes}, Res. Math. Sci.,
{\bf 1} (2014), 1--83.
\bibitem{PSh} I. I. Pyatecki\v{i}--\v{S}apiro, {\it On the distribution of prime numbers in sequences of the form $\left[f(n)\right]$,} Mat. Sbornik N.S. {\bf 75} (1953), 559--566.
\bibitem{Ro} N. P. Romanoff, {\it \"Uber einige S\"atze der additiven
Zahlentheorie}, Math. Ann. 57 (1934), 668--678.
\bibitem{Va} R. C. Vaughan, {\it Some applications of Montgomery's sieve}, J. Number Theory, {\bf 5} (1973), 64--79.
\end{thebibliography}
\end{document}